\documentclass[final,reqno,twoside,a4paper,11pt]{amsart}
\usepackage{mltools}
\usepackage{mlmath62,mlthm10-REAR}
\usepackage{upref,amsmath,amssymb,amsthm,mathrsfs}
\usepackage{tikz,graphics,latexsym}

\usepackage[colorlinks=true,linkcolor=blue,citecolor=blue]{hyperref}



\linespread{1.05}
\usepackage[scaled]{helvet} 
\usepackage{courier} 
\usepackage[mathbf]{euler}
\usepackage{mllocal} 

\usepackage{pgfplots}

\setcounter{tocdepth}{1}
\numberwithin{equation}{section}

\begin{document}

\date{\today}

\title[Brasselet number and Newton polygons]{Brasselet number and Newton polygons}

\author{Tha\'is M. Dalbelo}
\address{Department of Mathematics, 
	Universidade Federal de S\~ao Carlos (UFSCar),
	Brazil}
\email{thaisdalbelo@dm.ufscar.br, thaisdalbelo@gmail.com}

\author{Luiz Hartmann}
\address{Department of Mathematics, 
	Universidade Federal de S\~ao Carlos (UFSCar),
	Brazil}
\email{hartmann@dm.ufscar.br}
\urladdr{http://www.dm.ufscar.br/profs/hartmann}

\thanks{Luiz Hartmann is Partially support by FAPESP: 2016/16949-8 and both 
authors
are partially supported by CAPES/PVE:88881.068165/2014-01 }

\subjclass[2010]{Primary: 14M25,55S35; Secondary: 14B05, 32S05; 58K45}
\keywords{Toric actions, Brasselet number, Euler obstruction of a function}

\begin{abstract}
	We present a formula to compute the Brasselet number of $f:(Y,0)\to (\C, 
	0)$ where $Y\subset X$ is a non-degenerate complete intersection in a 
	toric variety $X$. As applications we establish 
	several results concerning invariance of the Brasselet number for 
	families of non-degenerate complete intersections. 
	Moreover, when $(X,0) = (\mathbb{C}^n,0)$ we derive sufficient conditions 
	to obtain the invariance of the Euler obstruction for families 	of 
	complete intersections with an isolated singularity which are contained in 
	$X$.  		 
\end{abstract}

\maketitle

\tableofcontents

\section{Introduction}
Given a germ of an analytic function 
$f:(\mathbb{C}^{n},0) \to (\mathbb{C},0)$ with an isolated critical point at 
the 
origin, an important invariant of this germ is its Milnor number \cite{M1}, 
denoted by $\mu(f)$. The Milnor number is considered as a central invariant, 
since 
it provides algebraic, topological and geometric information about the germ 
$f$. For instance, the Milnor number coincides with the number of Morse points 
of a morsefication of $f$.

\medskip

Initially the Milnor number was associated to germs of analytic functions $f 
: (\mathbb{C}^n, 0) \to (\mathbb{C}, 0)$ with an isolated critical point, and 
consequently it was used to study isolated hypersurface singularities. However
this invariant is well defined in many others contexts, for example curves
\cite{BuG}, isolated complete intersection singularities (ICIS) \cite{Hamm}, 
and determinantal varieties of codimension two \cite{MC}, to name just a few.

\medskip

The local Euler obstruction was defined by MacPherson in \cite{Mac} for 
the construction
of characteristic classes of singular complex algebraic varieties. 
Thereafter, it has been
deeply investigated by many authors such as Brasselet and Schwartz \cite{BS}, 
Dutertre \cite{N}, Gaffney, Grulha and Ruas \cite{GGR}, Gon\-za\-lez-Sprinberg 
\cite{Gonzalez}, L\^{e} and Teissier \cite{LT},  Matsui and Takeuchi \cite{MT2}, among others.
We denote by $(X,0)$ a germ of an analytic singular space embedded in 
$\mathbb{C}^n$ and by $f:(X,0) \to (\mathbb{C},0)$ a germ of an analytic 
function 
with an isolated critical point at the origin. Brasselet, 
Massey, Parameswaran and Seade \cite{BMPS} introduced an invariant associated 
to $f$ 
called the Euler obstruction of $f$  and denoted by ${\rm 
Eu}_{f,X}(0)$. Roughly speaking, ${\rm Eu}_{f,X}(0)$ is the obstruction to 
extending a lifting of the conjugate of the gradient vector field of $f$ as a 
section of the Nash bundle of $(X , 0)$. This invariant is closed related 
with the local Euler obstruction of $X$, what explains its name.  

\medskip

An important consequence of the definition given by MacPherson, is that the local Euler obstruction is a constructible function,    
which means that, it is constant along the strata of a Whitney stratification 
of $X$. This is essentially a consequence of the topological            
triviality of $X$ on the Whitney strata. As a consequence, the local Euler 
obstruction 
does not depend on the Whitney stratification of $X$. The following 
Lefschetz-type formula was proved by 
Brasselet, L\^e and
Seade \cite{BLS}.



\begin{theorem}\label{BLS}
	Let $(X,0) \subset (\mathbb{C}^n,0)$ be an equidimensional complex analytic singularity
	germ with a Whitney stratification $\{V_{i}\}$, then
	given a generic linear form $L$, there exists $\varepsilon_0$
	such that for any $\varepsilon$ with $0<\varepsilon<
	\varepsilon_0$, we have
	\begin{equation*}
	{\rm Eu}_X(0)=\sum_{i}\chi \big(V_i\cap B_\varepsilon\cap 
	L^{-1}(\delta) \big) \cdot
	{\rm Eu}_{X}(V_i),
	\end{equation*}
	where $\chi$ is the Euler-Poincar\'e
	characteristic, $B_\varepsilon:=B_\varepsilon(0)$ is the ball 
	with center the origin and radius $\varepsilon$, ${\rm Eu}_{X}(V_i)$ is the 
	value of the local Euler
	obstruction of $X$ at any point of the stratum $V_i$,
 and $0 < \vert 
	\delta \vert \ll \varepsilon \ll 1$.
\end{theorem}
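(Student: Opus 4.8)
The plan is to translate the identity into a computation of local indices of a stratified vector field in the Nash bundle of $X$, the non-trivial weights being supplied by Morse theory on a generic hyperplane slice. Write $d=\dim_{\mathbb{C}}X$, let $\nu\colon\widetilde X\to X$ be the Nash modification and $\widetilde T\to\widetilde X$ the Nash bundle; here $\widetilde X$ is a pure $d$-dimensional complex analytic space, and ${\rm Eu}_X(0)$ is by definition the obstruction, living in $H^{2d}\!\big(\nu^{-1}(X\cap B_\varepsilon),\,\nu^{-1}(X\cap S_\varepsilon);\mathbb{Z}\big)\cong\mathbb{Z}$ (with $S_\varepsilon=\partial B_\varepsilon$), to extending over all of $\nu^{-1}(X\cap B_\varepsilon)$, as a nowhere-zero section of $\widetilde T$, the canonical lift of a stratified vector field that is radial (outward pointing, transverse to all strata) on $X\cap S_\varepsilon$. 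The key mechanism is that this obstruction is computed as a sum of local contributions: if $\widehat v$ is \emph{any} stratified vector field on $X\cap B_\varepsilon$ which is radial on $X\cap S_\varepsilon$ and has only isolated zeros $p_1,\dots,p_k$, then
\[
{\rm Eu}_X(0)=\sum_{j=1}^{k}{\rm Ind}_{\widetilde T}(\widehat v,p_j),
\]
where ${\rm Ind}_{\widetilde T}(\widehat v,p_j)$ is the degree, on a small sphere about $\nu^{-1}(p_j)$, of the lifted section. The argument then consists in producing a $\widehat v$ tuned to $L$ whose zeros are organized stratum by stratum along the generic slice, and in evaluating the corresponding indices.

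First I would fix $L$ so that the hyperplane $H:=L^{-1}(0)$ is transverse to every stratum $V_i\neq\{0\}$ on a punctured neighborhood of $0$. Standard Milnor-type arguments then supply, for $0<\varepsilon\ll 1$ and a small regular value $0<|\delta|\ll\varepsilon$: (i) the slice $X_\delta:=X\cap L^{-1}(\delta)\cap B_\varepsilon$ is Whitney-stratified by the traces $\{V_i\cap L^{-1}(\delta)\}$ and has complex dimension $d-1$; (ii) by Thom's first isotopy lemma the map $L\colon X\cap\overline{B_\varepsilon}\cap\{0<|L|\le\delta\}\to\{0<|w|\le\delta\}$ is a locally trivial stratified fibration over the punctured disk, so that the numbers $\chi\big(V_i\cap L^{-1}(w)\cap B_\varepsilon\big)$ do not depend on the small regular value $w$; (iii) $L|_X$ has no stratified critical point in $(X\cap B_\varepsilon)\setminus\{0\}$. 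These are precisely the features of a generic linear form that the construction below needs.

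Next, for a small $\eta$ with $|\delta|\ll\eta\ll\varepsilon$, I would build a stratified vector field $\widehat v$ on $X\cap B_\varepsilon$ that equals a fixed radial stratified field on $X\cap\overline{B_\varepsilon}\cap\{|L|\ge\eta\}$ (zero-free there, since a radial field vanishes only at $0\in L^{-1}(0)$), and that on $X\cap\overline{B_\varepsilon}\cap\{|L|\le\eta\}$ is a stratified gradient-like field for $\|z\|^2$ \emph{along} the slices $\{L=w\}$, corrected by a term pushing toward the slice $L^{-1}(\delta)$ and away from $H$. Using (i)--(iii) one arranges that the zeros of $\widehat v$ in $X\cap B_\varepsilon$ are the origin together with finitely many non-degenerate ones on $X_\delta$, and that the origin contributes nothing to the sum above (morally because the Euler obstruction of a generic linear form vanishes). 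At a zero $p\in V_i\cap L^{-1}(\delta)\cap B_\varepsilon$, the Whitney conditions provide a local product decomposition of $X$ along $V_i$ under which $\widehat v$ splits as a radial field on a normal slice $(X',p)$ realizing the transversal singularity type of $V_i$, plus a Morse vector field for $\|z\|^2$ restricted to $V_i\cap L^{-1}(\delta)\cap B_\varepsilon$; correspondingly $\widetilde T$ decomposes near $\nu^{-1}(p)$ as $\widetilde T_{X'}$ plus a trivial summand, so that ${\rm Ind}_{\widetilde T}(\widehat v,p)={\rm Eu}_{X'}(p)\cdot(\text{Morse sign at }p)={\rm Eu}_X(V_i)\cdot(\text{Morse sign at }p)$. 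Poincar\'e--Hopf on $V_i\cap L^{-1}(\delta)\cap B_\varepsilon$ then turns the sum of the Morse signs over its zeros into $\chi\big(V_i\cap L^{-1}(\delta)\cap B_\varepsilon\big)$; summing over the strata and feeding this into the displayed identity gives the theorem. In the simplest case, when $p$ lies in the smooth top stratum, $\widetilde T$ is the ordinary tangent bundle, ${\rm Eu}_X\equiv 1$, and the index is the classical Poincar\'e--Hopf index, so the count is literally an Euler characteristic.

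The main obstacle is twofold. First, building $\widehat v$ as a genuinely stratified field which is at the same time gradient-like along the slices and radial near the boundary, which glues across $\{|L|=\eta\}$ without creating spurious zeros, and whose behavior near $H$ and near the origin is under control, requires a partition-of-unity interpolation compatible with the stratification together with some bookkeeping. Second, and deeper, is the proportionality ${\rm Ind}_{\widetilde T}(\widehat v,p)={\rm Eu}_X(V_i)\cdot(\text{Morse sign})$: it rests on the compatibility of the Nash bundle of $X$ with the local product structure along a stratum and on the fact that the radial directions transverse to the slice contribute index $1$, which I would prove by a homotopy-of-sections argument together with the topological triviality of $X$ along $V_i$ guaranteed by the Whitney conditions. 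Once these points are secured, the use of Poincar\'e--Hopf and the independence of $\chi\big(V_i\cap L^{-1}(\delta)\cap B_\varepsilon\big)$ of the small regular value $\delta$ (from (ii)) are routine.
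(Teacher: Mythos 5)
The paper does not prove this statement: it is quoted verbatim from Brasselet--L\^e--Seade \cite{BLS} as background, so there is no in-paper argument to compare yours against. What you have written is, in outline, a faithful reconstruction of the original proof in \cite{BLS}: compute ${\rm Eu}_X(0)$ as the total obstruction to extending the canonical lift of a stratified vector field that is radial on $X\cap S_\varepsilon$, replace that field inside the ball by one adapted to the generic linear form $L$ so that its zeros sit on the slice $X\cap L^{-1}(\delta)\cap B_\varepsilon$, evaluate each local contribution by the proportionality ${\rm Ind}_{\widetilde T}(\widehat v,p)={\rm Eu}_X(V_i)\cdot({\rm Poincar\acute{e}\text{--}Hopf\ index\ on\ }V_i)$, and convert the stratum-wise sums into Euler characteristics via Poincar\'e--Hopf. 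Your identification of the two genuine difficulties is accurate, and the second one is indeed the heart of the matter: the proportionality statement is exactly the Brasselet--Schwartz proportionality theorem for vector fields, a nontrivial result about the behaviour of the Nash bundle over a tubular neighbourhood of a stratum; a ``homotopy-of-sections argument together with topological triviality along $V_i$'' is the right idea but is where essentially all the work lives, so as written your proposal is a correct strategy rather than a complete proof. One small caution: you should arrange the adapted field to be nonvanishing at the origin outright (which the genericity of $L$, i.e.\ the absence of stratified critical points of $L$ on $(X\cap B_\varepsilon)\setminus\{0\}$, permits), rather than argue that the origin ``contributes nothing because the Euler obstruction of a generic linear form vanishes'' --- that vanishing is equivalent to the theorem being proved (compare Theorem~\ref{BMPS}), so invoking it would be circular.
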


The previous theorem says that the local Euler obstruction, as a 
constructible 
function
on $X$, satisfies the Euler condition relatively to a generic linear function.

\medskip

For the Euler obstruction of an analytic function $f:(X,0) \to 
(\mathbb{C},0)$ with an isolated critical point at the origin, there is also 
a 
Lefschetz-type formula. This formula was proved in \cite{BMPS}. The purpose 
of the authors was to understand what prevents the
local Euler obstruction from satisfying the local Euler condition with respect to
functions which are singular at the origin.
\begin{theorem}\label{BMPS}
	Let $(X,0) \subset (\mathbb{C}^n,0)$ be an equidimensional complex analytic singularity
	germ with a Whitney stratification $\{V_{i}\}$, and let $ f : (X,0)  \to 
	(\mathbb C,0)$ be a function with an isolated
	singularity at $0$. Then,
	$${\rm Eu}_{f,X}(0)={\rm Eu}_X(0) \,- \,\left(\sum_{i} \chi 
	\big(V_i\cap B_\varepsilon\cap
	f^{-1}(\delta) \big) \cdot  {\rm Eu}_X(V_i) \right),$$
	where $0 < \vert \delta \vert \ll \varepsilon \ll 1$.
\end{theorem}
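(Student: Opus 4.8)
The plan is to compute both invariants by obstruction theory on the Nash modification, reducing the statement to a comparison of two nowhere-zero vector fields on the link. Fix $0<\delta\ll\varepsilon\ll 1$, let $S_\varepsilon$ be the boundary sphere of $B_\varepsilon$, and set $X_\varepsilon=X\cap\overline{B_\varepsilon}$, $K=X\cap S_\varepsilon$; let $\nu\colon\widetilde X\to X$ be the Nash modification, with Nash bundle $\widetilde T\to\widetilde X$ of complex rank $m=\dim_{\mathbb{C}}X$, equipped with a Hermitian metric. Recall that if $\zeta$ is the radial vector field on $K$ and $\widetilde\zeta$ a lift of it to a nowhere-zero section of $\widetilde T$ over $\nu^{-1}(K)$, then ${\rm Eu}_X(0)$ is the primary obstruction to extending $\widetilde\zeta$ to a nowhere-zero section of $\widetilde T$ over $\nu^{-1}(X_\varepsilon)$, evaluated on the relevant fundamental class of $\bigl(\nu^{-1}(X_\varepsilon),\nu^{-1}(K)\bigr)$. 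Since $f$ has an isolated stratified singularity at $0$, the conjugate gradient $\overline{\nabla f}$ is a nowhere-zero stratified vector field on $X_\varepsilon\setminus\{0\}$, and by definition ${\rm Eu}_{f,X}(0)$ is the analogous obstruction attached to a lift $\widetilde{\overline{\nabla f}}$ of it over $\nu^{-1}(K)$. Thus the theorem is equivalent to the identity
\[
{\rm Eu}_X(0)-{\rm Eu}_{f,X}(0)=\sum_i\chi\bigl(V_i\cap B_\varepsilon\cap f^{-1}(\delta)\bigr)\cdot{\rm Eu}_X(V_i).
\]

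Next I would express the left-hand side as a difference class. The two sections $\widetilde\zeta$ and $\widetilde{\overline{\nabla f}}$ of $\widetilde T$ over $\nu^{-1}(K)$ have obstructions differing by $\langle d(\widetilde\zeta,\widetilde{\overline{\nabla f}}),[\nu^{-1}(K)]\rangle$, where $d(\cdot,\cdot)\in H^{2m-1}(\nu^{-1}(K);\mathbb{Z})$ is the difference class, computable as the total index of a generic stratified homotopy between the two fields. The key localization is that this class is concentrated over the Milnor tube: outside $N:=X_\varepsilon\cap f^{-1}(\overline{D_\delta})$ one has $|f|\ge\delta$, $f$ has no stratified critical points, and $\overline{\nabla f}$ and $\zeta$ can be joined by a nowhere-zero stratified homotopy (combining the radial flow with the flow of $\operatorname{Re}f$), so $\nu^{-1}(K\setminus N)$ contributes nothing. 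Here one invokes a Thom--Mather controlled tube system for $\{V_i\}$ compatible with $f$, which is exactly what makes $\overline{\nabla f}$ a bona fide stratified vector field and allows the homotopy to be taken tangent to each stratum. Consequently the difference is supported on $\nu^{-1}\bigl(X_\varepsilon\cap S_\varepsilon\cap f^{-1}(\partial D_\delta)\bigr)$.

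It then remains to evaluate this localized contribution stratum by stratum. Over $\nu^{-1}$ of an open part of a single stratum $V_i$, the Nash bundle is related to $X$ by the proportionality relation that defines ${\rm Eu}_X(V_i)$, so the contribution of $V_i$ is ${\rm Eu}_X(V_i)$ times the Poincar\'e--Hopf index, relative to the boundary, of the vector field on $V_i\cap X_\varepsilon$ equal to $\zeta$ outside $N$ and to $\overline{\nabla(f|_{V_i})}$ inside $N$; a Morse-theoretic count — the same one underlying Theorem~\ref{BLS}, now with the Milnor fibration of $f|_{V_i}$ in place of a generic pencil of hyperplanes — identifies this index with $\chi\bigl(V_i\cap B_\varepsilon\cap f^{-1}(\delta)\bigr)$. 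Summing over $i$ yields the formula. As a consistency check, for $f=L$ a generic linear form $\overline{\nabla L}$ is (after projection to the strata) a field with ${\rm Eu}_{L,X}(0)=0$ — the local Euler condition recalled after Theorem~\ref{BLS} — and the identity reduces precisely to Theorem~\ref{BLS}; one could alternatively try to deduce the general case from this one by interpolation, but the direct obstruction argument above is cleaner.

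The main obstacle is the interplay of the localization of the second step with the index computation of the third: realizing $\overline{\nabla f}$ as a stratified vector field with controlled behaviour along the Whitney strata and along $\partial N$, and then carrying out the relative Poincar\'e--Hopf count on each stratum — whose closure may be singular, so that lower strata meeting $\overline{V_i}$ must be accounted for — both rest on the Thom--Mather isotopy machinery together with a careful stratified Morse-theory argument. Once these geometric inputs are in place, the remainder of the proof is formal obstruction theory.
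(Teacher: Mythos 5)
The paper does not actually prove Theorem~\ref{BMPS}: it is quoted from \cite{BMPS}, so there is no internal proof to compare against. Your proposal is, in outline, the strategy of the original proof, which in turn adapts the proof of Theorem~\ref{BLS} in \cite{BLS}: interpret ${\rm Eu}_X(0)$ and ${\rm Eu}_{f,X}(0)$ as obstructions to extending two sections of the Nash bundle, observe that their difference is measured by a difference class on the link, localize that class in the Milnor tube of $f$ using that $0$ is the only stratified critical point, and convert the localized index on each stratum into $\chi\bigl(V_i\cap B_\varepsilon\cap f^{-1}(\delta)\bigr)\cdot{\rm Eu}_X(V_i)$ via the Brasselet--Schwartz proportionality theorem. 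So the route is the correct and standard one.

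As written, however, it is a strategy rather than a proof, and the two steps you defer are exactly where the content lies. (i) The assertion that $\widetilde\zeta$ and $\widetilde{\overline{\nabla f}}$ are joined by a nowhere-zero stratified homotopy away from the tube is not automatic for a non-generic $f$: one must first produce a genuine controlled (Thom--Mather) stratified lift of $\overline{\nabla f}$ and then argue that, where $|f|\ge\delta$, it can be deformed to the radial field without zeros; moreover the localization is over $\nu^{-1}\bigl(K\cap f^{-1}(\overline{D_\delta})\bigr)$, the whole tube part of the link, not merely over the preimage of $\partial D_\delta$ as you state. (ii) The identification of the relative Poincar\'e--Hopf index on $V_i$ with $\chi\bigl(V_i\cap B_\varepsilon\cap f^{-1}(\delta)\bigr)$ requires actually constructing a stratified vector field on the Milnor tube that restricts to the fibre $f^{-1}(\delta)$ with total index equal to its Euler characteristic on each stratum and glues both to $\overline{\nabla f}$ on the tube boundary and to the radial field near $S_\varepsilon$; this construction, together with the proportionality theorem, is the technical heart of \cite{BLS} and is what \cite{BMPS} imports. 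A complete write-up would have to carry out (i)--(ii) or cite them explicitly; everything else in your argument is indeed formal obstruction theory.
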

\noindent The last equation presents the relation between the local 
Euler 
obstruction of $X$ and 
the Euler obstruction of $f$.

\medskip

Seade, Tib{\u a}r and Verjovsky continued the study of the properties of  
${\rm Eu}_{f,X}(0)$ in \cite{STV}. The authors proved that the Euler obstruction of $f$ is
closely related to the number of Morse points of a morsefication of $f$, as 
follows.

\begin{proposition}
	Let $(X,0)$ be an equidimensional complex analytic singularity
	germ of dimension $d$ and $f: (X,0) \to (\mathbb{C},0)$ a germ of an analytic function 
	with an isolated critical point at the origin. Then 
	$$
	{\rm Eu}_{f,X}(0) = (-1)^{d} n_{\rm reg},
	$$ 
	where $n_{\rm reg}$ is the number of Morse points 
	in the regular part of $X$ appearing in a stratified morsefication of $f$.
\end{proposition}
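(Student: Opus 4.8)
The plan is to express both sides of the claimed identity through the Lefschetz-type formulas of Theorem~\ref{BLS} and Theorem~\ref{BMPS}, and then interpret the resulting difference of Euler characteristics as a count of Morse points produced by a stratified morsefication. First I would subtract the formula of Theorem~\ref{BMPS} from that of Theorem~\ref{BLS} to obtain
\begin{equation*}
{\rm Eu}_{f,X}(0) = \sum_{i}\Big(\chi\big(V_i\cap B_\varepsilon\cap L^{-1}(\delta)\big) - \chi\big(V_i\cap B_\varepsilon\cap f^{-1}(\delta)\big)\Big)\cdot {\rm Eu}_X(V_i),
\end{equation*}
so the whole computation reduces to comparing, stratum by stratum, the generic linear slice with the Milnor fibre of $f$. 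On the singular strata $V_i$ other than the regular part, the isolated-singularity hypothesis on $f$ forces $f|_{V_i}$ to be a generic linear form up to the relevant topological invariants, so those differences should vanish; only the top stratum $X_{\rm reg}$ survives, where ${\rm Eu}_X(X_{\rm reg})=1$.

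Next I would take a stratified morsefication $\tilde f$ of $f$, i.e.\ a small deformation of $f$ that is a stratified Morse function. Over the regular part the Milnor fibre of $f$ and the Milnor fibre of a generic linear form differ precisely by attaching cells corresponding to the Morse points of $\tilde f$ lying in $X_{\rm reg}$, each contributing $(-1)^{d}$ to the Euler characteristic (by the local Morse lemma in the complex-analytic setting, a complex Morse point of a function on a $d$-dimensional manifold changes the homotopy type by a cell of real dimension $d$). Hence
\begin{equation*}
\chi\big(X_{\rm reg}\cap B_\varepsilon\cap L^{-1}(\delta)\big) - \chi\big(X_{\rm reg}\cap B_\varepsilon\cap f^{-1}(\delta)\big) = (-1)^{d} n_{\rm reg},
\end{equation*}
and combining with the displayed reduction gives ${\rm Eu}_{f,X}(0) = (-1)^{d} n_{\rm reg}$ as desired. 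Alternatively, one can argue directly: the Euler obstruction of $f$ measures the defect in extending the lifted conjugate gradient as a section of the Nash bundle over $X_{\rm reg}$, and this defect localizes at the Morse points appearing in $\tilde f$, each contributing a sign $(-1)^{d}$; this is really the same computation packaged differently, and it is essentially the content of the argument of Seade, Tib\u ar and Verjovsky.

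The main obstacle I expect is justifying the vanishing of the contributions from the lower-dimensional strata $V_i$, i.e.\ that a function with an isolated singularity on $X$ behaves, on each positive-codimension stratum, like a generic linear form as far as the Euler characteristic of the local fibre is concerned. This requires invoking the isolated-singularity condition carefully (the critical locus of $f$ relative to the stratification meets $V_i$ only at $0$), together with a local topological triviality or conical-structure argument showing $\chi\big(V_i\cap B_\varepsilon\cap f^{-1}(\delta)\big) = \chi\big(V_i\cap B_\varepsilon\cap L^{-1}(\delta)\big)$ on those strata — one may need to appeal to the results of \cite{BMPS} or \cite{STV} directly here rather than reproving them. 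The remaining ingredient, the sign $(-1)^{d}$ per Morse point, is standard complex stratified Morse theory and should be quoted rather than redone.
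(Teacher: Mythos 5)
The paper does not prove this proposition---it is quoted verbatim from \cite{STV}---so your attempt has to be measured against the standard argument there. Your route has a genuine gap at its central step: the claim that on each positive-codimension stratum $V_i$ one has $\chi\big(V_i\cap B_\varepsilon\cap f^{-1}(\delta)\big)=\chi\big(V_i\cap B_\varepsilon\cap L^{-1}(\delta)\big)$ is false, and with it the reduction to the top stratum collapses. A stratified isolated critical point only forces $f|_{V_i}$ to be nonsingular on a punctured neighbourhood; it does not make $f|_{\overline{V_i}}$ behave like a generic linear form. Concretely, take $X=\{xy=0\}\subset\mathbb{C}^3$ (two planes, $d=2$), stratified by $\{0\}$, $V_1=\{z\text{-axis}\}\setminus\{0\}$, $V_2=X_{\mathrm{reg}}$, and $f=x+y+z^2$. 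Then $f^{-1}(\delta)$ meets $V_1$ in two points while $L^{-1}(\delta)$ meets it in one, so the lower-stratum difference is $-1\neq 0$; moreover the top-stratum difference is $0-(-2)=2$, whereas a morsefication of $f$ has no Morse point on $X_{\mathrm{reg}}$ at all (its only critical point slides onto $V_1$), so $(-1)^d n_{\mathrm{reg}}=0$. Only the sum weighted by ${\rm Eu}_X(V_i)$, namely $(-1)\cdot 2+2\cdot 1=0$, gives the right answer. The same example defeats your cell-attachment count on $X_{\mathrm{reg}}$: Morse points created on lower strata change the Euler characteristic of the top-dimensional part of the fibre through their normal Morse data, so the regular-part Morse points do not account for the whole difference.

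The ingredient you are missing is the one that makes the proof in \cite{STV} (and \cite{BMPS}) work: one does not compare Euler characteristics of slices stratum by stratum, but localizes the obstruction class defining ${\rm Eu}_{f,X}(0)$ at the zeros of the lifted conjugate gradient of a morsefication $\tilde f$. The two key local computations are (i) at a stratified Morse point lying on a stratum of dimension $<d$, the local index of the lifted vector field in the Nash bundle vanishes---this uses the Whitney condition $(a)$ and the geometry of the Nash modification, and has no analogue in terms of $\chi$ of slices; and (ii) at a Morse point on $X_{\mathrm{reg}}$ the local index is the Poincar\'e--Hopf index of $\overline{\nabla}\tilde f$ on a smooth $d$-fold, which is $(-1)^d$ (degree of conjugation times the Milnor number $1$), not a cell-attachment count. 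Your closing ``alternative'' sketch points in this direction, but it is not the same computation repackaged, and it is precisely item (i)---why lower-strata Morse points contribute zero---that carries the content of the theorem and is absent from your argument.
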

\noindent Therefore, the Euler obstruction of $f$ is the number of Morse points 
of a morsefication of $f$ on the regular part of $X$, up to the sign. Hence 
this 
 invariant can be seen as a generalization of the Milnor number of $f$. 
 
\medskip 
 
Another invariant associated with a germ of an analytic function \linebreak 
$f:(X,0) \to (\mathbb{C},0)$ is the Brasselet number introduced by Dutertre 
and Grulha in \cite{NN}. We will denote this number by ${\rm B}_{f,X}(0)$. If 
$f$ 
has an isolated critical point, then the Brasselet number satisfies the 
equality 
\begin{equation*}
{\rm B}_{f,X}(0) = {\rm Eu}_{X}(0) - {\rm Eu}_{f,X}(0).
\end{equation*}
\noindent If $f$ is linear and generic, then ${\rm B}_{f,X}(0)={\rm 
Eu}_X(0)$, hence it can be viewed as a 
generalization of the local Euler obstruction. Moreover, 
even if $f$ has a non-isolated singularity it provides interesting results. 
For example, the Brasselet number has a L\^e-Greuel type formula 
(see \cite[Theorem 4.4]{NN} or Theorem \ref{Le Greuel} below), \ie 
the difference of the Brasselet numbers ${\rm B}_{f,X}(0)$ and ${\rm 
B}_{f,X^{g}}(0)$ is measure by the number of Morse critical points on the top 
stratum of the Milnor fiber of $f$ appearing in a morsefication of $g$, where 
$g: (X,0) \to (\mathbb{C},0)$ is a prepolar function (see Definition 
\ref{prepolar}) and $X^{g}=X \cap g^{-1}(0)$.

\medskip

Although they are important, the invariants mentioned above are not easily 
computed using their definition. In the literature there are formulas which 
make the computation easier, see \cite{BLS, BMPS, LT, N}. Some authors worked 
on 
more specific situations. In the special case of toric surfaces,
an interesting formula for the local Euler obstruction was proved by 
Gonzalez-Sprinberg \cite{Gonzalez}. This formula was generalized by Matsui 
and Takeuchi \cite{MT2} for normal toric varieties of any dimension. 

\medskip

Toric varieties are particularly interesting objects, since they have a 
strong relation with elementary convex geometry. On these varieties we have 
an action of the algebraic torus $(\mathbb{C}^{*})^n$ that induces a finite 
decomposition of the variety into orbits,
all of which are homeomorphic to a torus.

\medskip

In \cite{Varchenko}, Varchenko described the topology of the Milnor fiber of a 
function $f:(\mathbb{C}^n,0) \to (\mathbb{C},0)$ using the geometry of the 
Newton polygon of $f$, and consequently, the Milnor number can be expressed 
by volumes of polytopes related to the Newton polygon of $f$. In his proof, 
he constructed a toric modification of $\mathbb{C}^n$ on which the pull-back 
of $f$ defines a hypersurface with only normal crossing singularities. While 
$\mathbb{C}^n$ is a very special smooth toric variety, it seems natural to 
generalize his formula to Milnor fibers over general singular toric 
varieties. This was done by Matsui and Takeuchi in \cite{MT1}.

\medskip

We use \cite{MT1} to establish several combinatorial formulas for the 
computation of the Brasselet number of $f:(Y,0)\to (\C, 0)$ where $Y\subset X$ 
is a non-degenerate complete intersection in a toric variety $X$. These 
formulas will be given in terms of volumes of Newton polygons associated to 
$f$.

\medskip

This paper is organized as follows. In Section $2$, we present some background 
material
concerning the Brasselet number and toric varieties, which will be used in the 
entire work. In Section $3$, we compute the Brasselet number of a polynomial 
function $f : (X, 0) \to (\mathbb{C}, 0)$, where $X \subset \mathbb{C}^n$ is a 
toric variety. Moreover, we compute this invariant for functions defined on 
$X^{g} = X\cap g^{-1}(0)$, where $g: X \to \mathbb{C}^k$ is a non-degenerate 
complete intersection. As a consequence, assuming that $g$ has an isolated 
critical point on $X$ and on $X^f$, we also obtain a formula for the number of 
stratified Morse critical points on the top stratum of the Milnor fiber of $f$ 
appearing in a morsefication of $g: X \cap f^{-1}(\delta) \cap 
B_{\varepsilon} \to \mathbb{C}$. As applications we establish several 
results concerning constancy of these invariants. In Section $4$, we 
consider the case where $(X,0) = (\mathbb{C}^n,0)$ and we derive sufficient 
conditions to obtain the constancy of the Euler obstruction for families of 
complete intersections with an isolated singularity which are contained on $X$. 
We 
use this result to study the invariance of the Bruce-Roberts' Milnor number 
for 
families of functions defined on hypersurfaces. In Section $5$, we work in the 
case of surfaces,\ie in the case where $X$ is a $2$-dimensional
toric variety. In this situation, we present a 
characterization of a polynomial function $g: X \to \mathbb{C}$ which 
has a stratified isolated singularity at the origin. We use  
this characterization to present some examples for a class of toric surface 
that is also determinantal. In Section \ref{Section-IndicesVectorFields}, we 
study the GSV-index on non-degenerated complete intersection in toric varieties.

\section{Generalities: stratifications, Brasselet number and toric varieties}

For the convenience of the reader and to fix the notation we present some 
general facts  in order
to establish our results.

\subsection{Stratifications and Brasselet number}

In order to introduce the definition and the properties of the Brasselet 
number, 
we need some notions about stratifications. For more 
details, we refer to Massey \cite{Massey,Massey1}. Let $A \subset \C^n$, 
$B\subset \C^m$ and $f 
: A \to 
B$ be a function with $n, m\in \N$. We will 
fix the notation, $A^{f}:=A\cap f^{-1}(0)$. 

Consider $X\subset \C^n$ a 
reduced complex analytic set of dimension $d$ which is included in an open 
set $U$. Let $F : U \to \mathbb{C}$ be a holomorphic function and $f : X \to 
\mathbb{C}$ be the restriction 
of $F$ to $X$, \ie $f:=F|_X$.

\begin{definition}\label{Def-GoodStratification}
	A good stratification of $X$ relative to $f$ is a stratification 
	$\mathcal{V}$ of $X$ which is adapted
	to $X^{f}$, such that $\left\{V_i \in 
	\mathcal{V}; \ \ V_i \not\subset X^{f}\right\}$ is a Whitney 
	stratification
	of $X \setminus X^{f}$, and for any pair of strata 
	$(V_{\alpha},V_{\beta})$ such that $V_{\alpha} \not\subset X^{f}$ and 
	$V_{\beta} \subset X^{f}$, the $(a_f)$-Thom condition is satisfied. We 
	call the strata included in $X^{f}$ the good strata.
\end{definition}

By \cite{Le2}, given a stratification $\mathcal{S}$ of $X$ one can refine 
$\mathcal{S}$ to obtain a Whitney stratification $\mathcal{V}$ of $X$ which 
is adapted to $X^{f}$. By \cite{Massey1} the refinement 
$\mathcal{V}$ satisfies the  $(a_f)$-Thom condition, \ie good 
stratifications always exist.

\begin{definition}
Consider $X$ and $f$ as before. Let $\mathcal{V}=\{V_i\}$ be a stratification 
of $X$. The critical locus of $f$ 
relative to $\mathcal{V}$, denoted by $\Sigma_{\mathcal{V}} f$, is the union 
of the 
critical locus of
$f$ restricted to each of the strata,\ie $\Sigma_{\mathcal{V}} f = \bigcup_{i} \Sigma(f|_{V_i})$.
\end{definition}

A {\it {critical point of $f$ relative to $\mathcal{V}$}} is a point $p \in 
\Sigma_{\mathcal{V}} f$. If the stratification $\mathcal{V}$ is clear, we
will refer to the elements of $\Sigma_{\mathcal{V}} f$ simply as stratified 
critical points of $f$. If $p$ is an isolated point of $\Sigma_{\mathcal{V}} 
f$, we call $p$ a {\it{ stratified isolated critical point of $f$}} (with
respect to $\mathcal{V}$). If $\mathcal{V}$ is a Whitney stratification of 
$X$ and $f : X \to \mathbb{C}$ has a stratified isolated
critical point at the origin, then 
\begin{equation*}
\left\{V_{\alpha} \setminus X^{f}, 
V_{\alpha} \cap X^{f} \setminus \left\{0 \right\}, \left\{ 0\right\}; \ \ 
V_{\alpha} \in \mathcal{V} \right\},
\end{equation*}
is a good stratification for $f$. We call it the {\it 
good 
stratification induced by $f$}.

\begin{definition}\label{BrasseletN}
	Suppose that $X$ is equidimensional. Let $\mathcal{V} = \left\{V_i 
	\right\}_{i=0}^{q}$ be a good stratification of $X$ relative
	to $f$. The Brasselet number is defined by 
	\begin{equation*}
	{\rm B}_{f,X}(0) := \sum_{i 
	=1}^q\chi \big(V_i\cap B_{\varepsilon}(0) \cap f^{-1}(\delta) \big) \cdot
	{\rm Eu}_{X}(V_i),
	\end{equation*}
	where $0< \left| \delta \right| \ll \varepsilon \ll 1$.
\end{definition}

If $f$ has a stratified isolated critical point at the origin and $X$ 
is equidimensional, Theorem \ref{BMPS} implies that
\begin{equation}\label{EqBrasEul}
{\rm B}_{f,X}(0) = {\rm Eu}_X(0) - {\rm Eu}_{f,X}(0).
\end{equation}
The Brasselet number has many interesting properties,\eg it satisfies
several
multiplicity formulas, which enable the authors to establish in \cite{NN} a relative version of the local
index formula and a Gauss-Bonnet formula for ${\rm B}_{f,X}(0)$. However, one 
of the most important properties 
of this invariant is the L\^e-Greuel type formula (Theorem \ref{Le 
Greuel}). To present this result we need 
to impose some conditions on the functions to ensure that $X^{g}$ meets 
$X^{f}$ in a nice way. So it is necessary to define:

\begin{definition}\label{prepolar}
	Let $\mathcal{V}$ be a good stratification of $X$ relative to $f$. We say 
	that $g : (X,0) \to (\mathbb{C}, 0)$
	is prepolar with respect to $\mathcal{V}$ at the origin if the origin is 
	an isolated critical point of $g$.
\end{definition}

The condition that $g$ is prepolar means that $g$ has an isolated critical 
point (in the stratified sense), both on $X$ and on $X^{f}$, and that $X^g$ 
intersects transversely each stratum of $\mathcal{V}$ in a neighborhood of 
the origin, except
perhaps at the origin itself. However, it is important to note that, while 
$X^g$ meets $X^f$ in a nice way, $X^f$ may have
arbitrarily bad singularities when restricted to $X^g$. The $(a_f)$-Thom 
condition in Definition 
\ref{BrasseletN} together with the hypothesis that $g$ is prepolar ensure 
that $g:X \cap f^{-1}(\delta) \cap B_{\varepsilon} \to \mathbb{C}$ has no 
critical points on $g^{-1}(0)$ \cite[Proposition $1.12$]{Massey}. 
Therefore, the number of stratified Morse critical points on the top 
stratum $V_q \cap f^{-1}(\delta)\cap B_{\varepsilon}(0)$, in a	morsefication 
of $g : X \cap f^{-1}(\delta) \cap B_{\varepsilon}(0) \to 
\mathbb{C}$, does not depend on the morsefication.

The next theorem shows that the Brasselet number satisfies a L\^e-Greuel 
type formula \cite[Theorem 4.4]{NN}. 

\begin{theorem}\label{Le Greuel}
	Suppose that $X$ is equidimensional and that $g$ is prepolar with respect 
	to $\mathcal{V}$ at the
	origin. Then,
	\[
	{\rm B}_{f,X}(0) - {\rm B}_{f,X^{g}}(0)  = (-1)^{d-1}n_q,
	\]
	where $n_q$ is the number of stratified Morse critical points on the top 
	stratum $V_q \cap f^{-1}(\delta)\cap B_{\varepsilon}(0)$ appearing in a 
	morsefication of $g : X \cap f^{-1}(\delta) \cap B_{\varepsilon}(0) \to 
	\mathbb{C}$, and $0 < \left|\delta\right| \ll \varepsilon \ll 1$. In particular, this number is independent on the morsefication.
\end{theorem}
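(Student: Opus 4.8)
The plan is to reduce the statement to an application of the additivity of the Euler characteristic together with a local Morse-theoretic analysis near each stratified Morse critical point of a morsefication. First I would fix a good stratification $\mathcal{V} = \{V_i\}_{i=0}^{q}$ of $X$ relative to $f$, and use the hypothesis that $g$ is prepolar to obtain, after refining if necessary, a good stratification of $X^g$ relative to $f$ whose strata are of the form $V_i \cap X^g$ (away from the origin). Here I would invoke \cite[Proposition $1.12$]{Massey}: the prepolarity of $g$ together with the $(a_f)$-Thom condition guarantees that $g : X \cap f^{-1}(\delta) \cap B_{\varepsilon} \to \mathbb{C}$ has no stratified critical points on $g^{-1}(0)$, so a generic (small) morsefication $\widetilde{g}$ of $g$ restricted to $X \cap f^{-1}(\delta) \cap B_{\varepsilon}$ has only nondegenerate stratified critical points, finitely many of them, all with distinct critical values, and none on $\widetilde{g}^{-1}(0)$; moreover $n_q$ of them lie on the top stratum $V_q \cap f^{-1}(\delta) \cap B_{\varepsilon}$.

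Next I would write, for each stratum $V_i$ with $V_i \not\subset X^f$,
\begin{equation*}
\chi\big(V_i \cap B_{\varepsilon} \cap f^{-1}(\delta)\big) - \chi\big(V_i \cap X^g \cap B_{\varepsilon} \cap f^{-1}(\delta)\big) = \chi\big((V_i \setminus X^g) \cap B_{\varepsilon} \cap f^{-1}(\delta)\big),
\end{equation*}
using additivity of $\chi$ for the decomposition of the constructible set $V_i \cap B_{\varepsilon} \cap f^{-1}(\delta)$ into $V_i \cap X^g$ and its complement. Then I would multiply by $\mathrm{Eu}_X(V_i)$ and sum over $i$, noting that $\mathrm{Eu}_{X^g}(V_i \cap X^g) = \mathrm{Eu}_X(V_i)$ by a known comparison of Euler obstructions for a generic hyperplane-type slice (this uses prepolarity/transversality of $X^g$ to the strata). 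This yields
\begin{equation*}
{\rm B}_{f,X}(0) - {\rm B}_{f,X^g}(0) = \sum_{i : V_i \not\subset X^f} \chi\big((V_i \setminus X^g)\cap B_{\varepsilon}\cap f^{-1}(\delta)\big)\cdot \mathrm{Eu}_X(V_i).
\end{equation*}
Since $V_i \setminus X^g = V_i \cap \{g \neq 0\}$ on the Milnor fiber $X \cap f^{-1}(\delta) \cap B_{\varepsilon}$, and $g$ is nowhere zero there only up to replacing $g$ by $\widetilde{g}$, I would pass to $\widetilde{g}$ and use that $\widetilde{g}$ has no zeros on the relevant set, so I can instead compute $\chi$ of the fibers of $\widetilde{g}$.

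The heart of the argument, and the main obstacle, is the local computation near the stratified Morse critical points. For a real number $t$ with $0 < |t| \ll 1$ I would compare $\chi$ of $V_i \cap \widetilde{g}^{-1}(t) \cap f^{-1}(\delta) \cap B_{\varepsilon}$ as $t$ crosses the critical values; the change is governed entirely by the local Morse data, which by the stratified Morse theory of Goresky–MacPherson contributes, at a nondegenerate critical point on a stratum of complex dimension $k$, a factor $(-1)^{k}$ (after accounting for complex versus real indices and the normal Morse data, which for complex analytic $f$ is trivial in the appropriate sense). Summing these contributions over all critical points, the points on strata $V_i$ with $i < q$ turn out to cancel against the corresponding terms coming from $X^{\widetilde{g}}$ — precisely because $\mathrm{Eu}_X(V_i)$ is weighted the same way on $X$ and on $X^g$ — leaving only the $n_q$ contributions from the top stratum $V_q$, each contributing $(-1)^{d-1}$ since $\dim(V_q \cap f^{-1}(\delta)) = d-1$. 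Tracking the signs and the correct complex-dimension bookkeeping, and verifying that the $(a_f)$-Thom condition is exactly what makes the wall-crossing contributions localize to the stratified Morse points (rather than leaking from the bad strata inside $X^f$), is where the real work lies; I expect to lean on \cite{Massey,Massey1} for the requisite stratified Morse-theoretic inputs and on Theorem \ref{BMPS} / equation \eqref{EqBrasEul} to reconcile the boundary case where $g$ happens to have an isolated critical point. Finally, independence of the morsefication follows because $n_q$ equals the difference ${\rm B}_{f,X}(0) - {\rm B}_{f,X^g}(0)$ up to the fixed sign $(-1)^{d-1}$, and the left-hand side is intrinsically defined.
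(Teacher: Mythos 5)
The paper does not actually prove this statement: Theorem \ref{Le Greuel} is quoted from Dutertre--Grulha \cite[Theorem 4.4]{NN} and used as a black box, so your proposal must be measured against that proof. Your reduction is fine up to and including the identity
\begin{equation*}
{\rm B}_{f,X}(0)-{\rm B}_{f,X^{g}}(0)=\sum_{i}\chi\big((V_i\setminus X^{g})\cap B_{\varepsilon}\cap f^{-1}(\delta)\big)\cdot{\rm Eu}_X(V_i),
\end{equation*}
which follows from additivity of $\chi$ together with Dubson's transversality statement ${\rm Eu}_{X^{g}}(V_i\cap X^{g})={\rm Eu}_X(V_i)$; both steps are legitimate under prepolarity.

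The genuine gap is in the localization to the top stratum. Once you have passed to the right-hand side above, there are no remaining ``terms coming from $X^{\widetilde g}$'' for the lower-strata Morse points to cancel against, so the mechanism you invoke (``${\rm Eu}_X(V_i)$ is weighted the same way on $X$ and on $X^{g}$'') cannot be what makes them disappear. In stratified Morse theory, a nondegenerate critical point of $\widetilde g$ lying on $V_i\cap f^{-1}(\delta)$, with $\dim_{\mathbb C}V_i=d_i$, changes the weighted Euler characteristic $\chi(\,\cdot\,,{\rm Eu}_X)$ by $(-1)^{d_i-1}\,\eta(V_i,{\rm Eu}_X)$, where $\eta(V_i,{\rm Eu}_X)={\rm Eu}_X(V_i)-\chi(\mathbb{L}_i,{\rm Eu}_X)$ is the normal Morse index of the constructible function ${\rm Eu}_X$ along $V_i$ and $\mathbb{L}_i$ is the complex link of $V_i$. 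Contrary to your parenthetical remark, the normal Morse data is \emph{not} trivial on lower strata; what saves the statement is the specific vanishing $\eta(V_i,{\rm Eu}_X)=0$ for every $i<q$ (and $\eta(V_q,{\rm Eu}_X)=1$, the complex link of the open stratum being empty), which is precisely the local Euler condition of Theorem \ref{BLS} applied at points of $V_i$ in a normal slice. This is the whole content that singles out ${\rm Eu}_X$ among constructible functions: for arbitrary $\alpha$ one only gets $\sum_i(-1)^{d_i-1}n_i\,\eta(V_i,\alpha)$ summed over all strata (this is \cite[Theorem 4.2]{NN}), and Theorem \ref{Le Greuel} is the specialization $\alpha={\rm Eu}_X$. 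Without importing this vanishing, your argument has no way to discard the Morse points of $\widetilde g$ sitting on the strata $V_i$ with $i<q$, and the claimed formula would acquire extra terms.
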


\subsection{Toric varieties}\label{Subsectio-Toricvarieties}

The theory of toric varieties can be seen as a
cornerstone for the interaction between combinatorics and algebraic geometry, 
which relates the combinatorial study of convex polytopes to algebraic 
torus actions. Moreover, for polynomial functions defined on such varieties, 
it is possible to obtain a combinatorial description of the topology of their 
Milnor fibers in terms of Newton polygons (see \cite{MT1,Oka}). The reader 
may consult \cite{F,O} for an overview about toric 
varieties.

Let $N \cong \mathbb{Z}^d$ be a $\mathbb{Z}$-lattice of rank $d$ and $\sigma$ 
a strongly convex rational polyhedral cone in $N_{\mathbb{R}} = \mathbb{R} 
\otimes_{\mathbb{Z}} N$. We denote by $M$ the dual lattice of $N$ and 
the polar cone $\check{\sigma}$ of $\sigma$ in $M_{\mathbb{R}} = \mathbb{R} 
\otimes_{\mathbb{Z}} M$ by 
\[
\check{\sigma} = \left\{v \in M_{\mathbb{R}}; \ 
\ \left\langle u,v\right\rangle \geq 0 \ \ \text{for any} \ \ u \in 
\sigma\right\},
\] where $\langle \cdot , \cdot \rangle$ is the usual inner product in $\R^d$.
Then the dimension of $\check{\sigma}$ is $d$ and we obtain 
a semigroup $S_{\sigma}:= \check{\sigma} \cap M$.

\begin{definition}
	A $d$-dimensional affine toric variety $X_{\sigma}$ is defined by the 
	spectrum of $\mathbb{C}[S_{\sigma}]$,\ie 
	$X=\rm{Spec}(\mathbb{C}[S_{\sigma}])$.
\end{definition}

The algebraic torus $T = {\rm{Spec}}(\mathbb{C}[M])\cong 
(\mathbb{C}^{*})^d$ acts naturally on $X_{\sigma}$ and the $T$-orbits in 
$X_{\sigma}$ are indexed by the faces $\Delta $ of 
$\check{\sigma}$ ($\Delta \prec \check{\sigma}$). We denote by 
$\mathbb{L}(\Delta)$ the smallest linear 
subspace of $M_{\mathbb{R}}$ containing $\Delta$. For a face $\Delta$ of 
$\check{\sigma}$, denote by $T_{\Delta}$ the $T$-orbit in 
$\rm{Spec}(\mathbb{C}[M \cap \mathbb{L}(\Delta)])$ which corresponds to 
$\Delta$. We observe that the 
$d$-dimensional affine toric varieties are 
	exactly those $d$-dimensional affine, normal varieties admitting a 
	$(\mathbb{C}^{*})^d$-action with an open, dense orbit homeomorphic to 
	$(\mathbb{C}^{*})^d$. Moreover, each $T$-orbit $T_{\Delta}$ is 
	homeomorphic to $(\mathbb{C}^{*})^{r}$, where $r$ is the dimension of 
	$\mathbb{L}(\Delta)$.

Therefore we obtain a decomposition $X_{\sigma} = \bigsqcup_{\Delta 
\prec \check{\sigma}} T_{\Delta}$ into $T$-orbits, which are homeomorphic to 
algebraic torus $(\mathbb{C}^{*})^{r}$. Due to this fact, and also the 
informations coming from the combinatorial residing in these varieties, many 
questions that were originally studied for functions defined on 
$\mathbb{C}^d$ 
can be extended to functions defined on toric varieties.

Consider $f: X_{\sigma} \to \mathbb{C}$ a polynomial function on 
$X_{\sigma}$,\ie a function that corresponds to an element $f=\sum_{v\in 
S_{\sigma}}{a_v \cdot v}$ of $\mathbb{C}[S_{\sigma}]$, where $a_v \in 
\mathbb{C}$.

\begin{definition}
	Let $f=\sum_{v\in S_{\sigma}}{a_v \cdot v}$ be a polynomial function on 
	$X_{\sigma}$.
	
	\item (a) The set $\left\{v \in S_{\sigma}; \ \ a_v \neq 0 \right\} 
	\subset S_{\sigma}$ is called the support of $f$ and we denote it by 
	$\supp f$;

	\item (b) The Newton polygon $\Gamma_{+}(f)$ of $f$ is the convex hull of 
	\[
	\bigcup_{v \in \supp f} (v+ \check{\sigma}) \subset \check{\sigma}.
	\]	
\end{definition}                

Now let us fix a function $f \in \mathbb{C}[S_{\sigma}]$ such that $0 \notin 
\supp f$,\ie $f: X_{\sigma} \to \mathbb{C}$ vanishes at the $T$-fixed point 
$0$. Considering $M(S_{\sigma})$ the $\mathbb{Z}$-sublattice of rank $d$ in 
$M$ generated by $S_{\sigma}$, we have that each element $v$ of $S_{\sigma} 
\subset M(S_{\sigma})$ is identified with a $\mathbb{Z}$-vector 
$v=(v_1,\dots,v_d)$. Then for $g = \sum_{v \in S_{\sigma}} b_v \cdot v \in 
\mathbb{C}[S_{\sigma}]$ we can associate a Laurent polynomial $L(g) = \sum_{v 
\in S_{\sigma}} b_v \cdot x^{v}$ on $T = (\mathbb{C}^*)^d$, where 
$x^{v}:=x_1^{v_1}\cdot x_2^{v_2} \ldots \; x_d^{v_d}$.

\begin{definition}\label{degenerate}
  We say that $f=\sum_{v\in S_{\sigma}}{a_v\cdot v} \in 
  \mathbb{C}[S_{\sigma}]$ is non-degenerate if for any compact face $\gamma$ of 
  $\Gamma_{+}(f)$ the complex hypersurface 
  \begin{equation*}
  \bigsetdef{x=(x_1,\dots,x_d) \in 
  (\mathbb{C}^*)^d}{L(f_{\gamma})(x) = 0}
  \end{equation*}
  in $(\mathbb{C}^*)^d$ 
  is smooth and reduced, where $f_{\gamma} := \sum_{v \in \gamma \cap 
  S_{\sigma}} a_v \cdot v$.
\end{definition}

We can also study non-degeneracy in case of complete intersections defined on 
$X_{\sigma}$. Let $f_1,f_2, \dots, f_k \in \mathbb{C}[S_{\sigma}]$ $(1 \leq k 
\leq d = \dim X_{\sigma})$ and consider the following subvarieties of 
$X_{\sigma}$: 
$$V:= \left\{f_1=\dots=f_{k-1} = f_k =0 \right\} \subset W:= 
\left\{f_1=\dots=f_{k-1} =0 \right\}.$$ Assume that $0 \in V$. For each face 
$\Delta \prec \check{\sigma}$ such that $\Gamma_{+}(f_k) 
\cap \Delta \neq \emptyset$, we set 
\begin{equation*}
I(\Delta)= \bigsetdef{ j=1,2,\dots, k-1}{\Gamma_{+}(f_j) \cap \Delta \neq 
\emptyset} \subset \left\{ 
1,2,\dots,k-1 \right\}
\end{equation*} 
and $m(\Delta)= \# I(\Delta) + 1$.

Let $\mathbb{L}(\Delta)$ and $M(S_{\sigma} \cap \Delta)$ be as before and 
$\mathbb{L}(\Delta)^{*}$ the dual vector space of $\mathbb{L}(\Delta)$. Then 
$M(S_{\sigma} \cap \Delta)^{*}$ is naturally identified with a subset of 
$\mathbb{L}(\Delta)^{*}$ and the polar cone $\check{\Delta}= \bigsetdef{ u 
\in 
\mathbb{L}(\Delta)^{*}}{ \left\langle u, v\right\rangle \geq 0 \ \ 
\textup{for any} \ \ v\in \Delta}$ of $\Delta$ in 
$\mathbb{L}(\Delta)^{*}$ is a rational polyhedral convex cone with respect to 
the lattice $M(S_{\sigma} \cap \Delta)^{*}$ in $\mathbb{L}(\Delta)^{*}$.

\begin{definition} 
	(i) For a function $f = \displaystyle{ \sum_{v\in \Gamma_{+}(f)}} a_v \cdot 
	v 
	\in \mathbb{C}[S_{\sigma}]$ on $X_{\sigma}$ and $u \in \check{\Delta}$, 
	we set $f|_{\Delta} = \sum_{v\in \Gamma_{+}(f) \cap \Delta} a_v \cdot v \in 
	\mathbb{C}[S_{\sigma} \cap \Delta]$ and 
	\begin{equation*}
	\Gamma(f|_{\Delta};u) = 
	\bigsetdef{v \in \Gamma_{+}(f)\cap \Delta}{\left\langle u, v\right\rangle 
	= {\rm min} \left\langle u,w\right\rangle, \ \ \text{for} \ \ w \in 
	\Gamma_{+}(f) \cap \Delta}.
	\end{equation*} 
	We call $\Gamma(f|_{\Delta};u)$ the 
	supporting face of $u$ in $\Gamma_{+}(f) \cap \Delta$.
	
	(ii) For $j \in I(\Delta) \cup \left\{k \right\}$ and $u \in 
	\check{\Delta}$, we define the $u$-part $f_{j}^{u} \in 
	\mathbb{C}[S_{\sigma} \cap \Delta]$ of $f_j$ by 
	\[
	f_{j}^{u} = 
	\displaystyle{\sum_{v\in \Gamma(f_{j}|_{\Delta};u)} a_v \cdot v} \in 
	\mathbb{C}[S_{\sigma} \cap \Delta],
	\] 
	where $f_j = 
	\displaystyle{\sum_{v\in \Gamma_{+}(f_j)} a_v \cdot v} \in 
	\mathbb{C}[S_{\sigma}]$.
\end{definition}

By taking a $\mathbb{Z}$-basis of $M(S_{\sigma})$ and identifying the 
$u$-parts $f_{j}^{u}$ with Laurent polynomials $L(f_{j}^{u})$ on $T = 
(\mathbb{C}^{*})^d$ as before, we have that the following definition does 
not depend on the choice of the $\mathbb{Z}$-basis of $M(S_{\sigma})$.

\begin{definition}\label{ICdegenerate}
	We say that $(f_1,\dots,f_k)$ is non-degenerate if for any face $\Delta 
	\prec \check{\sigma}$ such that $\Gamma_{+}(f_k) \cap \Delta \neq 
	\emptyset$ (including the case where $\Delta = \check{\sigma}$) and any 
	$u \in {\rm Int}(\check{\Delta}) \cap M(S_{\sigma} \cap \Delta)^{*}$ the 
	following two subvarieties of $(\mathbb{C}^{*})^d$ are non-degenerate 
	complete intersections 
	\[
	\bigsetdef{x \in (\mathbb{C}^{*})^d}   
	{L(f_{j}^{u})(x)=0, \  \forall  j \in I(\Delta) }; \
	\bigsetdef{x \in (\mathbb{C}^{*})^d}{L(f_{j}^{u})(x)=0,  \ \forall j \in 
	I(\Delta) \cup \left\{k\right\}},
	\]
\noindent {{{\it i.e.}}, if they are reduced smooth complete intersections varieties in the torus $(\mathbb{C}^{*})^{d}$.}
\end{definition}
\noindent This last definition is presented in \cite{MT1} and it is a 
generalization of 
the one in \cite{Oka}.

For these non-degenerate singularities, it is possible to describe 
their geometrical and topological properties using the combinatorics of the 
Newton polygon. This is 
done in \cite{MT1} using mixed volume as follows. For each face $\Delta \prec 
\check{\sigma}$ of $\check{\sigma}$ such that 
$\Gamma_{+}(f_k) \cap \Delta \neq \emptyset$, we set 
$$
f_{\Delta} = 
\Bl\displaystyle{\prod_{j\in I(\Delta)}} f_j\Br\cdot f_k \in 
\mathbb{C}[S_{\sigma}]
$$ 
and consider its Newton polygon $\Gamma_{+}(f_{\Delta}) = \left\{\sum_{j \in 
	I(\Delta)} \Gamma_{+}(f_j) \right\} + \Gamma_{+}(f_k) \subset 
\check{\sigma}$. Let $\gamma_{1}^{\Delta},\dots, 
\gamma_{\nu(\Delta)}^{\Delta}$ be the compact faces of 
$\Gamma_{+}(f_{\Delta}) \cap \Delta (\neq \emptyset)$ such that 
$\dim\gamma_{i}^{\Delta} = \dim\Delta -1$. Then for each $1\leq i \leq 
\nu(\Delta)$ there exists a unique primitive vector $u_{i}^{\Delta} \in {\rm 
	Int}(\check{\Delta}) \cap M(S_{\sigma} \cap \Delta)^{*}$ which takes its 
minimal value in $\Gamma_{+}(f_{\Delta}) \cap \Delta$ exactly on 
$\gamma_{i}^{\Delta}$.

For $j \in I(\Delta) \cup \left\{k \right\}$, set $\gamma(f_j)_{i}^{\Delta} 
:= \Gamma(f_{j}|_{\Delta};u_{i}^{\Delta})$ and $d_{i}^{\Delta}:= {\rm 
	min}_{w\in \Gamma_{+}(f_k) \cap \Delta} \left\langle 
u_{i}^{\Delta},w\right\rangle$. Note that we have $$\gamma_{i}^{\Delta} = 
\displaystyle{\sum_{j \in I(\Delta) \cup \left\{k \right\}}} \gamma 
(f_j)_{i}^{\Delta}$$ for any face $\Delta \prec \check{\sigma}$ such that 
$\Gamma_{+}(f_k) \cap \Delta \neq \emptyset$ and $1 \leq i \leq \nu(\Delta)$. 
For each face $\Delta \prec \check{\sigma}$ such that $\Gamma_{+}(f_k) \cap 
\Delta \neq \emptyset$, $\dim\Delta \geq {\rm m}(\Delta)$ and $1 \leq i 
\leq \nu(\Delta)$, we set $I(\Delta) \cup \left\{k\right\} = 
\left\{j_1,j_2,\dots,j_{{\rm m}(\Delta)-1},k=j_{{\rm m}(\Delta)} \right\}$ 
and $$K_{i}^{\Delta} := \sum_{{{ \alpha_1+\dots+\alpha_{{\rm m}(\Delta)}  =  
			\dim\Delta -1} \atop {\alpha_q \geq 1 \ \ \text{for} \ \ q\leq 
			{\rm 
				m}(\Delta)-1}} \atop {\alpha_{{\rm m}(\Delta)} \geq 0}}^{} {\rm 
	Vol}_{\mathbb{Z}}(\underbrace{\gamma(f_{j_1})_{i}^{\Delta},\dots,
	\gamma(f_{j_1})_{i}^{\Delta}}_{\alpha_1-\text{times}},\dots,\underbrace{
	\gamma(f_{j_{{\rm
				m}(\Delta)}})_{i}^{\Delta},\dots,\gamma(f_{j_{{\rm 
				m}(\Delta)}})_{i}^{\Delta}}_{\alpha_{{\rm 
				m}(\Delta)}-\text{times}}).$$ Here 
$${\rm Vol}_{\mathbb{Z}}(\underbrace{\gamma(f_{j_1})_{i}^{\Delta},\dots,
	\gamma(f_{j_1})_{i}^{\Delta}}_{\alpha_1-\text{times}},\dots,\underbrace{
	\gamma(f_{j_{{\rm
				m}(\Delta)}})_{i}^{\Delta},\dots,\gamma(f_{j_{{\rm 
				m}(\Delta)}})_{i}^{\Delta}}_{\alpha_{{\rm 
				m}(\Delta)}-\text{times}})$$ is the 
normalized $(\dim\Delta -1)$-dimensional mixed volume with respect to 
the lattice $M(S_{\sigma} \cap \Delta) \cap L(\gamma_{i}^{\Delta})$ (see 
Definition $2.6$, pg $205$ from \cite{GKZ}). For $\Delta$ such that 
$\dim\Delta -1 =0$, we set $$K_{i}^{\Delta} = {\rm 
	Vol}_{\mathbb{Z}}(\underbrace{\gamma(f_{k})_{i}^{\Delta},\dots,
	\gamma(f_{k})_{i}^{\Delta}}_{0-\text{times}})
:= 1 $$ (in this case $\gamma(f_k)_{i}^{\Delta}$ is a point). 

\section{The Brasselet number and torus actions} 

We present formulas for the computation of the Brasselet number of 
a function defined on a non-degenerate complete intersection contained in 
toric variety using Newton 
polygons.  As applications we establish several results concerning 
its invariance for families of non-degenerate complete 
intersections.

Let $X_{\sigma} \subset \mathbb{C}^n$ be a $d$-dimensional toric variety and 
$(f_1,\dots,f_k): (X_{\sigma},0) \to (\mathbb{C}^k,0)$ a non-degenerate 
complete intersection, with $1\leq k \leq d$. From now on we will denote by $g$ 
the complete intersection $(f_1,\dots,f_{k-1})$ and by $f$ the function $f_k$. 
Let $\mathcal{T}$ be the decomposition of $X_{\sigma} = 
\bigsqcup_{\Delta \prec \check{\sigma}} T_{\Delta}$  into $T$-orbits, and 
$\mathcal{T}_{g}$ the decomposition of $X_{\sigma}^{g} = 
\bigsqcup_{\Delta \prec \check{\sigma}} T_{\Delta} \cap X_{\sigma}^{g}$. Since 
$g$ is a non-degenerate complete intersection, $\mathcal{T}_g$ is a decomposition of $X_\sigma^g$ into smooth subvarieties \cite[Lemma 4.1]{MT1}.  We are interest in  the situation that 
$\mathcal{T}_g$ is a Whitney stratification. 

\begin{example}
	Let $S_{\sigma} = \mathbb{Z}_{+}^3$ and let $X_{\sigma} = \mathbb{C}^3$ 
	be the smooth $3$-dimensional toric variety. Consider 
	$(g,f):(X_{\sigma},0) \to (\mathbb{C}^2,0)$ a non-degenerate complete 
	intersection, where
					$$
				 			g(z_1,z_2,z_3) = z_2^2 - z_1^3 -z_1^2z_3^2.
					$$
		\begin{figure}[h!]\centering
		\includegraphics[scale=0.5]{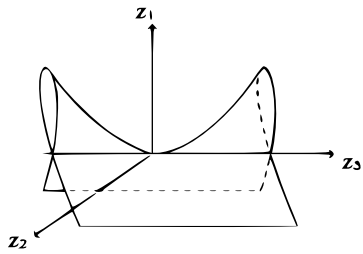}
		\caption{Real ilustration to 
			$X_{\sigma}^{g}$.}\label{fig:cuspidedeformation}
	\end{figure}

	The decomposition $\mathcal{V} = \{V_{0}=\{0\}, \ \ 					
	V_{1}=\{\text{axis} - z_3\} \setminus \{0\}, \ \ 	V_{2}  = 	
	(X_{\sigma}^{g})_{\rm{reg}}\}$ is a Whitney stratification of the 
	non-degenerate 	complete intersection $X_{\sigma}^{g}$, where 			
	$(X_{\sigma}^{g})_{\rm{reg}}$ is the regular part of  $X_{\sigma}^{g}$ 
	(see Figure \eqref{fig:cuspidedeformation}).	For a subset $I 
	\subset 		\{1,2,3\}$ we denote the face $\sum_{i \in I} 
	{\mathbb{R}_{+}}{e_{i}}^{*}$ of $\check{\sigma} = 
	(\mathbb{R}_{+}^{3})^{*}$ 		by $\Delta_{I}$ and 		by $\Delta_0$ 
	the face $\{0\}$, where ${e_i}^*$ 				denotes the 	
	elements of the canonical basis 
	of 																		
						$(\mathbb{R}^{3})^{*}$.
		 The Whitney stratification 				$\mathcal{T}$ of 
	$X_{\sigma}$ is given by the  following  $T$-orbits:	$T_{\Delta_{0}} 
	= 	\{0\}$, $T_{\Delta_{1}} = 								
	\mathbb{C}^{*} \times \{0\} \times \{0\}$, $T_{\Delta_{2}} = \{0\} 
	\times 								\mathbb{C}^{*} 		\times \{0\}$, 
	$T_{\Delta_{3}} = \{0\} \times \{0\} \times 	\mathbb{C}^{*}$, 
	$T_{\Delta_{12}} =		\mathbb{C}^{*} 
	\times 															
	\mathbb{C}^{*}
	 \times \{0\}$, $T_{\Delta_{13}} =	\mathbb{C}^{*} \times \{0\} 
	\times 			\mathbb{C}^{*}$, $T_{\Delta_{23}} = \{0\} 
	\times						\mathbb{C}^{*} \times \mathbb{C}^{*}$, 
	$T_{\Delta_{123}} = \mathbb{C}^{*} \times	\mathbb{C}^{*} 		\times 
	\mathbb{C}^{*}$. Now, observe that 				
	$T_{\Delta_{3}} 			\cap X_{\sigma}^{g} = T_{\Delta_{3}} = 
	V_{1}$, then the stratification 	
			$$
					\mathcal{T}_g = \left\{ T_{\Delta_{I}} 			\cap X_{\sigma}^{g}, \ \ \Delta_{0} \right\}_{\Delta_{I} \prec \check{\sigma}}
			$$
	is a Whitney stratification of $X_{\sigma}^{g}$.	
\end{example}

In Section \ref{Section-ToricSurface} we will provide others examples where 
the stratification $\mathcal{T}_{g}$ is Whitney (see \cite[Section 9]{Oka3} 
for more examples).
We observe that in general $\mathcal{T}_g$ is not a Whitney 
stratification see \cite[Example 9.5]{Oka3}.

\begin{theorem}\label{BrasseletIC}
	Let $X_{\sigma} \subset \mathbb{C}^n$ be a $d$-dimensional toric variety 
	and $(g,f): (X_{\sigma},0) \to (\mathbb{C}^k,0)$ a non-degenerate complete 
	intersection. If $\mathcal{T}_g$ is a Whitney stratification, then
	\begin{equation*}
	{\rm B}_{f,X_{\sigma}^{g}}(0)   =  \sum_{{\Gamma_{+} 
	(f) \cap \Delta \neq \emptyset} \atop {\dim \Delta \ \geq \ m(\Delta)}} 
	(-1)^{\dim \Delta \ - \ m(\Delta)} \left( \sum_{i=1}^{\nu(\Delta)} 
	d_{i}^{\Delta} \cdot K_{i}^{\Delta} \right)\cdot 
	\rm{Eu_{X_{\sigma}^{g}}}(T_{\Delta} \cap X_{\sigma}^g).
	\end{equation*}
\end{theorem}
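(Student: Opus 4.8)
The plan is to unwind the definition of the Brasselet number over a good stratification coming from the torus decomposition, and then to feed in the orbit-wise Euler-characteristic computations of Matsui and Takeuchi \cite{MT1}.

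First I would produce the good stratification. Since, by hypothesis, $\mathcal{T}_{g}=\{T_{\Delta}\cap X_{\sigma}^{g}\}_{\Delta\prec\check{\sigma}}$ is a Whitney stratification of $X_{\sigma}^{g}$, refining it so that it becomes adapted to $X_{\sigma}^{g,f}:=X_{\sigma}^{g}\cap f^{-1}(0)$ and invoking \cite{Le2,Massey1} — exactly as recalled after Definition \ref{Def-GoodStratification} — produces a good stratification $\mathcal{V}$ of $X_{\sigma}^{g}$ relative to $f$ whose strata refine the orbit strata $T_{\Delta}\cap X_{\sigma}^{g}$. Note that $f$ vanishes identically on $T_{\Delta}$, equivalently $T_{\Delta}\cap X_{\sigma}^{g}\subset f^{-1}(0)$, precisely when $\Gamma_{+}(f)\cap\Delta=\emptyset$; when $\Gamma_{+}(f)\cap\Delta\neq\emptyset$ the orbit stratum gets split by the refinement into pieces contained in $f^{-1}(0)$ (good strata) and pieces contained in its complement.

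Next I would substitute this into Definition \ref{BrasseletN}. Because $0<|\delta|\ll\varepsilon\ll 1$, the level $f^{-1}(\delta)$ is disjoint from $f^{-1}(0)$, so every stratum of $\mathcal{V}$ contained in $f^{-1}(0)$ — the stratum $\{0\}$ and all the good strata — contributes $\chi(\emptyset)=0$. For a face $\Delta$ with $\Gamma_{+}(f)\cap\Delta\neq\emptyset$, the local Euler obstruction ${\rm Eu}_{X_{\sigma}^{g}}$, being constructible, is constant on the stratum $T_{\Delta}\cap X_{\sigma}^{g}$ of the Whitney stratification $\mathcal{T}_{g}$, hence on each refined piece of it; I write ${\rm Eu}_{X_{\sigma}^{g}}(T_{\Delta}\cap X_{\sigma}^{g})$ for that value. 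Using additivity of the Euler characteristic over the locally closed pieces of the refinement, together with the identity $(T_{\Delta}\cap X_{\sigma}^{g}\setminus f^{-1}(0))\cap f^{-1}(\delta)=(T_{\Delta}\cap X_{\sigma}^{g})\cap f^{-1}(\delta)$, I may regroup the sum in Definition \ref{BrasseletN} according to orbits:
\[
{\rm B}_{f,X_{\sigma}^{g}}(0)=\sum_{\Gamma_{+}(f)\cap\Delta\neq\emptyset}\chi\bigl((T_{\Delta}\cap X_{\sigma}^{g})\cap B_{\varepsilon}(0)\cap f^{-1}(\delta)\bigr)\cdot{\rm Eu}_{X_{\sigma}^{g}}(T_{\Delta}\cap X_{\sigma}^{g}).
\]
It is for these two reductions — the construction of the good stratification and the constancy of the Euler obstruction along the orbits — that the hypothesis that $\mathcal{T}_{g}$ be Whitney is needed.

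Finally I would evaluate the orbit-wise local Milnor fiber $(T_{\Delta}\cap X_{\sigma}^{g})\cap B_{\varepsilon}(0)\cap f^{-1}(\delta)$ for each $\Delta$ with $\Gamma_{+}(f)\cap\Delta\neq\emptyset$. If $\dim\Delta<m(\Delta)$, then $\dim(T_{\Delta}\cap X_{\sigma}^{g})\leq\dim\Delta-\#I(\Delta)\leq 0$, so by non-degeneracy $T_{\Delta}\cap X_{\sigma}^{g}$ is either empty or a discrete point set which, not accumulating at the origin, avoids $B_{\varepsilon}(0)$ once $\varepsilon$ is small; hence this Euler characteristic vanishes and the face drops out of the sum. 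If $\dim\Delta\geq m(\Delta)$, the computation of \cite{MT1} — which describes, orbit by orbit, the vanishing cohomology of $f$ on a non-degenerate complete intersection in a toric variety through mixed volumes of the Newton polygons $\Gamma_{+}(f_{\Delta})$, by means of exactly the data $\nu(\Delta)$, $u_{i}^{\Delta}$, $d_{i}^{\Delta}$, $K_{i}^{\Delta}$ fixed above — yields
\[
\chi\bigl((T_{\Delta}\cap X_{\sigma}^{g})\cap B_{\varepsilon}(0)\cap f^{-1}(\delta)\bigr)=(-1)^{\dim\Delta-m(\Delta)}\sum_{i=1}^{\nu(\Delta)}d_{i}^{\Delta}\cdot K_{i}^{\Delta}.
\]
Inserting this into the previous display gives the claimed formula. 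I expect the main obstacle to be this last step: one must correctly identify the orbit-wise local Milnor fiber with the object handled in \cite{MT1} and match up all the combinatorial quantities, the sign included; by contrast the middle step is essentially bookkeeping, once additivity of the Euler characteristic and constancy of the local Euler obstruction along the orbits have been secured.
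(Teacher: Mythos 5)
Your argument follows the paper's proof essentially step for step: refine $\mathcal{T}_g$ to a good stratification adapted to $X_{\sigma}^{g}\cap f^{-1}(0)$, regroup the Brasselet sum by orbits using the constancy of ${\rm Eu}_{X_{\sigma}^{g}}$ along the Whitney strata $T_{\Delta}\cap X_{\sigma}^{g}$, and feed in the orbit-wise Euler characteristic computation of Matsui--Takeuchi. The only cosmetic differences are that the paper routes the last step through the nearby cycle functor (using that $0$ is an isolated stratified critical value of $f$ and \cite[Eq.~(4.2)]{MT2} before invoking the proof of \cite[Theorem 3.12]{MT1}), while you instead make explicit why the faces with $\dim\Delta<m(\Delta)$ drop out; neither changes the substance.
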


\begin{proof}
	 Let $(X_\sigma^g)^f := X^g_\sigma \cap X^f_\sigma$ be the zero set of $f$ 
	 in $X^g_\sigma$. By \cite[Section 1]{Massey} or \cite[Section 
	 1.7]{GoMac}, there exists a refinement 
	 of $\mathcal{T}_g$, denoted by $\mathcal{T}_{(g,f)} = \{W_i\}$, such that 
	 $\mathcal{T}_{(g,f)}$ is 
	 adapted to $(X_\sigma^g)^f$ and it is a Whitney stratification. 
	 Moreover, this stratification satisfies the $(a_f)$-Thom condition (see 
	 \cite{P}), and then it 
	 is a good stratification of $X^g_\sigma$ with relation to $f$. By 
	 definition, the Brasselet number reads
	 \[
	 {\rm B}_{f,X_\sigma^g}(0) = \sum_{i}\chi \big(W_i\cap 
	 B_{\varepsilon}(0) 
	 \cap f^{-1}(\delta) \big) \cdot
	 {\rm Eu}_{X_\sigma^g}(W_i),
	 \]
	 where $0< \left| \delta \right| \ll \varepsilon \ll 1$.
	 
	 As $\mathcal{T}_{(g,f)}$ is a refinement of $\mathcal{T}_g$, each 
	 $T_\Delta\cap X^g_\sigma$ is a disjoint union of $W_i$, \ie 
	 \begin{equation*}
	 T_\Delta\cap X^g_\sigma = \bigsqcup_{W_i \cap T_\Delta\cap X^g_\sigma  
	 \not = \emptyset} W_{i},
	 \end{equation*}
	 and since $\mathcal{T}_g$ is a Whitney stratification,
	 \begin{equation*}
	 {\rm Eu}_{X_\sigma^g}(T_\Delta\cap X^g_\sigma) = {\rm 
	 Eu}_{X_\sigma^g}(W_i),
	 \end{equation*}
	 for any $W_i$ that meets $T_\Delta\cap X^g_\sigma$. Then
	 \begin{equation}\label{Eq-BrasseleNumber1}
	 {\rm B}_{f,X_\sigma^g}(0) = \sum_{\Delta}\chi \big(T_\Delta\cap X^g_\sigma 
	 \cap 
	 B_{\varepsilon} 
	 \cap f^{-1}(\delta) \big) \cdot
	 {\rm Eu}_{X_\sigma^g}(T_\Delta\cap X^g_\sigma).
	 \end{equation}
	 Note that $f:X_{\sigma} \to \mathbb{C}$ has the isolated 
	 stratified critical value $0 \in \mathbb{C}$ by 
	 \cite[Proposition 1.3]{Massey}. Thus the integers $\chi (T_\Delta\cap 
	 X^g_\sigma  \cap	 B_{\varepsilon}	 
	 \cap f^{-1}(\delta) \big)$ can be calculated by the nearby cycle functor 
	 ${\psi}_{f}$ \cite[Eq. (4.2)]{MT2},\ie
	 \begin{equation}\label{nearbycicle1}
	 \chi \big(	T_\Delta\cap X^g_\sigma  \cap	 B_{\varepsilon}	 \cap 
	 f^{-1}(\delta) \big)	= \chi(\psi_{f}(\mathbb{C}_{T_{\Delta} \cap 	
	 X_{\sigma}^{g}})_{0}),
	 \end{equation}
	 since $X_{\sigma}^{g}\cap B_\varepsilon \cap f^{-1}(\delta)  = 
	 \bigsqcup_{\Delta \prec 
	 \check{\sigma}} T_{\Delta} \cap B_\varepsilon\cap X_{\sigma}^{g} \cap 
	 f^{-1}(\delta)$ is a 
	 Whitney stratification of the Milnor fiber $X_{\sigma}^{g}\cap 
	 B_\varepsilon \cap 
	 f^{-1}(\delta)$. By the proof of \cite[Theorem 
	 3.12]{MT1} equations (52), (77), (78), we have that
	 \begin{equation}\label{nearbycicle2}
	 \chi(\psi_{f}(\mathbb{C}_{T_{\Delta} \cap X_{\sigma}^{g}})_{0}) =  
	(-1)^{\dim \Delta \ - \ m(\Delta)} \left( \sum_{i=1}^{\nu(\Delta)} 
	d_{i}^{\Delta} \cdot K_{i}^{\Delta} \right).
	 \end{equation}
	 To complete the proof, we observe that $f\equiv 0$ on $T_{\Delta}$ for any 
	 face $\Delta$ such that $\Gamma_{+}(f) \cap \Delta = \emptyset$, 
	 then we can neglect those faces. 
\end{proof}

If $f: X_{\sigma}^{g} \to \mathbb{C}$ has a stratified isolated
critical point, then Equation \eqref{EqBrasEul} holds. Altogether, we have: 

\begin{corollary}\label{BrasseletIC3}
	Let $X_{\sigma} \subset \mathbb{C}^n$ be a $d$-dimensional toric 
	variety and $(g,f): (X_{\sigma},0) \to (\mathbb{C}^k,0)$ a non-degenerate 
	complete intersection. If $\mathcal{T}_g$ is a Whitney 
	stratification and $f: X_{\sigma}^{g} \to \mathbb{C}$ 
	has an isolated  singularity at the origin, then
	\begin{equation*}	
	{\rm Eu}_{f,X_{\sigma}^{g}}(0)   =  {\rm Eu}_{X_{\sigma}^{g}}(0) - 
	\sum_{{\Gamma_{+} (f) \cap \Delta \neq \emptyset} \atop 
	{\dim\Delta \ \geq \ m(\Delta)}} (-1)^{\dim \Delta \ - \ m(\Delta)} 
	\left( \sum_{i=1}^{\nu(\Delta)} d_{i}^{\Delta} \cdot K_{i}^{\Delta} 
	\right)\cdot 
	\rm{Eu_{X_{\sigma}^{g}}}(T_{\Delta} \cap X_{\sigma}^g).
	\end{equation*}
\end{corollary}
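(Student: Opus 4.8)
The plan is to deduce Corollary \ref{BrasseletIC3} directly from Theorem \ref{BrasseletIC} together with Equation \eqref{EqBrasEul}, so essentially no new geometric work is needed. First I would invoke the hypothesis that $\mathcal{T}_g$ is a Whitney stratification of $X_\sigma^g$ and that $f:X_\sigma^g\to\mathbb{C}$ has a stratified isolated critical point at the origin. This is precisely the setting in which, as recalled after Definition \ref{BrasseletN}, the decomposition obtained from $\mathcal{T}_g$ by splitting each stratum $V_\alpha$ into $V_\alpha\setminus (X_\sigma^g)^f$, $V_\alpha\cap (X_\sigma^g)^f\setminus\{0\}$ and $\{0\}$ is a good stratification of $X_\sigma^g$ relative to $f$ — the \emph{good stratification induced by $f$}. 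Since $X_\sigma^g$ is equidimensional (being a non-degenerate complete intersection of dimension $d-(k-1)$, cf.\ \cite{MT1}), Theorem \ref{BMPS} applies and yields, via its reformulation \eqref{EqBrasEul}, the identity ${\rm B}_{f,X_\sigma^g}(0) = {\rm Eu}_{X_\sigma^g}(0) - {\rm Eu}_{f,X_\sigma^g}(0)$.

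Next I would simply solve this identity for ${\rm Eu}_{f,X_\sigma^g}(0)$, obtaining
\[
{\rm Eu}_{f,X_\sigma^g}(0) = {\rm Eu}_{X_\sigma^g}(0) - {\rm B}_{f,X_\sigma^g}(0),
\]
and then substitute the combinatorial expression for ${\rm B}_{f,X_\sigma^g}(0)$ furnished by Theorem \ref{BrasseletIC}. Since the hypotheses of Corollary \ref{BrasseletIC3} are exactly those of Theorem \ref{BrasseletIC} (plus the extra assumption that $f$ has an isolated singularity, which is only used to license \eqref{EqBrasEul}), this substitution is immediate and produces the claimed formula verbatim.

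The only point requiring a word of care is the applicability of \eqref{EqBrasEul}: that equation was stated for an equidimensional $X$ with $f$ having a \emph{stratified} isolated critical point relative to a good stratification, whereas the corollary phrases the hypothesis as ``$f$ has an isolated singularity at the origin.'' I would reconcile these by noting that ``isolated singularity'' here means stratified isolated critical point with respect to $\mathcal{T}_g$, so that the good stratification induced by $f$ is available and Theorem \ref{BMPS} applies on the nose. Thus the main — indeed only — obstacle is bookkeeping: making sure the equidimensionality of $X_\sigma^g$ and the existence of the induced good stratification are in place so that \eqref{EqBrasEul} may be invoked; once that is noted, the proof is a one-line algebraic rearrangement of Theorem \ref{BrasseletIC}.
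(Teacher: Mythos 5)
Your argument is exactly the paper's: the corollary is obtained by combining Theorem \ref{BrasseletIC} with Equation \eqref{EqBrasEul} (valid since $X_\sigma^g$ is equidimensional and $f$ has a stratified isolated critical point) and solving for ${\rm Eu}_{f,X_\sigma^g}(0)$. Your extra care about equidimensionality and the induced good stratification is sound bookkeeping that the paper leaves implicit.
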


\medskip

When $k=1$, using a similar argument used in Theorem \ref{BrasseletIC}, we 
obtain ${\rm B}_{f,X_{\sigma}}(0)$. In fact, using again \cite{GoMac,P} we can 
pass to a refinement of $\mathcal{T}$ to obtain a good stratification of 
$X_\sigma$ relative to $f$. Thus we can use the same argument as 
in the proof of Theorem \ref{BrasseletIC} to obtain  
\begin{equation*}
 {\rm B}_{f,X_\sigma}(0) = \sum_{\Delta}\chi \big(T_\Delta 
\cap 
B_{\varepsilon}(0) 
\cap f^{-1}(\delta) \big) \cdot
{\rm Eu}_{X_\sigma}(T_\Delta).
\end{equation*}
Now for each face $\Delta 
\prec 
\check{\sigma}$ such that 
$\Gamma_{+}(f) \cap \Delta \neq \emptyset$, let 
$\beta_{1}^{\Delta}$, $\beta_{2}^{\Delta}$, 
$\dots,\beta_{\mu(\Delta)}^{\Delta}$ be 
the compact faces of $\Gamma_{+}(f) \cap \Delta$ such that 
$\rm{dim}\beta_{i}^{\Delta} = \dim\Delta -1$. Let $\Gamma_{i}^{\Delta}$ 
be the convex hull of $\beta_{i}^{\Delta} \sqcup \left\{0\right\}$ in 
$\mathbb{L}(\Delta)$ and consider the normalized $(\dim\Delta)$-dimensional 
volume of 
$\Gamma_{i}^{\Delta}$,
$\rm{Vol}_{\mathbb{Z}}(\Gamma_{i}^{\Delta}) \in \mathbb{Z}$,  with respect to 
the lattice $M(S_{\sigma} \cap 
\Delta)$. Here $M(S_{\sigma} \cap \Delta)$ denotes the sublattice of 
$M(S_{\sigma})$ generated by $S_{\sigma} \cap \Delta$. Then we have the 
following result.

\begin{proposition}\label{propositionBrasselet}
	Assume that $f = \sum_{v\in S_{\sigma}} a_v \cdot v \in 
	\mathbb{C}[S_{\sigma}]$ is non-degenerate. Then $${\rm 
	B}_{f,X_{\sigma}}(0)= \sum_{\Gamma_{+} (f) \cap \Delta \neq 
	\emptyset}(-1)^{\dim\Delta - 1} \big( 
	\sum_{i=1}^{\mu(\Delta)}{\rm{Vol_{\mathbb{Z}}}}(\Gamma_{i}^{\Delta}) 
	\big)\cdot \rm{Eu_{X_{\sigma}}}(T_{\Delta}).$$  
\end{proposition}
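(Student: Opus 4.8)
The plan is to reduce Proposition \ref{propositionBrasselet} to the case $k=1$ of Theorem \ref{BrasseletIC} and then to identify the combinatorial quantities. First I would observe that when $k=1$ the complete intersection $(g,f)$ collapses to the single function $f = f_1$, so $X_\sigma^g = X_\sigma$, the index set $I(\Delta)$ is empty, and $m(\Delta) = \#I(\Delta) + 1 = 1$. With $I(\Delta) = \emptyset$ we have $f_\Delta = f_k = f$, hence $\Gamma_+(f_\Delta) = \Gamma_+(f)$, so the compact faces $\gamma_i^\Delta$ of $\Gamma_+(f_\Delta)\cap\Delta$ of dimension $\dim\Delta - 1$ are exactly the faces $\beta_i^\Delta$ appearing in the statement, and therefore $\nu(\Delta) = \mu(\Delta)$. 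As already noted in the paragraph preceding the Proposition (invoking \cite{GoMac,P}), one can refine the $T$-orbit decomposition $\mathcal{T}$ to a good stratification of $X_\sigma$ relative to $f$, and repeating verbatim the argument of the proof of Theorem \ref{BrasseletIC} — the refinement is adapted to $X_\sigma^f$, satisfies Whitney's conditions and the $(a_f)$-Thom condition, $f$ has an isolated stratified critical value at $0$ by \cite[Proposition 1.3]{Massey}, and the Euler obstruction is constant along each $T_\Delta$ since $\mathcal{T}$ is Whitney — yields
\begin{equation*}
{\rm B}_{f,X_\sigma}(0) = \sum_{\Gamma_+(f)\cap\Delta\neq\emptyset}\chi\big(T_\Delta\cap B_\varepsilon(0)\cap f^{-1}(\delta)\big)\cdot {\rm Eu}_{X_\sigma}(T_\Delta),
\end{equation*}
where the faces with $\Gamma_+(f)\cap\Delta=\emptyset$ are discarded because $f\equiv 0$ on $T_\Delta$ there.

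Next I would compute the local Euler characteristic $\chi\big(T_\Delta\cap B_\varepsilon(0)\cap f^{-1}(\delta)\big)$. As in the proof of Theorem \ref{BrasseletIC}, this number equals $\chi(\psi_f(\mathbb{C}_{T_\Delta})_0)$ via \cite[Eq. (4.2)]{MT2}, and then I would appeal to the corresponding computation in \cite{MT1} in the hypersurface case $k=1$. Specializing formula \eqref{nearbycicle2} to $m(\Delta) = 1$ gives $\chi(\psi_f(\mathbb{C}_{T_\Delta})_0) = (-1)^{\dim\Delta - 1}\sum_{i=1}^{\nu(\Delta)} d_i^\Delta\cdot K_i^\Delta$. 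It remains to check that in the hypersurface case this reduces to $\sum_{i=1}^{\mu(\Delta)}{\rm Vol}_{\mathbb{Z}}(\Gamma_i^\Delta)$. When $I(\Delta)=\emptyset$, the sum defining $K_i^\Delta$ has the single admissible multi-index $\alpha_{m(\Delta)} = \alpha_1 = \dim\Delta - 1$, so $K_i^\Delta$ is just the normalized $(\dim\Delta-1)$-dimensional volume of the face $\gamma_i^\Delta = \beta_i^\Delta$ with respect to the lattice $M(S_\sigma\cap\Delta)\cap L(\beta_i^\Delta)$; and $d_i^\Delta = \min_{w\in\Gamma_+(f)\cap\Delta}\langle u_i^\Delta, w\rangle$ is the lattice distance from $\beta_i^\Delta$ to the origin along the primitive inner normal $u_i^\Delta$. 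By the standard elementary fact that the normalized volume of the cone (pyramid) $\Gamma_i^\Delta = \mathrm{conv}(\beta_i^\Delta\cup\{0\})$ over a facet at lattice height $d_i^\Delta$ equals $d_i^\Delta$ times the normalized volume of the base, we get ${\rm Vol}_{\mathbb{Z}}(\Gamma_i^\Delta) = d_i^\Delta\cdot K_i^\Delta$. Summing over $i$ and substituting into the displayed formula for ${\rm B}_{f,X_\sigma}(0)$ gives exactly the asserted identity.

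I expect the main obstacle to be a bookkeeping one rather than a conceptual one: carefully matching the normalizations of the lattice volumes. One must verify that the "pyramid volume = height $\times$ base volume" identity holds with the specific normalized lattices in play — $M(S_\sigma\cap\Delta)$ for $\Gamma_i^\Delta$ on the $\dim\Delta$-dimensional side, and $M(S_\sigma\cap\Delta)\cap L(\gamma_i^\Delta)$ for $K_i^\Delta$ on the $(\dim\Delta - 1)$-dimensional side — and that $d_i^\Delta$ is precisely the index of the sublattice $\mathbb{Z} v_0 \oplus (M(S_\sigma\cap\Delta)\cap L(\gamma_i^\Delta))$ inside $M(S_\sigma\cap\Delta)$ for $v_0$ any vertex of $\gamma_i^\Delta$. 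The degenerate convention for $\dim\Delta - 1 = 0$, where $K_i^\Delta := 1$ so that $d_i^\Delta\cdot K_i^\Delta = d_i^\Delta = {\rm Vol}_{\mathbb{Z}}(\Gamma_i^\Delta)$ is the length of the segment from $0$ to the point $\beta_i^\Delta$, is consistent and should be mentioned explicitly. Everything else — the existence of the good stratification, the nearby-cycle identity, and the reduction of \eqref{nearbycicle2} — is inherited directly from Theorem \ref{BrasseletIC} and \cite{MT1}, so the proof is essentially a specialization accompanied by this volume-normalization check.
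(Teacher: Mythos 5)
Your proposal is correct and follows essentially the same route as the paper: reduce to the $k=1$ case of the orbit-decomposition formula for ${\rm B}_{f,X_\sigma}(0)$, note that with $I(\Delta)=\emptyset$ and $m(\Delta)=1$ the mixed volume $K_i^\Delta$ collapses to the normalized volume of $\beta_i^\Delta$ (the paper cites \cite[Proposition 2.7]{GKZ} for this), and conclude via the pyramid identity ${\rm Vol}_{\mathbb{Z}}(\Gamma_i^\Delta)=d_i^\Delta\cdot K_i^\Delta$. Your write-up is in fact more careful than the paper's about the lattice normalizations and the $\dim\Delta-1=0$ convention, which the paper simply asserts.
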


\begin{proof} 
	For each $(\dim\Delta -1)$-compact face $\beta_{i}^{\Delta}$ of 
	$\Gamma_{+}(f) \cap \Delta \neq \emptyset$, with $1\leq i \leq 
	\mu(\Delta)$ we have 
	\begin{equation*}
	K_{i}^{\Delta} := {\rm 
	Vol}_{\mathbb{Z}}(\underbrace{\beta_{i}^{\Delta},\dots,\beta_{i}^{\Delta}}_{{({\rm
	 dim}\Delta -1})-\text{times}}),
	\end{equation*}
 	and from 
	\cite[Proposition 2.7]{GKZ} we now that 	
	\begin{equation*}
	{\rm 
	Vol}_{\mathbb{Z}}(\underbrace{\beta_{i}^{\Delta},\dots,
	\beta_{i}^{\Delta}}_{{(\dim\Delta \ -\ 1})-\text{times}}) = {\rm 
	Vol}_{\mathbb{Z}}(\beta_{i}^{\Delta}).
	\end{equation*}
	Therefore, the result follows 
	from the fact that 
	\begin{equation}
	{\rm{Vol_{\mathbb{Z}}}}(\Gamma_{i}^{\Delta}) = 
	d_{i}^{\Delta}\cdot K_{i}^{\Delta},
	\end{equation}
	for $1\leq i \leq \mu(\Delta)$.
\end{proof}

We will apply Theorem \ref{BrasseletIC} in order to show that the Brasselet 
number is invariant for some families of complete intersections.  We need 
some new concepts and notations.

\LHchange{A familia \'e para todo $t\in\C$? } \TDchange{Ok}
\begin{definition}
A deformation of a map germ $f : (X, 0) \to (\mathbb{C}^{k}, 0)$ is another
map germ $F : (\mathbb{C}\times X) \to (\mathbb{C}^{k}, 0)$ such that $F(0, 
x) = f(x)$, for all $x \in X$. 
\end{definition}
We assume
that $F$ is origin preserving, that is, $F(t, 0) = 0$ for all $t \in \mathbb{C}$, so we have a $1$-parameter family of
map germs $f_t : (X, 0) \to (\mathbb{C}^{k}, 0)$ given by $f_t(x) = F(t, x)$. 
Moreover, associated to the family $f_t : (X, 0) \to (\mathbb{C}^{k}, 0)$ we 
have the family $X^{f_t} = X \cap f_{t}^{-1}(0)$ of subvarieties of $X$.
In the particular case of a polynomial function $f : (X, 0) \to (\mathbb{C}, 
0)$, any polynomial deformation $f_t$ can be written as:
			\begin{equation}\label{family1}
					f_t(x) = f(x)+ \sum_{i=1}^{r} \theta_{i}(t) \cdot  h_{i}(x)
			\end{equation}
for some polynomials $h_{i}: (X,0) \to (\mathbb{C},0)$ and $\theta_{i}:(\mathbb{C},0) \to (\mathbb{C},0)$, where $\theta_{i}(0)=0$, for all $i=1,\dots,r$.

Suppose that $f: X_{\sigma} \to \mathbb{C}$ is a polynomial function  
defined on a toric variety $X_{\sigma}$. Consider a family as in 
\Eqref{family1} with $\Gamma_{+}(h_i) \subset \Gamma_{+}(f)$, for all 
$i=1,\dots,r$. Let
$\gamma_{i_1}^{\Delta},\gamma_{i_2}^{\Delta},\dots,
\gamma_{i_{\nu(\Delta)}}^{\Delta}$ 
and
$\beta_{1}^{\Delta},\beta_{2}^{\Delta},\dots,\beta_{\mu(\Delta)}^{\Delta}$ 
be the compact faces of $\Gamma_{+}(h_i) \cap 
\Delta$ and of $\Gamma_{+}(f) \cap \Delta$, respectively, such that 
$\dim\gamma_{l}^{\Delta}=\dim\beta_{j}^{\Delta} = \dim\Delta -1$.
If for each face $\Delta \prec 
\check{\sigma}$, with $\Gamma_{+}(h_i) \cap \Delta \neq 
\emptyset$, 
we have that $\gamma_{i_l}^{\Delta} \cap \beta_{j}^{\Delta} = \emptyset$, for 
all 
$l=1,\dots,i_{\nu(\Delta)}$ and $j=1,\dots,\mu(\Delta)$, then
\begin{equation*}
\Gamma_+(f_t) = \Gamma_+(f), \ \ \text{for all} \ \ t\in \mathbb{C}.
\end{equation*}
In this case, we fix the notation  			
\begin{equation}\label{condition}
	\Gamma_{+}(h_i) \subsetneqq \Gamma_{+}(f),
\end{equation}
for all $i=1,\dots,r$.
\medskip

In the sequel, we present some applications of Theorem \ref{BrasseletIC}.

\begin{corollary}\label{BrasseletIC2}
	Let $X_{\sigma} \subset \mathbb{C}^n$ be a $d$-dimensional toric variety.
	Suppose that
	$(g,f_t): (X_{\sigma},0) \to (\mathbb{C}^k,0)$ is a 
	non-de\-ge\-ne\-ra\-te 
	complete intersection for each $t\in\mathbb{C}$ with
	\[
	f_t(x) = f(x)+\sum_{i=1}^{r} \theta_{i}(t) \cdot h_{i}(x)
	\]
	being a polynomial function on $X_{\sigma}$ with $h_i$ satisfying the 
	condition \eqref{condition} for all $i=1,\dots,r$. If 
	$\mathcal{T}_g$ is a Whitney 
	stratification and $(g,f_t)$ is a family of
	non-degenerate 					complete	intersections, then 
	${\rm B}_{f_t,X_{\sigma}^g}(0)$ is constant for the family. 
\end{corollary}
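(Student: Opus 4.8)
The plan is to apply Theorem~\ref{BrasseletIC} directly to each member $(g,f_t)$ of the family and then argue that the right-hand side of that formula is literally the same expression for every $t\in\mathbb{C}$. First I would note that the hypotheses of Theorem~\ref{BrasseletIC} are satisfied for each $t$: by assumption $(g,f_t)$ is a non-degenerate complete intersection in $X_\sigma$ for every $t$, and $\mathcal{T}_g$ (which depends only on $g$, not on $f_t$) is a Whitney stratification. Hence
\[
{\rm B}_{f_t,X_{\sigma}^{g}}(0) = \sum_{{\Gamma_{+}(f_t) \cap \Delta \neq \emptyset} \atop {\dim\Delta \ \geq \ m(\Delta)}} (-1)^{\dim\Delta - m(\Delta)} \left( \sum_{i=1}^{\nu(\Delta)} d_{i}^{\Delta} \cdot K_{i}^{\Delta} \right)\cdot \rm{Eu}_{X_{\sigma}^{g}}(T_{\Delta} \cap X_{\sigma}^g)
\]
for each $t$, where all the combinatorial data $\nu(\Delta)$, $d_i^\Delta$, $K_i^\Delta$ a priori depends on $t$ through $f_t$, while the Euler obstructions $\rm{Eu}_{X_{\sigma}^{g}}(T_\Delta\cap X_\sigma^g)$ and the index sets $I(\Delta)$, $m(\Delta)$ depend only on $g$.

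The core of the argument is then the observation, already recorded in the paragraph preceding the corollary, that condition~\eqref{condition} forces $\Gamma_+(f_t)=\Gamma_+(f)$ for all $t\in\mathbb{C}$. I would spell out why this equality of Newton polygons is enough to conclude equality of the whole summand-by-summand expression. Since $f_t = f + \sum_i \theta_i(t) h_i$ with $\Gamma_+(h_i)\subset\Gamma_+(f)$, every monomial of $f_t$ lies in $\Gamma_+(f)$, so $\Gamma_+(f_t)\subset\Gamma_+(f)$; the extra transversality hypothesis $\gamma_{i_l}^\Delta\cap\beta_j^\Delta=\emptyset$ guarantees that no deformation term can cancel a vertex or $(\dim\Delta-1)$-face of $\Gamma_+(f)$ lying on a compact face, hence $\Gamma_+(f_t)=\Gamma_+(f)$, and moreover the compact faces $\beta_j^\Delta$ of $\Gamma_+(f_t)\cap\Delta$ of codimension one coincide with those of $\Gamma_+(f)\cap\Delta$. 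Consequently, for each face $\Delta$ with $\Gamma_+(f)\cap\Delta\neq\emptyset$: the set of faces indexed over is the same; the Newton polygon $\Gamma_+(f_{t,\Delta})=\big(\prod_{j\in I(\Delta)}\Gamma_+(f_j)\big)+\Gamma_+(f_t)$ equals the one built from $f$; therefore the number $\nu(\Delta)$ of its codimension-one compact faces, the primitive covectors $u_i^\Delta$, the values $d_i^\Delta$, and the faces $\gamma(f_j)_i^\Delta$ (hence the mixed volumes $K_i^\Delta$) are all independent of $t$. Plugging this into the displayed formula shows the right-hand side does not depend on $t$, so ${\rm B}_{f_t,X_\sigma^g}(0)$ is constant.

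The main obstacle — and the only place the argument is not purely formal — is justifying carefully that condition~\eqref{condition} really does imply $\Gamma_+(f_t)=\Gamma_+(f)$ together with the stronger statement that the relevant compact faces are unchanged, including the boundary case where some $\theta_i(t)$ cancels part of a coefficient. I would handle this by working face by face: for a compact face $\beta$ of $\Gamma_+(f)$, the restriction $(f_t)_\beta$ has the same monomial support as $f_\beta$ precisely because the hypothesis says the compact codimension-one faces of $\Gamma_+(h_i)\cap\Delta$ are disjoint from those of $\Gamma_+(f)\cap\Delta$, so $h_i$ contributes nothing on $\beta$ and the leading coefficients of $f_t$ along $\beta$ equal those of $f$; thus $\beta$ remains a compact face of $\Gamma_+(f_t)$ and no new compact faces appear. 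This is essentially the content already asserted (without full proof) in the paragraph introducing~\eqref{condition}, so in the write-up I would cite that discussion and keep the proof of the corollary short: invoke Theorem~\ref{BrasseletIC} for each $t$, invoke $\Gamma_+(f_t)=\Gamma_+(f)$ to see every term in the sum is $t$-independent, and conclude.
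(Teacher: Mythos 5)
Your proposal is correct and follows essentially the same route as the paper: apply Theorem~\ref{BrasseletIC} to each member of the family and observe that condition~\eqref{condition} forces $\Gamma_{+}(f_t)=\Gamma_{+}(f)$, hence $\Gamma_{+}\bigl((f_t)_{\Delta}\bigr)=\Gamma_{+}(f_{\Delta})$, so all the combinatorial data $\nu(\Delta)$, $u_i^{\Delta}$, $d_i^{\Delta}$, $K_i^{\Delta}$ in the formula is independent of $t$. Your write-up is in fact more explicit than the paper's (which simply notes the equality of the Newton polygons of the products and invokes the theorem), but the underlying argument is identical.
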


\begin{proof}
	As we have already noted, for each face $\Delta \prec \check{\sigma}$ 
	such that $\Gamma_{+}(f) \cap \Delta \neq \emptyset$, 
	the Newton polygon $\Gamma_{+}(f_{\Delta})$ of the function 
	\begin{equation*}
	f_{\Delta} = \Bl{\prod_{j\in I(\Delta)}} f_j\Br\cdot f \in 
	\mathbb{C}[S_{\sigma}]
	\end{equation*} 
	is $ \left\{\sum_{j \in I(\Delta)} 
	\Gamma_{+}(f_j) \right\} + \Gamma_{+}(f) \subset \check{\sigma}$. 
	Therefore, $\Gamma_{+}(f_\Delta) = \Gamma_{+}(({f_t})_\Delta)$, for all 
	$t \in \mathbb{C}$, where 
	\begin{equation*}
	(f_{t})_{\Delta} = \Bl\displaystyle{\prod_{j\in I(\Delta)}} f_j\Br\cdot 
	f_t,
	\end{equation*} since 
	$\Gamma_{+}(f) = \Gamma_{+}(f_t)$. Then the result follows by Theorem 
	\ref{BrasseletIC}.
\end{proof}

Roughly speaking the Brasselet number depends only on the 
monomials of smallest degree in each variable.

\medskip

Given $f: (X_{\sigma},0) \to (\mathbb{C},0)$ a non-degenerate function, as 
before, we denote by $\mathcal{T}_{f}$ the decomposition 
$$X_{\sigma}^{f}= \bigsqcup_{\Delta \prec \check{\sigma}} T_{\Delta} \cap 
X_{\sigma}^{f}$$ of $X_{\sigma}^{f}$, and by $\mathcal{T}^{f}$ the 
decomposition $\left\{ T_{\Delta} \cap X_{\sigma}^{f}, \ \  T_{\Delta} 
\setminus X_{\sigma}^{f}, \ \ \left\{0\right\} 					
\right\}_{\Delta \prec \check{\sigma}}$ of $X_{\sigma}$. If $\mathcal{T}_{f}$ 
is a Whitney stratification of $X_{\sigma}^{f}$, then $\mathcal{T}^{f} $ is a 
Whitney stratification of $X_{\sigma}$ which is adapted to $X_{\sigma}^{f}$. 
In fact, this follows from the fact that $\mathcal{T}$ is a Whitney 
stratification of $X_{\sigma}$. Consequently, $\mathcal{T}^f$ is 
a 
good 		
stratification of $X_{\sigma}$ relative to $f$.

\medskip

Let $g$ and $f$ be non-degenerate polynomial functions on $X_{\sigma}$. In 
general we have no way to relate 
the local Euler obstructions  ${\rm Eu}_{X_{\sigma}^{g}}(T_{\Delta} \cap 
X_{\sigma}^{g})$ 
to  ${\rm Eu}_{X_{\sigma}}(T_{\Delta})$. However, if we assume 
the 
additional hypothesis that $g$ has an isolated critical point at $0$ both in 
$X_{\sigma}$ and in $X_{\sigma}^f$ (in stratified sense), the 
following result holds.

\begin{theorem}\label{BrasseletIC4}
	Let $X_{\sigma} \subset \mathbb{C}^n$ be a $d$-dimensional toric variety 
	and $(g,f): (X_{\sigma},0) \to (\mathbb{C}^2,0)$ a non-degenerate complete intersection. Suppose that $\mathcal{T}^{f}$ is a good stratification   		of $X_{\sigma}$ relative to $f$ and that $g$ is prepolar with respect to $\mathcal{T}^{f}$, then
	\begin{equation*}
	{\rm B}_{f,X_{\sigma}^{g}}(0)   =  \sum_{{\Gamma_{+} 
	(f) \cap \Delta \neq \emptyset} \atop {\dim \Delta \ \geq \ 2}} 
	(-1)^{\dim\Delta \ - \ {2}} \left( \sum_{i=1}^{\nu(\Delta)} 
	d_{i}^{\Delta} \cdot K_{i}^{\Delta} \right)\cdot 
	\rm{Eu_{X_{\sigma}}}(T_{\Delta}). 
	\end{equation*}
\end{theorem}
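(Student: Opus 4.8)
The plan is to reduce Theorem~\ref{BrasseletIC4} to Theorem~\ref{BrasseletIC} by proving that, under the prepolarity hypothesis, the two local Euler obstructions ${\rm Eu}_{X_\sigma^g}(T_\Delta\cap X_\sigma^g)$ and ${\rm Eu}_{X_\sigma}(T_\Delta)$ agree for every face $\Delta\prec\check\sigma$ with $\Gamma_+(f)\cap\Delta\neq\emptyset$ and $\dim\Delta\geq 2$ (note that $m(\Delta)=1$ here since $k=2$ and $I(\Delta)=\emptyset$, so $\dim\Delta\geq m(\Delta)$ becomes $\dim\Delta\geq 1$; the summand for $\dim\Delta=1$ contributes $K_i^\Delta=1$ but $d_i^\Delta\cdot 1$ times a sign, and one must check it either vanishes or is absorbed — I would double-check the $\dim\Delta=1$ strata separately, as in the statement the sum runs over $\dim\Delta\geq 2$). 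The combinatorial factor $\sum_i d_i^\Delta K_i^\Delta$ is intrinsic to $\Gamma_+(f)$ and $\Delta$, so it is literally the same in both formulas; only the Euler obstruction needs to be matched.

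First I would set $\mathcal{V}=\mathcal{T}^f$, which by the preceding paragraph in the text is a good stratification of $X_\sigma$ relative to $f$, with good strata $T_\Delta\cap X_\sigma^f$. Since $g$ is prepolar with respect to $\mathcal{V}$, the origin is a stratified isolated critical point of $g$, and $X_\sigma^g$ meets every stratum of $\mathcal{V}$ transversally away from $0$. The key geometric point is then that for a stratum $T_\Delta$ of $X_\sigma$, the transversality of $X_\sigma^g$ to $T_\Delta$ near a generic point means that locally $T_\Delta\cap X_\sigma^g$ is a hyperplane section of $T_\Delta$ by a generic (with respect to the stratified setting) hypersurface $g^{-1}(0)$. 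I would invoke the behaviour of the local Euler obstruction under generic hyperplane/hypersurface sections: by the results of Lê--Teissier \cite{LT} (and the reformulation via \cite{BLS}), for a complex analytic germ $Z$ and a generic hyperplane $H$ through a point $z$, one has ${\rm Eu}_{Z\cap H}(z) = {\rm Eu}_Z(z)$ when $\dim Z \geq 2$ at $z$; more precisely the relevant statement is that the Euler obstruction is preserved by a generic hyperplane slice in the sense that ${\rm Eu}_{Z}(z) = {\rm Eu}_{Z\cap H}(z)$ for $H$ generic. Applying this with $Z = \overline{T_\Delta}$ (or the germ of $X_\sigma$ along $T_\Delta$) and $H$ the germ of $X_\sigma^g = X_\sigma\cap g^{-1}(0)$ — justified because prepolarity forces $g^{-1}(0)$ to behave as a generic slice relative to the stratification $\mathcal{T}^f$ — gives ${\rm Eu}_{X_\sigma^g}(T_\Delta\cap X_\sigma^g) = {\rm Eu}_{X_\sigma}(T_\Delta)$ for $\dim\Delta\geq 2$.

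Substituting this identity into the formula of Theorem~\ref{BrasseletIC} (applied to the non-degenerate complete intersection $(g,f)$, whose hypothesis $\mathcal{T}_g$ Whitney I would need to recover — here $g$ is a single function, $\mathcal{T}_g = \mathcal{T}_f$ essentially, and prepolarity together with $\mathcal{T}^f$ being a good stratification should give what is needed; alternatively one re-runs the proof of Theorem~\ref{BrasseletIC} directly with $\mathcal{V}=\mathcal{T}^f$ as the good stratification of $X_\sigma$, refining it to a good stratification adapted to $(X_\sigma^g)^f$) immediately yields the claimed expression, with the $\dim\Delta\geq m(\Delta)=1$ range trimmed to $\dim\Delta\geq 2$ once the one-dimensional faces are accounted for.

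The main obstacle is the rigorous justification that $g^{-1}(0)$ acts as a ``generic'' slice so that the hyperplane-section invariance of the local Euler obstruction applies: prepolarity gives transversality to the strata of $\mathcal{T}^f$ but the Lê--Teissier slicing theorem is usually stated for a generic linear form, so I would need to argue either (i) that the local Euler obstruction of $X_\sigma$ along $T_\Delta$ depends only on the analytic type of the germ and that transversality to all strata of a Whitney stratification is exactly the condition ensuring ${\rm Eu}$ is unchanged (this is the content of \cite{BLS} combined with \cite{LT}: a hyperplane transverse to the Whitney strata slices the Euler obstruction predictably, and for a hypersurface $g^{-1}(0)$ transverse to the strata with $g$ having an isolated stratified critical point one reduces to the linear case by a standard deformation/genericity argument), or (ii) invoke directly Dutertre--Grulha's machinery \cite{NN} relating ${\rm B}_{f,X^g}$ to ${\rm B}_{f,X}$ via the Lê--Greuel type formula (Theorem~\ref{Le Greuel}) — but that would change the shape of the answer, so route (i) is preferable. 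I expect the bulk of the work, and the one subtle point a referee would probe, to be this genericity reduction; everything else is a direct substitution into Theorem~\ref{BrasseletIC}.
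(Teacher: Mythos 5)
Your overall strategy --- rewrite ${\rm B}_{f,X_{\sigma}^{g}}(0)$ as in Theorem \ref{BrasseletIC} and then replace ${\rm Eu}_{X_{\sigma}^{g}}(T_{\Delta}\cap X_{\sigma}^{g})$ by ${\rm Eu}_{X_{\sigma}}(T_{\Delta})$ using the transversality supplied by prepolarity --- is exactly the paper's. But two points you leave open are genuine gaps. First, your claim that $I(\Delta)=\emptyset$, hence $m(\Delta)=1$, is wrong, and it matters: if $I(\Delta)$ were empty the quantities $K_{i}^{\Delta}$ and $d_{i}^{\Delta}$ would be computed from $\Gamma_{+}(f)$ alone (giving the formula of Proposition \ref{propositionBrasselet} for ${\rm B}_{f,X_{\sigma}}$), whereas in the stated formula they are built from $\Gamma_{+}(f_{\Delta})=\Gamma_{+}(g)+\Gamma_{+}(f)$; so the combinatorial factor is \emph{not} ``intrinsic to $\Gamma_{+}(f)$ and $\Delta$''. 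The resolution is that prepolarity forces $\Gamma_{+}(g)\cap\Delta\neq\emptyset$ for every face with $\dim\Delta\geq 1$ (otherwise $g\equiv 0$ on $T_{\Delta}$, so $T_{\Delta}$ lies in the stratified critical locus of $g$ and accumulates at the origin, contradicting the isolated stratified critical point). Hence $I(\Delta)=\{1\}$ and $m(\Delta)=2$ for every relevant face, and the range $\dim\Delta\geq m(\Delta)$ coming from the Matsui--Takeuchi computation is automatically $\dim\Delta\geq 2$, with no one-dimensional leftovers to ``absorb''.

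Second, the Euler-obstruction identity does not need the L\^e--Teissier generic-hyperplane slicing theorem nor any genericity/deformation reduction: at a point $p\in T_{\Delta}\cap X_{\sigma}^{g}$ with $p\neq 0$, prepolarity gives that the smooth hypersurface $g^{-1}(0)$ is transverse to the strata of $\mathcal{T}$ near $p$, and the local Euler obstruction is invariant under transverse intersection with a smooth variety --- this is Dubson's Proposition IV.4.1.1, which is what the paper cites. Appealing instead to the slicing theorem (a statement about generic hyperplanes through the point) is the wrong tool and is precisely what creates the ``genericity reduction'' you flag as the main obstacle; the correct citation removes that obstacle entirely. One further remark in favor of your alternative route: the paper's proof does not assume $\mathcal{T}_{g}$ is Whitney --- it refines $\mathcal{T}_{g}$ to a good stratification and uses Dubson's result in place of the constancy of ${\rm Eu}_{X_{\sigma}^{g}}$ along $T_{\Delta}\cap X_{\sigma}^{g}$ --- so re-running the argument of Theorem \ref{BrasseletIC} rather than quoting its statement is indeed the right move.
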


\begin{proof} Consider $\mathcal{T}_{g} = \left\{ T_{\Delta} \cap 
X_{\sigma}^{g} \right\}_{\Delta \prec \check{\sigma}}$ the stratification 
of 	
$X_{\sigma}^{g}$. Using the same argument as in the proof of Theorem 
\ref{BrasseletIC} we can 
	pass to a refinement $\mathcal{T}_{(g,f)}$ and we obtain a good 
	stratification of 
	$X_{\sigma}^{g}$ relative to $f$. Then 
	\begin{equation*}
				{\rm B}_{f,X_{\sigma}^{g}}(0)   =  \sum_{\Gamma_{+}(f)\cap 
				\Delta \neq \emptyset}\chi \big(W_{\Delta}\cap B_{\varepsilon} 
				\cap 										f^{-1}(\delta) 
				\big) 
				\cdot
				{\rm Eu}_{X_{\sigma}^{g}}(W_{\Delta}),
				\end{equation*}
	where $W_{\Delta}$ are the strata of $\mathcal{T}_{(g,f)}$ which are not 
	contained in $\left\{f = 
	0 \right\}$, and $0 < \left|\delta\right| \ll \varepsilon \ll 1$. Moreover,
	for $\Delta \prec \check{\sigma}$, we have ${\rm 
	Eu}_{X_{\sigma}}(T_{\Delta}) = {\rm Eu}_{X_{\sigma}^{g}}(W_{\Delta})$, 
	since $X_{\sigma}^g$ intersects the strata of $\mathcal{T}^f$ transversally 
	(see \cite[Proposition $IV$. $4.1.1$]{Dubson}). Hence,
		\begin{equation*}
		\begin{aligned}
		&   {\rm B}_{f,X_{\sigma}^{g}}(0)  & =  &    \displaystyle { \sum_{\Gamma_{+}(f)\cap \Delta \neq \emptyset} }\chi \big(X_{\sigma}^{g} \cap 						T_{\Delta} \cap 
		B_{\varepsilon} \cap f^{-1}(\delta) \big) \cdot
		{\rm Eu}_{X_{\sigma}}(T_{\Delta}).  &
		\end{aligned}
		\end{equation*} 
	Finally, as $\Gamma_{+}(g) \cap \Delta \neq \emptyset$ for any 
	face $0\precneqq \Delta \prec \check{\sigma}$, then $m(\Delta) = 2$, for 
	all face $\Delta \prec \check{\sigma}$ such that $\Gamma_{+}(f) 
	\cap \Delta \neq \emptyset$. Since $g$ is prepolar, 
	\begin{equation*}
	X_{\sigma}^{g}\cap 
	B_\varepsilon \cap f^{-1}(\delta)  = 
	\bigsqcup_{\Delta \prec 
		\check{\sigma}} T_{\Delta} \cap B_\varepsilon\cap X_{\sigma}^{g} \cap 
	f^{-1}(\delta)
	\end{equation*} 
	is a 
	Whitney stratification of the Milnor fiber $X_{\sigma}^{g}\cap 
	B_\varepsilon \cap 
	f^{-1}(\delta)$. Therefore, we obtain the result by \Eqref{nearbycicle1} 
	and \eqref{nearbycicle2}.
\end{proof}

Therefore, if $\mathcal{T}^f$ is a good stratification of $X_{\sigma}$ relative to $f$ and $g$ is prepolar with respect to $\mathcal{T}^f$, we can obtain a more general version of Corollary \ref{BrasseletIC2}, since we can relate 
the local Euler obstructions  ${\rm Eu}_{X_{\sigma}^{g}}(T_{\Delta} \cap 
X_{\sigma}^{g})$ to the 
Euler local obstructions ${\rm Eu}_{X_{\sigma}}(T_{\Delta})$.

\begin{corollary}\label{BrasseletIC8}
	Let $X_{\sigma} \subset \mathbb{C}^n$ be a $d$-dimensional toric variety 
	and $(g,f): (X_{\sigma},0) \to (\mathbb{C}^2,0)$ a non-degenerate 
	complete intersection. Suppose that 
	\begin{equation*}
	\Bl g_s(x),f_t(x) \Br = \Bl g(x) + 
	\sum_{i=1}^{m}\xi_{i}(s)\cdot l_i(x), f(x)+\sum_{j=1}^{r} 
	\theta_{j}(t)\cdot h_j(x) \Br	 
	\end{equation*}
	is a family of non-degenerate complete 
	intersections with $l_i$ and $h_j$ satisfying the condition 
	\eqref{condition} for all $i=1,\dots,m$ and $j=1,\dots,r$. If 
	$\mathcal{T}^{f_t}$ is a good stratification   		of $X_{\sigma}$ 
	relative to $f_t$ and $g_s$ is prepolar with respect to 
	$\mathcal{T}^{f_t}$ at the origin, for all $s,t \in \mathbb{C}$,
	then, ${\rm B}_{f_t,X_{\sigma}^{g_s}}(0)$ is constant for all 
	$t,s \in \mathbb{C}$.
\end{corollary}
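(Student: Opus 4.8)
The plan is to combine Theorem~\ref{BrasseletIC4} with the invariance argument already used in Corollary~\ref{BrasseletIC2}. First I would observe that, by hypothesis, for every $t,s\in\mathbb{C}$ the pair $(g_s,f_t)$ is a non-degenerate complete intersection, $\mathcal{T}^{f_t}$ is a good stratification of $X_\sigma$ relative to $f_t$, and $g_s$ is prepolar with respect to $\mathcal{T}^{f_t}$. Thus Theorem~\ref{BrasseletIC4} applies for each member of the family and gives
\begin{equation*}
{\rm B}_{f_t,X_{\sigma}^{g_s}}(0) = \sum_{{\Gamma_{+}(f_t)\cap\Delta\neq\emptyset}\atop{\dim\Delta\ \geq\ 2}} (-1)^{\dim\Delta-2}\left(\sum_{i=1}^{\nu(\Delta)} d_i^{\Delta}\cdot K_i^{\Delta}\right)\cdot\rm{Eu}_{X_\sigma}(T_\Delta).
\end{equation*}
The crucial point is that the right-hand side is expressed purely in terms of Newton data of $f_t$ (the faces $\gamma_i^{\Delta}$, the integers $d_i^{\Delta}$, the mixed volumes $K_i^{\Delta}$) together with the Euler obstructions ${\rm Eu}_{X_\sigma}(T_\Delta)$, which depend only on $X_\sigma$ and not on $t$ or $s$.

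Next I would invoke the condition \eqref{condition}: since each $h_j$ satisfies $\Gamma_+(h_j)\subsetneqq\Gamma_+(f)$, the discussion preceding Corollary~\ref{BrasseletIC2} shows that $\Gamma_+(f_t)=\Gamma_+(f)$ for all $t\in\mathbb{C}$. Hence the index set $\{\Delta\prec\check\sigma : \Gamma_+(f_t)\cap\Delta\neq\emptyset,\ \dim\Delta\geq 2\}$ is independent of $t$, and for each such $\Delta$ the compact faces $\beta_i^{\Delta}$ of $\Gamma_+(f_t)\cap\Delta$ of dimension $\dim\Delta-1$, the primitive covectors $u_i^{\Delta}$, the numbers $d_i^{\Delta}$, and the face decompositions $\gamma(f_j)_i^{\Delta}=\Gamma(f_j|_\Delta;u_i^{\Delta})$ all coincide with those attached to $f$. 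Since $m(\Delta)=2$ for every $\Delta$ with $0\precneqq\Delta\prec\check\sigma$ (because $\Gamma_+(g_s)\cap\Delta\neq\emptyset$, the support of $g_s$ containing no monomial at the origin), the mixed volumes $K_i^{\Delta}$ built from $\gamma(f_j)_i^{\Delta}$ are likewise unchanged when $f$ is replaced by $f_t$. Therefore every term in the displayed sum is independent of $t$, and it is manifestly independent of $s$ as well, so ${\rm B}_{f_t,X_\sigma^{g_s}}(0)$ is constant on $\mathbb{C}^2$.

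The main obstacle is really a bookkeeping one: I must make sure that the hypotheses of Theorem~\ref{BrasseletIC4} genuinely hold for every $(g_s,f_t)$, not merely that each of $g_s$ and $f_t$ behaves well individually. This is supplied by the assumption in the statement that $\mathcal{T}^{f_t}$ is a good stratification of $X_\sigma$ relative to $f_t$ and that $g_s$ is prepolar with respect to $\mathcal{T}^{f_t}$ for all $s,t$, so no further work is needed there; the genuine content is the combinatorial invariance $\Gamma_+(f_t)=\Gamma_+(f)$, which is exactly what \eqref{condition} was designed to guarantee. One subtlety worth a sentence: although $\Gamma_+(g_s)$ may change with $s$, it never loses the property $\Gamma_+(g_s)\cap\Delta\neq\emptyset$ for $\Delta\neq 0$ (equivalently $0\notin\supp g_s$, which holds since $(g_s,f_t)$ is origin-preserving), so $m(\Delta)=2$ uniformly and $g_s$ does not enter the formula at all beyond fixing $m(\Delta)$. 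With these observations the conclusion follows immediately from Theorem~\ref{BrasseletIC4}.
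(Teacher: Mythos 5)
Your overall strategy -- apply the combinatorial formula of Theorem~\ref{BrasseletIC4} to each member $(g_s,f_t)$ of the family and then argue that every term in the resulting sum is independent of $(s,t)$ -- is exactly the paper's, but your justification of the $s$-independence contains a genuine error. You assert that ``$g_s$ does not enter the formula at all beyond fixing $m(\Delta)=2$,'' and accordingly you allow $\Gamma_+(g_s)$ to vary with $s$. This is false: the quantities $u_i^{\Delta}$, $\nu(\Delta)$, $d_i^{\Delta}$ and $K_i^{\Delta}$ appearing in Theorem~\ref{BrasseletIC4} are all built from the Newton polygon $\Gamma_+(f_{\Delta})=\Gamma_+(g)+\Gamma_+(f)$ of the \emph{product} $g\cdot f$, not from $\Gamma_+(f)$ alone. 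The primitive covectors $u_i^{\Delta}$ are those attaining their minimum on the $(\dim\Delta-1)$-dimensional compact faces of $\bigl(\Gamma_+(g)+\Gamma_+(f)\bigr)\cap\Delta$, and each $K_i^{\Delta}$ is a mixed volume whose arguments include the supporting face $\gamma(g)_i^{\Delta}=\Gamma(g|_{\Delta};u_i^{\Delta})$ of $\Gamma_+(g)$. (Example~\ref{examplemorse} makes this explicit: the faces $\gamma(g)_1^{\Delta_3}$ and $\gamma(g)_2^{\Delta_3}$ of $\Gamma_+(g)$ enter the computation of $K_i^{\Delta_3}$, and the $u_i^{\Delta_3}$ are read off from $\Gamma_+(g\cdot f)$.) If $\Gamma_+(g_s)$ really could change with $s$, the value of the formula could change with it, and your argument would break. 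You also conflate the two setups by speaking of the faces $\beta_i^{\Delta}$ of $\Gamma_+(f_t)\cap\Delta$, which belong to the $k=1$ case of Proposition~\ref{propositionBrasselet}, not to the complete-intersection formula you are invoking.

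The repair is exactly the hypothesis you dismissed as a side remark: condition~\eqref{condition} is imposed on the $l_i$ as well as on the $h_j$, which forces $\Gamma_+(g_s)=\Gamma_+(g)$ for all $s$ in addition to $\Gamma_+(f_t)=\Gamma_+(f)$ for all $t$. Consequently $\Gamma_+\bigl((f_t)^s_{\Delta}\bigr)=\Gamma_+(g_s)+\Gamma_+(f_t)=\Gamma_+(g)+\Gamma_+(f)=\Gamma_+(f_{\Delta})$ for every relevant face $\Delta$, so the covectors $u_i^{\Delta}$, the integers $d_i^{\Delta}$, the supporting faces $\gamma(g_s)_i^{\Delta}$ and $\gamma(f_t)_i^{\Delta}$, and hence the mixed volumes $K_i^{\Delta}$, all coincide with those of $(g,f)$; this is precisely how the paper argues (via the constancy of $\chi\bigl(X_{\sigma}^{g_s}\cap T_{\Delta}\cap B_{\varepsilon}\cap f_t^{-1}(\delta)\bigr)$ through equations~\eqref{nearbycicle1} and~\eqref{nearbycicle2}). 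The remaining ingredient, ${\rm Eu}_{X_{\sigma}^{g_s}}(T_{\Delta}\cap X_{\sigma}^{g_s})={\rm Eu}_{X_{\sigma}}(T_{\Delta})$ from the prepolarity of $g_s$, you use correctly.
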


\begin{proof}
  Since $\Gamma_{+}(l_i) \subset \Gamma_{+}(g) \ \ \text{and} \ \ 
  \Gamma_{+}(h_j) \subset \Gamma_{+}(f)$, for each face $\Delta \prec 
  \check{\sigma}$   such that $\Gamma_{+}(f) \cap \Delta \neq \emptyset$, the 
  Newton polygon $\Gamma_{+}(f_{\Delta})$ of the function $$f_{\Delta} = 
  \Bl      	   \displaystyle{\prod_{\Gamma_{+}(f) \cap \Delta \neq \emptyset}} 
  g \Br \cdot f \in \mathbb{C}[S_{\sigma}]$$ is equals to   
  $\Gamma_{+}(({f_{t})}^{s}_{\Delta} )$, where $${(f_{t})}^{s}_{\Delta} = \Bl 
  \displaystyle{\prod_{\Gamma_{+}(f_t) \cap \Delta \neq \emptyset}} g_s \Br 
  \cdot f_t \in \mathbb{C}[S_{\sigma}].$$ Then by \Eqref{nearbycicle1} and 
  \eqref{nearbycicle2}, we can conclude 
  that the Euler characteristic 
 	$$
  	\chi \Bl X_{\sigma}^{g_s} \cap T_{\Delta} \cap 
	B_{\varepsilon} \cap f_{t}^{-1}(\delta) \Br
	$$ 
	is constant for all $s,t \in \mathbb{C}$. Moreover, as $g_s$ is prepolar 
	with respect to $\mathcal{T}^{f_t}$, then
	we can proceed exactly in the same way as in Theorem 
	\ref{BrasseletIC4} to obtain
	$$
	{\rm Eu}_{X_{\sigma}}(T_{\Delta}) = {\rm 
	Eu}_{X_{\sigma}^{g_s}}(T_{\Delta} \cap X_{\sigma}^{g_s}),
	$$ 
	and this concludes the proof.
\end{proof}

As a consequence of Proposition 
\ref{propositionBrasselet} and Theorem \ref{BrasseletIC4}, we have that if 
$\mathcal{T}^f$ 
is a good stratification of $X_{\sigma}$ relative to $f$ and if $g: 
X_{\sigma} \to 
\mathbb{C}$ is prepolar with respect to $\mathcal{T}^f$, we can express the 
number of stratified Morse critical points on the stratum of maximum dimension 
appearing in a morsefication of $g : X_{\sigma} \cap f^{-1}(\delta) \cap 
B_{\varepsilon} \to \mathbb{C}$ in terms of volumes of convex polytopes. 
More precisely, by Theorem \ref{Le Greuel}, we have 

\begin{equation*}
\begin{array}{lllll}

& (-1)^{d-1}n_d  &  = & \displaystyle{ \sum_{{\Gamma_{+} (f) \cap \Delta \neq 
\emptyset}}}(-1)^{\dim\Delta - 1} \big( 
\sum_{i=1}^{\mu(\Delta)}{\rm{Vol_{\mathbb{Z}}}}(\Gamma_{i}^{\Delta}) 
\big)\cdot 
\rm{Eu_{X_{\sigma}}}(T_{\Delta})  & \\

&   & - & \displaystyle \sum_{{\Gamma_{+} (f) \cap \Delta \neq \emptyset} 
\atop {\dim\Delta \geq 2}} (-1)^{\dim\Delta - {2}} \big( 
\sum_{i=1}^{\nu(\Delta)} d_{i}^{\Delta} \cdot K_{i}^{\Delta} \big)\cdot 
\rm{Eu_{X_{\sigma}}}(T_{\Delta}) & \\

\end{array}
\end{equation*}

\noindent where $n_d$ is the number of stratified Morse critical points on the top 
stratum $T_{\Delta_d} \cap f^{-1}(\delta)\cap B_{\varepsilon}$ appearing in a 
morsefication of $g : X_{\sigma} \cap f^{-1}(\delta) \cap B_{\varepsilon} \to 
\mathbb{C}$.
Therefore, if $f_t(x)= f(x)+\sum_{j=1}^{r} \theta_j(t) \cdot h_j(x)$ is a 
family of non-degenerate polynomial functions on $X_{\sigma}$ and if 
$(g_s,f_t): (X_{\sigma},0) \to (\mathbb{C}^2,0)$ is a family of non-degenerate 
complete intersections which satisfy the same hypotheses as Corollary 
\ref{BrasseletIC8}, then 							
	$$
	(-1)^{d-1}n_d = {\rm 
	B}_{f_t,X_{\sigma}}(0) - {\rm B}_{f_t,{X_{\sigma}^{g_s}}}(0)
	$$ 
	\noindent is constant for all $s,t \in \mathbb{C}$. More precisely, we 
	can state the following result.

\begin{corollary}\label{BrasseletIC5}
	Let $X_{\sigma} \subset \mathbb{C}^n$ be a $d$-dimensional toric variety 
	and $(g,f):(X_{\sigma},0) \to (\mathbb{C}^2,0)$ a non-degenerate complete 
	intersection. Suppose that 
\begin{equation*}
	\Bl g_s(x),f_t(x) \Br = \Bl g(x) + 
	\sum_{i=1}^{m}\xi_{i}(s)\cdot l_i(x), f(x)+\sum_{j=1}^{r} 
	\theta_{j}(t)\cdot h_j(x) \Br
\end{equation*}
	is a family of non-degenerate complete 
	intersections with $l_i$ and $h_j$ satisfying the condition 
	\eqref{condition} for all $i=1,\dots,m$ and $j=1,\dots,r$. If  
	$\mathcal{T}^{f_t}$ is a good stratification of $X_{\sigma}$ 
	relative to $f_t$ and $g_s$ is prepolar with respect to 
	$\mathcal{T}^{f_t}$ at the origin, for all $s,t \in \mathbb{C}$, 
	then 
	$(-1)^{d-1}n_d$ is constant for all $s, t \in \mathbb{C}$.
\end{corollary}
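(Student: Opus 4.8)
The plan is to combine the previous results so that both Brasselet numbers in the Lê–Greuel formula become constant along the family. First I would observe that the hypotheses are exactly those of Corollary \ref{BrasseletIC8}, so that result applies directly and gives the constancy of ${\rm B}_{f_t,X_{\sigma}^{g_s}}(0)$ for all $s,t\in\mathbb{C}$. The point I must additionally establish is that the \emph{ambient} Brasselet number ${\rm B}_{f_t,X_{\sigma}}(0)$ is also constant in $t$; since $g_s$ plays no role there, the parameter $s$ is irrelevant for this term. For this I would invoke Proposition \ref{propositionBrasselet}: it expresses ${\rm B}_{f_t,X_{\sigma}}(0)$ as a sum over faces $\Delta$ with $\Gamma_+(f_t)\cap\Delta\neq\emptyset$ of signs times normalized volumes ${\rm Vol}_{\mathbb{Z}}(\Gamma_i^{\Delta})$ of the cones over the top-dimensional compact faces, weighted by ${\rm Eu}_{X_{\sigma}}(T_{\Delta})$. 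Because each $h_j$ satisfies condition \eqref{condition}, i.e. $\Gamma_+(h_j)\subsetneqq\Gamma_+(f)$, the discussion preceding Corollary \ref{BrasseletIC2} shows $\Gamma_+(f_t)=\Gamma_+(f)$ for all $t$; hence the set of relevant faces $\Delta$, the compact faces $\beta_i^{\Delta}$ of $\Gamma_+(f_t)\cap\Delta$ of dimension $\dim\Delta-1$, the cones $\Gamma_i^{\Delta}$, and the lattices $M(S_{\sigma}\cap\Delta)$ are all independent of $t$. Therefore every summand in Proposition \ref{propositionBrasselet} is $t$-independent and so is ${\rm B}_{f_t,X_{\sigma}}(0)$.

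With both terms handled, I would then apply Theorem \ref{Le Greuel} (the Lê–Greuel formula) with $f=f_t$ and $g=g_s$: since $\mathcal{T}^{f_t}$ is a good stratification of $X_{\sigma}$ relative to $f_t$ and $g_s$ is prepolar with respect to it at the origin, the hypotheses of that theorem are met, giving
\[
{\rm B}_{f_t,X_{\sigma}}(0) - {\rm B}_{f_t,X_{\sigma}^{g_s}}(0) = (-1)^{d-1}n_d,
\]
where $n_d$ is the number of stratified Morse critical points on the top stratum $T_{\Delta_d}\cap f_t^{-1}(\delta)\cap B_{\varepsilon}$ appearing in a morsefication of $g_s\colon X_{\sigma}\cap f_t^{-1}(\delta)\cap B_{\varepsilon}\to\mathbb{C}$. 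The left-hand side is a difference of two quantities each of which we have just shown to be constant in $(s,t)$, so the right-hand side $(-1)^{d-1}n_d$ is constant for all $s,t\in\mathbb{C}$, which is the assertion.

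The main obstacle — and the step that needs the most care — is verifying that the hypotheses of Theorem \ref{Le Greuel} and of Corollary \ref{BrasseletIC8} are genuinely available for \emph{each} member of the family simultaneously, not just at $t=s=0$: one needs that $(g_s,f_t)$ is a non-degenerate complete intersection, that $\mathcal{T}^{f_t}$ is a good stratification relative to $f_t$, and that $g_s$ is prepolar with respect to $\mathcal{T}^{f_t}$ for \emph{all} $s,t$. These are precisely the standing assumptions of the corollary, so formally they are granted; the only thing to double-check is that equidimensionality of $X_{\sigma}$ (needed for Theorem \ref{Le Greuel}) holds, which it does since $X_{\sigma}$ is a $d$-dimensional toric variety, and that the good-stratification/prepolarity data for $f_t$ do not secretly depend on $t$ in a way that would change $n_d$ — but this is exactly what the constancy of the two Brasselet numbers circumvents, since we never need to track $n_d$ directly. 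Thus the argument reduces to assembling Proposition \ref{propositionBrasselet}, Corollary \ref{BrasseletIC8}, and Theorem \ref{Le Greuel}, with the face-combinatorics invariance $\Gamma_+(f_t)=\Gamma_+(f)$ doing the real work for the ambient term.
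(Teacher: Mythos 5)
Your argument is correct and follows the paper's own route exactly: the paper also derives this corollary from the Lê--Greuel formula (Theorem \ref{Le Greuel}) by combining the $t$-independence of ${\rm B}_{f_t,X_{\sigma}}(0)$ via Proposition \ref{propositionBrasselet} and $\Gamma_+(f_t)=\Gamma_+(f)$ with the constancy of ${\rm B}_{f_t,X_{\sigma}^{g_s}}(0)$ from Corollary \ref{BrasseletIC8}. The only point worth flagging is that Proposition \ref{propositionBrasselet} needs $f_t$ itself to be non-degenerate as a single function, an assumption the paper likewise adds implicitly in the discussion preceding the corollary.
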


We will present some applications of the results presented before in the next 
section and some examples in Section \ref{Section-ToricSurface}.


\section{The local Euler obstruction and Bruce-Roberts' Milnor number}
\TDchange{GSV index vai para a se\c c\~ao 5 junto com a parte final}

\LHchange{A introducao da secao esta estranha, talvez colocar na ordem em que 
\'e a secao.}

\TDchange{+ -}

In this section, we derive sufficient 
conditions to obtain the invariance of the local Euler obstruction for families 
of 
complete intersections  which are 
contained in $X_{\sigma} \subset \C^n$. As an application, we study the 
invariance of the Bruce-Roberts' Milnor number for families of functions 
defined on hypersurfaces.

\subsection{Local Euler obstruction of non-degenerate complete intersections}
 
As observed in \cite[Remark $2.5$]{BG} the local Euler obstruction is not a 
topological 
invariant. However, for non-degenerate complete intersections we have the following result.

\begin{theorem}\label{TheoBrasseletIC}
	Let  $X_{\sigma} \subset \mathbb{C}^n$ be a $d$-dimensional toric variety 
	and $k$ a positive integer in $\{2,\dots,d\}$. Consider 
	$g=(f_1,\dots,f_{k-1}): 
	(X_{\sigma},0) \to	(\mathbb{C}^{k-1},0)$ a non-degenerate complete 
	intersection, such that $\mathcal{T}_g$ is a Whitney stratification. 
	Suppose that
	\begin{equation*}
	g_s(x) = 		\Bl f_1(x) + \sum_{{i_1}=1}^{m_1} \theta_{i_1}(s) 
			\cdot h_{i_1}(x),\dots,f_{k-1}(x) + 
			\sum_{{i_{k-1}}=1}^{m_{k-1}} 					
			\theta_{i_{k-1}}(s) 
			\cdot h_{i_{k-1}}(x) \Br
	\end{equation*}
	is a family of non-degenerate complete intersections, such that 
	$\mathcal{T}_{g_s}$ is a Whitney stratification and $h_{i_p}$ satisfies 
	the condition \eqref{condition}
	for all $p \in \left\{1,\dots,k-1 \right\}$ and $i_p \in \left\{1, \dots, 
	m_p\right\}$. 
	If $L: \mathbb{C}^n 
	\to 	\mathbb{C}$ is a linear form  which is generic with respect to 
	$X_{\sigma}^{g_s}$ 
	for all $s \in \mathbb{C}$ and $(g_s,L)$ is a non-degenerate complete 
	intersection, then ${\rm Eu}_{X_{\sigma}^{g_s}}(0)$ is 
	invariant for the family $\{g_s\}_{s\in\C}$. 
\end{theorem}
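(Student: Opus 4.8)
The plan is to relate the local Euler obstruction $\mathrm{Eu}_{X_\sigma^{g_s}}(0)$ to the Brasselet number of a generic linear form and then invoke the constancy of Newton polygons provided by the condition \eqref{condition}. First I would recall that, for a generic linear form $L$, one has $\mathrm{B}_{L,X_\sigma^{g_s}}(0) = \mathrm{Eu}_{X_\sigma^{g_s}}(0)$; this follows from the discussion after \Eqref{EqBrasEul} (the case ``$f$ linear and generic'' giving $\mathrm{B}_{f,X}(0) = \mathrm{Eu}_X(0)$), since $L$ has a stratified isolated critical point at $0$ when it is generic with respect to the Whitney stratification $\mathcal{T}_{g_s}$. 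So it suffices to show that $\mathrm{B}_{L,X_\sigma^{g_s}}(0)$ is independent of $s$.

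Next I would apply Theorem \ref{BrasseletIC} to the non-degenerate complete intersection $(g_s, L): (X_\sigma,0) \to (\mathbb{C}^k,0)$ — which is a hypothesis — to write
\begin{equation*}
\mathrm{B}_{L,X_\sigma^{g_s}}(0) = \sum_{{\Gamma_{+}(L)\cap\Delta\neq\emptyset}\atop{\dim\Delta\geq m(\Delta)}} (-1)^{\dim\Delta - m(\Delta)}\Bl \sum_{i=1}^{\nu(\Delta)} d_i^\Delta \cdot K_i^\Delta \Br \cdot \mathrm{Eu}_{X_\sigma^{g_s}}(T_\Delta\cap X_\sigma^{g_s}).
\end{equation*}
Here $L$ being linear and generic, $\Gamma_+(L)$ (hence the faces $\Delta$ entering the sum, the indices $m(\Delta)$, $\nu(\Delta)$, the lattice distances $d_i^\Delta$ and the mixed volumes $K_i^\Delta$) depend only on $L$ and on the Newton polygons $\Gamma_+(f_1),\dots,\Gamma_+(f_{k-1})$ of the components of $g = g_0$. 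The key point is that by the condition \eqref{condition}, $\Gamma_+((g_s)_p) = \Gamma_+(f_p)$ for every component $p$ and every $s\in\mathbb{C}$, exactly as argued in the proof of Corollary \ref{BrasseletIC2}; consequently the combinatorial quantities $\Gamma_+((g_s)_\Delta)$, $\nu(\Delta)$, $d_i^\Delta$ and $K_i^\Delta$ are the same for all $s$. Thus the only potential $s$-dependence in the formula is through the Euler obstructions $\mathrm{Eu}_{X_\sigma^{g_s}}(T_\Delta\cap X_\sigma^{g_s})$.

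Finally I would handle those Euler obstruction terms. For the strata $T_\Delta$ with $\dim\Delta \geq m(\Delta)$ appearing in the sum, one has $\Gamma_+(f_p)\cap\Delta\neq\emptyset$ for all $p\in I(\Delta)$, so each component $(g_s)_p$ restricted near such a stratum is a genuine deformation whose Newton polygon is fixed; here I would argue, using Matsui--Takeuchi's description (\cite[Lemma 4.1]{MT1}) together with the fixed-Newton-polygon property, that $T_\Delta\cap X_\sigma^{g_s}$ is, up to a stratified-homeomorphism of a neighborhood of $0$ in $X_\sigma$ respecting the toric stratification, independent of $s$ — equivalently, that the pair $(X_\sigma^{g_s}, T_\Delta\cap X_\sigma^{g_s})$ is topologically trivial along the family — and hence $\mathrm{Eu}_{X_\sigma^{g_s}}(T_\Delta\cap X_\sigma^{g_s})$ is constant, since the local Euler obstruction of a point on a stratum is a stratified-topological invariant of the germ. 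Putting the three steps together, $\mathrm{B}_{L,X_\sigma^{g_s}}(0)$, and therefore $\mathrm{Eu}_{X_\sigma^{g_s}}(0)$, is independent of $s$.

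\emph{Main obstacle.} The delicate step is the last one: showing the Euler obstructions $\mathrm{Eu}_{X_\sigma^{g_s}}(T_\Delta\cap X_\sigma^{g_s})$ do not jump with $s$. The Newton polygons of the components being constant controls the topology of the Milnor fibers and the intersection-theoretic data, but the local Euler obstruction is not a topological invariant in general (as noted via \cite[Remark 2.5]{BG}), so I would need to exhibit a genuine family-trivialization — for instance via a stratified version of the Kouchnirenko/Varchenko non-degeneracy argument adapted to toric ambient spaces — rather than merely matching numerical invariants. If a direct trivialization is unavailable, the fallback is to express each $\mathrm{Eu}_{X_\sigma^{g_s}}(T_\Delta\cap X_\sigma^{g_s})$ itself through the combinatorial formula (applying Theorem \ref{BrasseletIC}, or its $k{-}1$-variable analogue, to the smaller complete intersection $g_s$ viewed inside the toric subvariety corresponding to $\Delta$) and then invoke constancy of Newton polygons one dimension down, setting up an induction on $\dim\Delta$.
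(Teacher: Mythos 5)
Your first two steps are exactly the paper's proof: the authors likewise write $\mathrm{Eu}_{X_{\sigma}^{g_s}}(0)={\rm B}_{L,X_{\sigma}^{g_s}}(0)$ for the generic linear form $L$, apply Theorem \ref{BrasseletIC} to the non-degenerate complete intersection $(g_s,L)$, and observe that condition \eqref{condition} forces $\Gamma_{+}(L_{\Delta})=\Gamma_{+}(L_{\Delta}^{s})$ for every face $\Delta$, so that all the combinatorial coefficients $\nu(\Delta)$, $d_i^{\Delta}$, $K_i^{\Delta}$ are independent of $s$. Up to that point your argument and theirs coincide.

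Where you diverge is that you explicitly isolate the remaining factor $\mathrm{Eu}_{X_{\sigma}^{g_s}}(T_{\Delta}\cap X_{\sigma}^{g_s})$ as the ``main obstacle,'' whereas the paper's proof stops after the Newton-polygon comparison and does not address the $s$-dependence of these Euler obstructions at all. You are right that this is the delicate point: the authors themselves emphasize, just before Theorem \ref{BrasseletIC4}, that in general one cannot relate $\mathrm{Eu}_{X_{\sigma}^{g}}(T_{\Delta}\cap X_{\sigma}^{g})$ to $\mathrm{Eu}_{X_{\sigma}}(T_{\Delta})$, and they invoke \cite[Remark 2.5]{BG} to recall that the local Euler obstruction is not a topological invariant. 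For that same reason your first proposed fix (a stratified topological trivialization of the family) cannot suffice on its own, as you yourself note. Your fallback --- expressing $\mathrm{Eu}_{X_{\sigma}^{g_s}}(T_{\Delta}\cap X_{\sigma}^{g_s})$ through the Matsui--Takeuchi combinatorial formulas for non-degenerate complete intersections along torus orbits, which depend only on the Newton polygons of the $f_p+\sum\theta_{i_p}h_{i_p}$ restricted to the faces, and inducting on $\dim\Delta$ --- is the correct way to close this, and is implicitly what makes the statement true; but as written it remains a sketch rather than a proof. In short: your proposal reproduces the paper's argument and, in addition, correctly flags (without fully resolving) a step that the published proof passes over in silence.
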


\begin{proof}
		As $L: \mathbb{C}^n \to 	\mathbb{C}$ is a linear form  which is generic with respect to 
		$X_{\sigma}^{g_s}$, for all $s \in \mathbb{C}$,
		\begin{equation*}
		{\rm Eu}_{X_{\sigma}^{g_s}}(0) = {\rm B}_{L,X_{\sigma}^{g_{s}}}(0).
		\end{equation*}
		Moreover, $T_\Delta\cap X^{g_{s}}_\sigma  \cap	 B_{\varepsilon}	 
		\cap L^{-1}(\delta) $ induces a Whitney
		stratification on the Milnor fiber $X^{g_{s}}_\sigma  \cap	 
		B_{\varepsilon}	 \cap L^{-1}(\delta)$. Therefore, the result 
		follows from Theorem \ref{BrasseletIC}, and from the fact that for all 
		face 
		$\Delta 
		\neq 			\left\{0\right\}$ of $\check{\sigma}$, the Newton 
		polygon $\Gamma_{+}(L_{\Delta})$ of the function 
		$$
		L_{\Delta} = \Bl \displaystyle{\prod_{j\in I(\Delta)}} f_j 
		\Br \cdot L \in \mathbb{C}[S_{\sigma}]
		$$
is equal to $\Gamma_{+}({L_{\Delta}^{s}})$. Here, 
		$$
		{L_{\Delta}^{s}} = \Bl \displaystyle{\prod_{p\in I(\Delta)}} f_p + 
		\sum_{{i_p}=1}^{m_p} \theta_{i_p} \cdot h_{i_p} \Br \cdot  L 	
		\in 									\mathbb{C}[S_{\sigma}].
		$$
\end{proof}

\begin{corollary}\label{ObstrucaoEulerIC}
	Let $S_{\sigma} = \mathbb{Z}_{+}^n$ and let $X_{\sigma} = \mathbb{C}^n$ 
	be 
	the smooth $n$-dimensional toric variety. Consider
	$g=(f_1,\dots,f_{k-1}): 					\mathbb{C}^n \to 
	\mathbb{C}^{k-1}$ a non-degenerate complete intersection with an isolated 
	singularity at $0$, where $2\leq k \leq n$. 
	Suppose that
			$$
			g_s(x) = 		\Bl f_1(x) + \sum_{{i_1}=1}^{m_1} \theta_{i_1}(s) 
			\cdot h_{i_1}(x),\dots,f_{k-1}(x) + 
			\sum_{{i_{k-1}}=1}^{m_{k-1}} 					\theta_{i_{k-1}}(s) 
			\cdot h_{i_{k-1}}(x) \Br
			$$ 
	is a family of non-degenerate complete intersections with an isolated 
	singularity at $0$, where $h_{i_p}$ satisfies the condition 
	\eqref{condition}
	for all $p \in \left\{1,\dots,k-1 \right\}$ and $i_p \in \left\{1, \dots, 
	m_p\right\}$. If $L: \mathbb{C}^n 
	\to 	\mathbb{C}$ is a linear form  which is generic with respect to 
	$X_{\sigma}^{g_s}$, 
	for all $s \in \mathbb{C}$ and $(g_s,L)$ is a non-degenerate complete 
	intersection, then ${\rm Eu}_{X_{\sigma}^{g_s}}(0)$ is 
	invariant for the family $\{g_s\}_{s\in\C}$.

\end{corollary}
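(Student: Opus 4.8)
The plan is to deduce Corollary \ref{ObstrucaoEulerIC} as a direct specialization of Theorem \ref{TheoBrasseletIC} to the case $X_\sigma = \mathbb{C}^n$, after checking that the extra hypothesis present there—namely that $\mathcal{T}_g$ and each $\mathcal{T}_{g_s}$ are Whitney stratifications—is automatic here because $g$ and $g_s$ have an isolated singularity at $0$. So the first step is to verify this Whitney condition. Since $g_s=(f_1+\sum_{i_1}\theta_{i_1}(s)h_{i_1},\dots,f_{k-1}+\sum_{i_{k-1}}\theta_{i_{k-1}}(s)h_{i_{k-1}})$ is a non-degenerate complete intersection with an isolated singularity at the origin, the variety $X_\sigma^{g_s}=\mathbb{C}^n\cap g_s^{-1}(0)$ has an isolated singular point at $0$, so its canonical decomposition $\mathcal{T}_{g_s}=\{(X_\sigma^{g_s})_{\mathrm{reg}},\,\{0\}\}$ (or, more precisely, the decomposition by $T$-orbits intersected with $X_\sigma^{g_s}$, which by \cite[Lemma 4.1]{MT1} refines into smooth pieces and agrees off $0$ with $(X_\sigma^{g_s})_{\mathrm{reg}}$) consists of a smooth open dense stratum and the origin. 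A decomposition of a variety with an isolated singularity into its regular part together with the singular point is always a Whitney stratification—there is only one pair of strata to check and the Whitney conditions (a) and (b) at an isolated singular point of a complex analytic set hold automatically (see e.g. the standard facts on Whitney stratifications). Hence $\mathcal{T}_{g_s}$ is a Whitney stratification for every $s\in\mathbb{C}$, and likewise $\mathcal{T}_g$ (the case $s=0$).

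The second step is to observe that the remaining hypotheses of Theorem \ref{TheoBrasseletIC} are exactly those assumed in the corollary: the $h_{i_p}$ satisfy condition \eqref{condition}, so $\Gamma_+((g_s)_j)=\Gamma_+(f_j)$ for each component and each $s$; the linear form $L$ is generic with respect to $X_\sigma^{g_s}$ for all $s$; and $(g_s,L)$ is a non-degenerate complete intersection. Nothing else is needed: $k$ lies in $\{2,\dots,n\}=\{2,\dots,d\}$ since $d=\dim\mathbb{C}^n=n$, and $X_\sigma=\mathbb{C}^n\subset\mathbb{C}^n$ is a $d$-dimensional toric variety with $S_\sigma=\mathbb{Z}_+^n$. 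Therefore all the hypotheses of Theorem \ref{TheoBrasseletIC} are met.

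The third step is simply to invoke Theorem \ref{TheoBrasseletIC}: under these hypotheses ${\rm Eu}_{X_\sigma^{g_s}}(0)$ is invariant for the family $\{g_s\}_{s\in\mathbb{C}}$. This is precisely the conclusion of the corollary, so the proof is complete. I expect the only genuine point requiring care to be the first step—confirming that an isolated singularity forces the orbit-wise decomposition $\mathcal{T}_{g_s}$ to be Whitney—and this is a standard fact; the rest is bookkeeping to match the hypotheses of the theorem already proved.

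\begin{proof}
	Since $X_{\sigma} = \mathbb{C}^n$ has dimension $d = n$ and $2 \leq k \leq
	n = d$, the integer $k$ lies in $\{2,\dots,d\}$. By hypothesis $g_s$ is,
	for each $s \in \mathbb{C}$, a non-degenerate complete intersection with an
	isolated singularity at the origin. Hence the variety
	$X_{\sigma}^{g_s} = \mathbb{C}^n \cap g_s^{-1}(0)$ is smooth away from the
	origin. By \cite[Lemma 4.1]{MT1}, the decomposition
	$\mathcal{T}_{g_s} = \bigsqcup_{\Delta \prec \check{\sigma}} T_{\Delta}
	\cap X_{\sigma}^{g_s}$ is a decomposition of $X_{\sigma}^{g_s}$ into smooth
	subvarieties, and since $X_{\sigma}^{g_s}$ has an isolated singular point at
	$0$, the union of all positive-dimensional strata is contained in the
	regular part $(X_{\sigma}^{g_s})_{\rm reg}$, while the origin is the only
	$0$-dimensional stratum. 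A decomposition of a complex analytic set into its
	regular part together with an isolated singular point always satisfies the
	Whitney conditions (a) and (b), so $\mathcal{T}_{g_s}$ is a Whitney
	stratification for every $s \in \mathbb{C}$; in particular
	$\mathcal{T}_g = \mathcal{T}_{g_0}$ is a Whitney stratification.

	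Moreover, by assumption the perturbations $h_{i_p}$ satisfy the condition
	\eqref{condition} for all $p \in \{1,\dots,k-1\}$ and
	$i_p \in \{1,\dots,m_p\}$, the linear form $L$ is generic with respect to
	$X_{\sigma}^{g_s}$ for all $s \in \mathbb{C}$, and $(g_s,L)$ is a
	non-degenerate complete intersection. Thus all the hypotheses of Theorem
	\ref{TheoBrasseletIC} are fulfilled, and that theorem yields that
	${\rm Eu}_{X_{\sigma}^{g_s}}(0)$ is invariant for the family
	$\{g_s\}_{s\in\mathbb{C}}$.
\end{proof}
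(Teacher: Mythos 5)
Your overall strategy coincides with the paper's: both proofs reduce the corollary to Theorem \ref{TheoBrasseletIC} by checking that the isolated-singularity hypothesis forces $\mathcal{T}_{g_s}$ to be a Whitney stratification, and the bookkeeping of the remaining hypotheses is the same. The conclusion of your first step is also what the paper asserts.

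However, the justification you give for that first step has a gap. The decomposition $\mathcal{T}_{g_s}=\bigsqcup_{\Delta\prec\check{\sigma}}T_{\Delta}\cap X_{\sigma}^{g_s}$ is \emph{not} the two-piece decomposition $\{(X_{\sigma}^{g_s})_{\mathrm{reg}},\{0\}\}$; for $X_{\sigma}=\mathbb{C}^n$ it has up to $2^n$ strata indexed by the coordinate tori. Knowing that the coarse decomposition into regular part and isolated singular point is Whitney does not imply that a strictly finer decomposition into smooth pieces is Whitney: the Whitney conditions must also be verified for incident pairs of \emph{positive-dimensional} orbit strata, e.g.\ $T_{\Delta'}\cap X_{\sigma}^{g_s}\subset\overline{T_{\Delta}\cap X_{\sigma}^{g_s}}$ with $\Delta'\precneqq\Delta$ both nonzero, and these pairs are not controlled by the smoothness of $X_{\sigma}^{g_s}\setminus\{0\}$ alone. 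What actually makes $\mathcal{T}_{g_s}$ Whitney is that the isolated (stratified) singularity hypothesis forces $X_{\sigma}^{g_s}$ to meet each stratum $T_{\Delta}$ of the Whitney stratification $\mathcal{T}$ of $X_{\sigma}$ transversally away from the origin, and a subvariety transverse to a Whitney stratification inherits a Whitney stratification by intersection with the strata. This is the content of the paper's remark that $\mathcal{T}_{g_s}$ is Whitney ``once $\mathcal{T}$ is a Whitney stratification of $X_{\sigma}$.'' Replacing your regular-part argument by this transversality argument closes the gap; everything after that point in your proposal is fine.
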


\begin{proof}
		As $g_s$ is a family of complete intersections with an isolated 
		singularity at $0$, the decomposition $\mathcal{T}_{g_{s}}$ of 
		$X_{\sigma}^{g_{s}} = 
		\bigsqcup_{\Delta \prec \check{\sigma}} T_{\Delta} \cap 
		X_{\sigma}^{g_{s}}$ is a Whitney stratification, once $\mathcal{T}$ 
		is a Whitney stratification of $X_{\sigma}$. Therefore, the result 
		follows from Theorem \ref{TheoBrasseletIC}.
\end{proof}

With the same assumptions as in Theorem \ref{TheoBrasseletIC}, suppose that $f_t(x) = 
f_k(x)+\sum_{{i_k}=1}^{m_k} \theta_{i_k}(t)\cdot h_{i_k}(x)$ is a family of 
polynomial functions such that $(g_s,f_t)$ is a family of non-degenerate 
complete 
intersections where $h_{i_k}$ satisfies the condition \eqref{condition},\ie
	\begin{equation*}
		\Gamma_{+}(h_{i_k}) \subsetneqq 
		\Gamma_{+}(f_k), \ \ \text{for all} \ \ i_k=1,\dots,m_k.
	\end{equation*}
Moreover, assume that $f_t: X_{\sigma}^{g_s} \to \mathbb{C}$ has a stratified 
isolated critical point at $0$. For each face $\Delta \prec \check{\sigma}$ 
satisfying $\Gamma_{+}(f_k) \cap \Delta \neq \emptyset$, the Newton polygon 
$\Gamma_{+}(f_{\Delta})$ of the function 
	$$
		f_{\Delta} = \big(\displaystyle{\prod_{j\in I(\Delta)}} f_j    
		\big)\cdot f_k \in \mathbb{C}[S_{\sigma}]
	$$
\noindent is equals to $\Gamma_{+}((f_{t})^{s}_{\Delta} )$, where 
\TDchange{****  conferir as trocas de indices e $\xi$ por $\theta$}
	$$
		(f_{t})^{s}_{\Delta} = \Bl \displaystyle{\prod_{p\in I(\Delta)}} f_p 
		+ 
		\sum_{{i_p}=1}^{m_p} \theta_{i_p} \cdot h_{i_p} \Br \cdot \Bl f_t 
		= 						f_k+\sum_{i_{k}=1}^{m_k} \theta_{i_k}\cdot 
		h_{i_k} \Br \in \mathbb{C}[S_{\sigma}].
	$$
\noindent By Theorem \ref{BrasseletIC} we conclude that the 
Euler characteristic of the Milnor fiber of $f_t: X_{\sigma}^{g_s} \to 
\mathbb{C}$ is invariant for all $s,t\in\C$. Therefore, ${\rm Eu}_{f_t, 
X_{\sigma}^{g_s}}(0)$ is invariant for the family.

\subsection{Bruce-Roberts' Milnor number}

In \cite{BR}, Bruce and Roberts introduced a Milnor number for 
functions germs on singular varieties. 

Let $X$ be a 
sufficiently small representative of the germ $(X,0)$ and let $I(X)$ denote 
the ideal in $\mathcal{O}_{n,0}$ consisting of the germs of functions 
vanishing on $X$. We say that two germs $f$ and $g$ in $\mathcal{O}_{n,0}$ 
are $\mathcal{R}_X$- equivalent if there exists a germ of diffeomorphism 
$\phi: (\mathbb{C}^n,0)\rightarrow (\mathbb{C}^n,0)$ such that $\phi(X)=X$ 
and $f\circ\phi=g$. Let $\theta_n$ denote the $\mathcal{O}_{n,0}$- module of 
germs of vector fields 
on $(\mathbb{C}^n,0).$ Each vector field $\xi\in\theta_n$ can be seen as a 
derivation $\xi:\mathcal{O}_{n,0}\rightarrow\mathcal{O}_{n,0}$. We denote by 
$\theta_X$ those vector fields that are tangent to $X$,\ie 
$$
\theta_X:=\bigsetdef{\xi\in\theta_n}{dg(\xi)=\xi g\in I(X), \forall g\in 
I(X)}.
$$

\begin{definition}
	Let $f$ be a function in $\mathcal{O}_{n,0}$ \LHchange{Acho que aqui est\'a faltando um be 
	alguma coisa} and let $df(\theta_X)$ be the ideal $\{\xi f: 
	\xi\in\theta_X\}$ in $\mathcal{O}_{n,0}$. The number 
	$$\mu_{BR}(X,f)=\dim_{\mathbb{C}}\frac{\mathcal{O}_{n,0}}{df(\theta_X)}$$
	is called the Bruce-Roberts number of $f$ with respect to $X$.
\end{definition}

We refer to \cite{BR} for more details about $\mu_{BR}(X,f)$. 
In particular, $\mu_{BR}(X,f)$ is finite
if and only if $f$ is $\mathcal{R}_X$-finitely determined.

An interesting open problem is to know whether the Bruce-Roberts number is a 
topological invariant or not. In \cite[Corollary $5.19$]{Grulha2, Grulha1} 
Grulha 
gave a partial answer. The author proved that, if $(X, 
0)$ is a hypersurface whose logarithmic characteristic variety ${\rm LC}(X)$ 
\cite[Definition $1.13$]{BR}\LHchange{colocar onde est\'a a definicao no artigo} \TDchange{ok}, is 
Cohen-Macaulay and if 
$f_t$ is a $C^0$- $R_X$-trivial deformation of $f$, then $\mu_{BR}(f_t,X)$ is 
constant. 

For any hypersurface $X$ the problem of ${\rm LC}(X)$ being Cohen-Macaulay 
remains open. When $X$ is a weighted homogeneous hypersurface with an isolated 
singularity, ${\rm LC}(X)$ is Cohen-Macaulay by \cite[Theorem $4.2$]{BBT2}.

Let us recall that $\mu(f)$ denotes the Milnor number \cite{M1} of a 
germ of an analytic function $f : (\mathbb{C}^n,0) \to (\mathbb{C}, 0)$ with 
an isolated critical point at the origin and it is defined as 
\[
\mu(f) = \dim_{\mathbb{C}} 
\frac{\mathcal{O}_{n,0}}{J (f )},
\] where $\mathcal{O}_{n,0}$ is the ring of germs of 
analytic functions at the origin, and $J(f )$ is the Jacobian ideal of $f$.

Using \cite[Theorem 4.2]{BBT2}, and assuming that $L$ satisfies the same 
hypotheses as in Corollary \ref{ObstrucaoEulerIC} we prove the following result.

\begin{proposition}\label{BruceRoberts}
	Let $S_{\sigma} = \mathbb{Z}_{+}^n$ and let $X_{\sigma} = \mathbb{C}^n$ 
	be the smooth $n$-dimensional toric variety. Consider 
	$(g,f):(X_{\sigma},0) \to (\mathbb{C}^2,0)$ a non-degenerate complete 
	intersection, and 
			$$
				\Bl g_s(x),f_t(x) \Br = \Bl g(x) + 
				\sum_{i=1}^{m}\xi_{i}(s)\cdot l_i(x), f(x)+\sum_{j=1}^{r} 
				\theta_{j}(t)\cdot h_j(x) \Br
			$$
	\noindent a family of non-degenerate complete intersections with $h_j$ 
	and $l_i$ satisfying the	condition \eqref{condition}. Suppose 
	that, 
	for all $s,t \in \mathbb{C}$,  $X_{\sigma}^{g_s} \subset \mathbb{C}^n$ is 
	a weighted homogeneous hypersurface with 	an		isolated singularity 
	at 
	the origin. If $f_t: \mathbb{C}^n \to \mathbb{C}$ is a polynomial 
	function 
	with an isolated singularity at the origin such that $f_t: 
	X_{\sigma}^{g_s} \to \mathbb{C}$ has also a stratified isolated 
	singularity at the origin, then $\mu_{BR}(f_t,X_{\sigma}^{g_s})$ is 
	constant to 		all $s,t \in \mathbb{C}$.
\end{proposition}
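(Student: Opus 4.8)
The plan is to combine the invariance of the Euler obstruction from Theorem \ref{TheoBrasseletIC} (or rather its proof) with the relationship, in the weighted homogeneous case, between the Bruce--Roberts number $\mu_{BR}(f_t, X_{\sigma}^{g_s})$ and the invariants $\mu(f_t)$, ${\rm Eu}_{X_{\sigma}^{g_s}}(0)$ and ${\rm Eu}_{f_t, X_{\sigma}^{g_s}}(0)$ (equivalently ${\rm B}_{f_t, X_{\sigma}^{g_s}}(0)$). Concretely, when $(X,0)\subset(\C^n,0)$ is a weighted homogeneous hypersurface with isolated singularity and $f$ has an isolated singularity both on $\C^n$ and (in the stratified sense) on $X$, there is a formula of the form
\begin{equation*}
\mu_{BR}(X,f) = \mu(f) + \mu(f|_X) - {\rm Eu}_{f,X}(0) + (\text{correction terms involving } {\rm Eu}_X(0)),
\end{equation*}
more precisely the identity relating $\mu_{BR}$, the Milnor number of $f$, the Milnor number of $f$ restricted to $X$, and the Euler obstruction of $f$ on $X$ that was established in the literature (e.g.\ the work of Nuño-Ballesteros--Oréfice-Okamoto--Tomazella, or Grulha \cite{Grulha2,Grulha1} together with \cite{BBT2}) for hypersurfaces with Cohen--Macaulay ${\rm LC}(X)$. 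So the first step is to invoke \cite[Theorem 4.2]{BBT2} to conclude ${\rm LC}(X_{\sigma}^{g_s})$ is Cohen--Macaulay for each $s$ (since $X_{\sigma}^{g_s}$ is weighted homogeneous with isolated singularity), and then write $\mu_{BR}(f_t, X_{\sigma}^{g_s})$ as an explicit combination of $\mu(f_t)$, $\mu(f_t|_{X_{\sigma}^{g_s}})$, ${\rm Eu}_{f_t, X_{\sigma}^{g_s}}(0)$ and ${\rm Eu}_{X_{\sigma}^{g_s}}(0)$.

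Once that algebraic identity is in place, the proof reduces to showing that each term on the right-hand side is constant in $(s,t)$. The term $\mu(f_t)$ is constant because $\Gamma_+(h_{i_k})\subsetneqq\Gamma_+(f_k)$ implies $\Gamma_+(f_t)=\Gamma_+(f)$ for all $t$, and for non-degenerate functions with isolated singularity on $\C^n$ the Milnor number is the Kouchnirenko number, a function only of the Newton polygon; hence $\mu(f_t)$ does not vary. The term ${\rm Eu}_{X_{\sigma}^{g_s}}(0)$ is constant by Theorem \ref{TheoBrasseletIC} (with $k=2$, $L$ a generic linear form): the hypotheses of Proposition \ref{BruceRoberts} guarantee $X_{\sigma}^{g_s}$ is a non-degenerate hypersurface with isolated singularity, so $\mathcal{T}_{g_s}$ is Whitney, and a generic linear $L$ with $(g_s,L)$ non-degenerate exists. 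The term ${\rm Eu}_{f_t, X_{\sigma}^{g_s}}(0)$ — equivalently ${\rm B}_{f_t, X_{\sigma}^{g_s}}(0)$ via \eqref{EqBrasEul} — is constant by the discussion immediately preceding the statement (the paragraph after Corollary \ref{ObstrucaoEulerIC}): there it is shown that $\Gamma_+((f_t)^s_\Delta)=\Gamma_+(f_\Delta)$ for all faces $\Delta$ and all $s,t$, so by Theorem \ref{BrasseletIC} (via Equations \eqref{nearbycicle1}, \eqref{nearbycicle2}) the Euler characteristics of the relevant Milnor fibers, and hence ${\rm B}_{f_t, X_{\sigma}^{g_s}}(0)$, are constant. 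Finally $\mu(f_t|_{X_{\sigma}^{g_s}})$ is handled by the Lê--Greuel formula together with the already-established constancy of the Brasselet numbers: since $f_t$ has a stratified isolated singularity on $X_{\sigma}^{g_s}$, Theorem \ref{Le Greuel} (or a direct Newton-polygon computation as in \cite{MT1}) expresses this Milnor number via the same volume data, which are invariant.

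Assembling the pieces: $\mu_{BR}(f_t, X_{\sigma}^{g_s})$ is a fixed $\Z$-linear combination of quantities each shown to be constant over $(s,t)\in\C^2$, so it is itself constant.

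The main obstacle I anticipate is pinning down the correct formula expressing $\mu_{BR}(X,f)$ in terms of the Milnor-theoretic and Euler-obstruction data in the weighted homogeneous case, and verifying its hypotheses apply verbatim here — in particular that the stratified-isolated-singularity assumption on $f_t|_{X_{\sigma}^{g_s}}$ plus the weighted-homogeneity of $X_{\sigma}^{g_s}$ are exactly what is needed to invoke it, and that the formula is genuinely $(s,t)$-independent in its shape (only the listed invariants vary, nothing else). A secondary technical point is checking that a generic linear form $L$ can be chosen so that $(g_s,L)$ is a non-degenerate complete intersection simultaneously usable for all $s$; this follows since genericity of $L$ is an open dense condition and the family is algebraic, but it should be stated carefully. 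Everything else is an assembly of results already available in the excerpt.
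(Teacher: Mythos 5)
Your proposal follows essentially the same route as the paper's proof: the paper invokes \cite[Corollary 2.38]{BrunaTese} for the exact identity $\mu_{BR}(f_t,X_{\sigma}^{g_s}) = \mu(f_t) + {\rm Eu}_{X_{\sigma}^{g_s}}(0) + (-1)^{n-1}\bigl({\rm Eu}_{f_t,X_{\sigma}^{g_s}}(0) + 1\bigr)$ (so no $\mu(f_t|_{X_{\sigma}^{g_s}})$ term actually appears, and your L\^e--Greuel step is superfluous), and then establishes constancy of each summand exactly as you do --- $\mu(f_t)$ via $\Gamma_{+}(f_t)=\Gamma_{+}(f)$ and \cite[Corollary 3.5]{MT1}, ${\rm Eu}_{X_{\sigma}^{g_s}}(0)$ via Corollary \ref{ObstrucaoEulerIC}, and ${\rm Eu}_{f_t,X_{\sigma}^{g_s}}(0)$ via the remark following that corollary. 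Your only unresolved point, the precise shape of the $\mu_{BR}$ formula, is settled by that citation and does not change the structure of the argument.
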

\begin{proof}
	From \cite[Corollary 2.38]{BrunaTese} we have 
				$$
					\mu_{BR}(f_t,X_{\sigma}^{g_s}) = \mu(f_t) + {\rm Eu}_{X_{\sigma}^{g_s}}(0) + (-1)^{n-1}({\rm Eu}_{f_t,{X_{\sigma}}^{g_s}}(0) + 1).
				$$
By the hypothesis $X_\sigma = \C^n$, then $L(g^{\mu})(x) = g_\mu(x)$, for all 
$\mu \in {\rm Int}(\check{\Delta}) \cap M(S_{\sigma} \cap \Delta)^{*}$.
From definitions \ref{degenerate} and \ref{ICdegenerate}, we can conclude that	
$f_t:\mathbb{C}^n \to \mathbb{C}$ is a family of non-degenerate polynomial 
functions. Furthermore,
	$$
	\Gamma_{+}(f) = \Gamma_{+}(f_t), \ \ \text{for all} \ \ t \in \mathbb{C},
	$$ 
	since $h_j$ satisfies the	condition \eqref{condition}. Then,	by 
	\cite[Corollary 3.5]{MT1} 
	$$
	\chi(f^{-1}(\delta) \cap B_{\varepsilon}) = \chi(f_{t}^{-1}(\delta) \cap 
	B_{\varepsilon}), \ \ \text{for all} \ \ t \in \mathbb{C},
	$$
	where $0 < \left| \delta 
	\right| \ll \varepsilon \ll 1 $. Consequently, $\mu(f_t)$ is constant, once 
	$\chi(f^{-1}(\delta) \cap 
	B_{\varepsilon})= 1 + (-1)^{n-1} \mu(f)$.  	Therefore the result follows 
	from 
	Corollary \ref{ObstrucaoEulerIC} and the remark that follows Corollary 
	\ref{ObstrucaoEulerIC}. 
\end{proof}

We observe that this result is a kind of generalization of\cite[Theorem 
$3.6$]{BBT3}.


\section{Toric surfaces}\label{Section-ToricSurface}

Let $f$ be a polynomial function defined on a $2$-dimensional toric variety 
$X_{\sigma} \subset \mathbb{C}^{n}$. In this section, we present a characterization 
of the polynomial functions $g: X_{\sigma} \to \mathbb{C}$ which are 
prepolar with respect to $\mathcal{T}^f$ at the origin. Using this characterization and the results of the last sections we 
present some examples of computation of the Brasselet number ${\rm 
B}_{f,X_{\sigma}}$, for  
a class of toric surfaces $X_{\sigma}$ that are also determinantal.

Let us remember that a strongly convex cone in $\mathbb{R}^2$ has the 
following normal form.

\begin{proposition}[\cite{F}]\label{proposition1}
	Let $\sigma \subset \mathbb{R}^2$ be a strongly convex cone, then 
	$\sigma$ is isomorphic to the cone generated by the vectors $v_1 = pe_1 - 
	qe_2$ and $v_2 = e_2$, for some integers $p,q \in \mathbb{Z}_{>0}$ such 
	that $0<q<p$ and $p,q$ are coprime.
\end{proposition}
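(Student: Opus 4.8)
The plan is to exploit the fact that \emph{isomorphic} here means up to the action of the lattice automorphism group $\mathrm{GL}_2(\mathbb{Z})=\mathrm{Aut}_{\mathbb{Z}}(\mathbb{Z}^2)$ on cones in $\mathbb{R}^2$, so that the statement is a normal-form description of the orbits of the $2$-dimensional strongly convex rational polyhedral cones under this action. I would therefore take such a cone $\sigma$ and reduce it to the claimed shape by a short sequence of elements of $\mathrm{GL}_2(\mathbb{Z})$, which amounts to running a Euclidean-algorithm-type reduction on the primitive generators of the two edges of $\sigma$.

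First I would record the starting data. A $2$-dimensional strongly convex rational polyhedral cone $\sigma\subset\mathbb{R}^2$ has exactly two edges, each generated by a unique primitive vector $u_1,u_2\in\mathbb{Z}^2$, and $\sigma=\mathbb{R}_{\ge 0}u_1+\mathbb{R}_{\ge 0}u_2$ with $u_1,u_2$ linearly independent (linear independence is forced by strong convexity). If $\dim\sigma\le 1$ then $\sigma$ is $\{0\}$ or a ray and there is nothing to prove, so I read the statement for full-dimensional $\sigma$.

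Next comes the reduction. Since $u_2$ is primitive, B\'ezout's identity lets me complete it to a $\mathbb{Z}$-basis of $\mathbb{Z}^2$, so there is $A\in\mathrm{GL}_2(\mathbb{Z})$ with $Au_2=e_2$; after replacing $\sigma$ by $A\sigma$ I may assume $u_2=e_2$. Write $u_1=(p,-q)$ with $p,q\in\mathbb{Z}$; linear independence of $u_1,u_2$ forces $p\ne 0$, and composing with $\mathrm{diag}(-1,1)$ (which fixes $e_2$) if necessary I may assume $p>0$. The unipotent matrices $\begin{pmatrix}1&0\\ c&1\end{pmatrix}$, $c\in\mathbb{Z}$, fix $e_2$ and send $(p,-q)\mapsto(p,cp-q)$, so by choosing $c$ suitably I move the second coordinate into the interval $(-p,0]$, i.e.\ I arrange $0\le q<p$. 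Finally, primitivity of $u_1=pe_1-qe_2$ is exactly the condition $\gcd(p,q)=1$: if $q=0$ this forces $p=1$ and $\sigma$ is the first-quadrant cone $\langle e_1,e_2\rangle$ (the smooth case), and otherwise $0<q<p$ with $\gcd(p,q)=1$, which is the asserted normal form.

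To close, I would check consistency in the other direction: for $p>0$, $0<q<p$, $\gcd(p,q)=1$, the cone $\langle pe_1-qe_2,\,e_2\rangle$ is indeed $2$-dimensional and strongly convex, since its two generators are linearly independent and are not antipodal rays (that would require $pe_1-qe_2=-e_2$, hence $p=0$). The only point demanding care is the bookkeeping — identifying the subgroup of $\mathrm{GL}_2(\mathbb{Z})$ stabilizing $e_2$, reducing $q$ modulo $p$ into the prescribed range, and separating off the degenerate smooth case $q=0$ — but this is elementary and no genuine obstacle arises; the result is the classical classification of two-dimensional cones recorded in \cite{F}.
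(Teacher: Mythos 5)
Your argument is correct and is exactly the standard normal-form reduction from Fulton's book; the paper itself offers no proof, simply citing \cite{F}, so there is nothing to diverge from. Two small remarks on bookkeeping: your observation that the case $q=0$, $p=1$ (the smooth cone, hence $\mathbb{C}^2$) falls outside the literal statement $0<q<p$ is a genuine, if minor, imprecision in the proposition as quoted; and your final "not antipodal" check is redundant, since a cone spanned by two linearly independent vectors can never contain a line (any relation $a u_1 + b u_2 = -(a' u_1 + b' u_2)$ with nonnegative coefficients forces all of them to vanish by linear independence).
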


Given a cone $\sigma \subset \mathbb{R}^2$, Riemenschneider \cite{Rie1,Rie2} 
proved
 that the binomials which generate the ideal $I_{\sigma}$ are 
given by the {\it{quasiminors}} of a {\it{quasimatrix}}, where $X_{\sigma} = 
V(I_{\sigma})$. In the following we recall the definition of 
{\it{quasimatrix}}.

\begin{definition} 
	Given $A_{i}, B_{i}, C_{l,l+1} \in \mathbb{C}$ with $i=1,\dots,n$ and 
	$l=1,\dots,n-1$, a quasimatrix with entries $A_{i}, B_{i}, C_{l,l+1}$ is 
	written as
	\begin{equation*}
	\begin{matrix} A=
	\begin{pmatrix}
	A_1   & \ \ \ \  & A_2  & \cdots &    A_{n-1} & \ \ \ \ &  A_n \\
	
	B_1  & \ \ \ \ & B_2 &  \cdots   &   B_{n-1}  & \ \ \ \ & B_n \\
	
	& C_{1,2} & & \cdots   & &  C_{n-1,n} &
	\end{pmatrix}.
	\end{matrix}
	\end{equation*}
	
	\noindent The quasiminors of the quasimatrix $A$ are defined by 
	$$
	A_i \cdot B_j - B_i\cdot (C_{i,i+1}\cdot C_{i+1,i_2} \cdots  
	C_{j-1,j})\cdot A_j
	$$ \LHchange{Adicionei pontos de multiplicacao na equacao, estao certos?}\TDchange{ok}
	for $1 \leq i < j \leq n$.
\end{definition}

Given $\sigma \subset \mathbb{R}^2$ generated by $v_1 = p e_{1} - q e_{2}$ 
and $v_{2} = e_{2}$, with $p$ and $q$ as above, let us consider the 
Hirzebruch-Jung continued fraction $$\frac{p}{p-q} = a_2 - \frac{1}{a_3 - 
\frac{1}{\dots - \frac{1}{a_{n-1}}}} = [[a_2, a_3, \dots,a_{n-1}]] $$ where 
the integers $a_2, \dots, a_{n-1}$ satisfies $a_i \geq 2$, for 
$i=2,\dots,n-1$. By \cite{Rie2} we have: 
\LHchange{completar...}\TDchange{Esse acho que n\~ao da pra colocar pq eu encontrei essa configura\c c\~ao em uma traducao na verdade}

\begin{proposition}\label{proposition2}
	The ideal $I_{\sigma}$ is generated by the quasiminors of the quasimatrix
	\begin{equation*}
	\begin{matrix}
	\begin{pmatrix}
	z_1  & \ \ \ \ \ \ & z_2 & \ \ \ \ \ \ & z_3 & \cdots &    z_{n-2} & \ \ 
	\ \ \ \ & z_{n-1} \\
	
	z_2 & \ \ \ \ \ \ & z_3 & \ \ \ \ \ \ & z_4 &  \cdots   &   z_{n-1} & \ \ 
	\ \ \ \ &  z_{n} \\
	
	&	z_2^{a_2 - 2} & & z_3^{a_3 - 2} & &  \cdots   & & z_{n-1}^{a_{n-1}-2} 
	\\
	\end{pmatrix},
	\end{matrix}
	\end{equation*}
	where the $a_i$ are given by the Hirzebruch-Jung continued fraction of 
	$\frac{p}{p-q}$. Moreover, this set of generators is minimal.
\end{proposition}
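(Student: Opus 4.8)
The statement is classical, due to Riemenschneider \cite{Rie1,Rie2}; the plan is to run the proof through the dual semigroup $S_\sigma$ and its Hilbert basis. With $\sigma$ generated by $v_1 = pe_1 - qe_2$ and $v_2 = e_2$, a direct computation identifies the polar cone $\check\sigma \subset M_{\mathbb{R}} \cong \mathbb{R}^2$ as the cone on $(1,0)$ and $(q,p)$. The first step is to exhibit the Hilbert basis of $S_\sigma = \check\sigma \cap \mathbb{Z}^2$: it is a chain $w_1 = (1,0), w_2, \ldots, w_n = (q,p)$ of lattice points along the compact part of the boundary of $\operatorname{conv}(S_\sigma \setminus \{0\})$, governed by the three-term relations $w_{i-1} + w_{i+1} = a_i\, w_i$ for $2 \le i \le n-1$, where the integers $a_i \ge 2$ are exactly the partial quotients of the Hirzebruch--Jung continued fraction $\frac{p}{p-q} = [[a_2, \ldots, a_{n-1}]]$. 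This is the standard dictionary between two-dimensional cyclic quotient singularities and continued fractions; I would either quote it from \cite{F} or reprove it by the usual induction in which one ``pulls in'' the boundary of the convex hull one lattice point at a time. In particular the Hilbert basis has exactly $n$ elements, matching the $n$ variables $z_1, \ldots, z_n$; identifying $z_i$ with $w_i$ realizes $X_\sigma$ in $\mathbb{C}^n$ so that $I_\sigma = \ker\bigl(\mathbb{C}[z_1, \ldots, z_n] \to \mathbb{C}[S_\sigma],\ z_i \mapsto w_i\bigr)$ is the prime toric ideal, generated by the binomials $z^\alpha - z^\beta$ with $\sum_i \alpha_i w_i = \sum_i \beta_i w_i$.

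Next, one checks that the quasiminors belong to $I_\sigma$. The quasiminor attached to $1 \le i < j \le n-1$ is $q_{ij} = z_i z_{j+1} - z_{i+1} z_j \prod_{l=i+1}^{j} z_l^{\,a_l - 2}$, and its membership in $I_\sigma$ is equivalent to the exponent identity $w_i + w_{j+1} = w_{i+1} + w_j + \sum_{l=i+1}^{j}(a_l - 2)\, w_l$ in $S_\sigma$, which one obtains by telescoping the three-term relations above; for $j = i+1$ this is the familiar $z_i z_{i+2} = z_{i+1}^{a_{i+1}}$.

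The substantive point is that the $\binom{n-1}{2}$ quasiminors \emph{generate} $I_\sigma$. Here I would use that every element of $S_\sigma$ has a unique ``standard'' expression $a\, w_k + b\, w_{k+1}$ with consecutive indices $k, k+1$, coming from the subdivision of $\check\sigma$ into the basic (unimodular) subcones $\mathbb{R}_{\ge 0} w_k + \mathbb{R}_{\ge 0} w_{k+1}$; equivalently, the monomials $z_k^{\,a} z_{k+1}^{\,b}$ form a $\mathbb{C}$-basis of $\mathbb{C}[S_\sigma]$. Reading each $q_{ij}$ as a rewriting rule $z_i z_{j+1} \rightsquigarrow z_{i+1} z_j \prod_{l=i+1}^{j} z_l^{\,a_l - 2}$ — equivalently a rule for every product $z_i z_k$ with $|i-k| \ge 2$ — one shows that iterating these rules reduces an arbitrary monomial in $z_1, \ldots, z_n$ to a standard monomial, and that the reduction is confluent. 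By the diamond lemma, $\mathbb{C}[z_1, \ldots, z_n]/(q_{ij} : i < j)$ is then spanned over $\mathbb{C}$ by the standard monomials, and since the natural surjection onto $\mathbb{C}[S_\sigma]$ maps them bijectively onto a basis, it is an isomorphism; hence $I_\sigma = (q_{ij} : 1 \le i < j \le n-1)$. Verifying that the rewriting terminates and that the overlap ambiguities resolve is the step I expect to be the main obstacle: it is purely combinatorial, but one has to keep careful track of the exponents $a_l - 2$.

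Finally, minimality. Since $(X_\sigma, 0)$ is a germ with maximal ideal $\mathfrak{m} = (z_1, \ldots, z_n)$, it suffices to show the images of the quasiminors in $I_\sigma / \mathfrak{m} I_\sigma$ are linearly independent. I would use the $S_\sigma$-grading with $\deg z_i = w_i$, under which $q_{ij}$ is homogeneous of degree $w_i + w_{j+1}$; using the convexity of the Hilbert basis one verifies that, over the admissible range $1 \le i < j \le n-1$, these multidegrees are pairwise distinct, that $I_\sigma$ has a one-dimensional graded piece in each of them, and that $\mathfrak{m} I_\sigma$ contributes nothing there (equivalently, $z_i z_{j+1}$ and $z_{i+1} z_j \prod_l z_l^{\,a_l - 2}$ are the only monomials of degree $w_i + w_{j+1}$, and each $w_i + w_{j+1} - w_l$ supports a single monomial). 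Hence no quasiminor lies in the span of the others modulo $\mathfrak{m} I_\sigma$, so the quasiminors form a minimal generating set of $I_\sigma$, as claimed.
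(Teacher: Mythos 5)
The paper offers no proof of this proposition: it is quoted directly from Riemenschneider \cite{Rie2} (the sentence before the statement reads ``By \cite{Rie2} we have''), so any complete argument you give is doing more than the text. Your overall architecture is the standard one and most of it is sound: the identification of $\check\sigma$ and of the Hilbert basis $w_1,\dots,w_n$ with the three-term relations $w_{i-1}+w_{i+1}=a_iw_i$, the telescoping identity showing each quasiminor lies in $I_\sigma$, and the reduction to standard monomials $z_k^az_{k+1}^b$ for generation are all correct. (One simplification for the step you flag as the main obstacle: you do not need confluence. Termination alone --- e.g.\ the rule $z_iz_{j+1}\rightsquigarrow z_{i+1}z_j\prod_l z_l^{a_l-2}$ strictly decreases the exponent vector in lexicographic order within the finite set of monomials of a fixed $S_\sigma$-degree --- shows the standard monomials span the quotient, and since they map to a basis of $\mathbb{C}[S_\sigma]$ the surjection is an isomorphism.)

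The genuine gap is in your minimality argument. The multidegrees $w_i+w_{j+1}$ are \emph{not} pairwise distinct, and the graded pieces $(I_\sigma)_{w_i+w_{j+1}}$ are \emph{not} one-dimensional, whenever the Hilbert basis contains four or more consecutive collinear points, i.e.\ whenever $a_l=a_{l+1}=2$ for some $l$. This is exactly the determinantal situation the paper emphasizes: for the cone over the rational normal curve ($\frac{p}{p-q}=[[2,\dots,2]]$, $w_i=(1,i-1)$) one has $w_1+w_5=w_2+w_4=(2,4)$, the degree-$(2,4)$ piece of $I_\sigma$ is two-dimensional (spanned by $z_1z_5-z_2z_4$ and $z_2z_4-z_3^2$; note also that $z_1z_5$, $z_2z_4$, $z_3^2$ are three monomials of that degree, contradicting your parenthetical claim), and both quasiminors $q_{14}$, $q_{23}$ are needed. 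So the step ``these multidegrees are pairwise distinct, $I_\sigma$ has a one-dimensional graded piece in each of them'' would fail. The graded-Nakayama strategy itself survives, but the correct statement to prove is that for each relevant degree $m=w_i+w_{j+1}$ one has $(\mathfrak{m}I_\sigma)_m=0$ --- because $(I_\sigma)_{m-w_k}=0$ for every $k$, each such degree supporting a unique monomial --- and that the quasiminors sharing a common multidegree remain linearly independent in $(I_\sigma)_m$; both points need their own (short) verification that your write-up does not supply.
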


Then, if $a_i = 2$ for $i=3,\dots, n-2$, we have that $X_{\sigma}$ is a 
determinantal surface \cite{GGR, BBT, MC}, in particular if the minimal dimension of 
embedding of $X_{\sigma}$ is $4$,\ie if $$\frac{p}{p-q} = a_2 - 
\frac{1}{a_3} $$ then $X_{\sigma}$ is always determinantal and the ideal 
$I_{\sigma}$ is generated by the $2 \times 2$ minors of the matrix

\begin{equation*}
\begin{matrix}
\begin{pmatrix}
z_1  & \ \ & z_2 & \ \  & z_3^{a_3 -1}  \\

z_2^{a_2 -1} & \ \  & z_3 &  \ \ & z_4  \\
\end{pmatrix}.
\end{matrix}
\end{equation*}

We will consider $\sigma$ as in Proposition \ref{proposition1}. Take 
$a_2,\dots, a_{n-1}$ the integers coming from the Hirzebruch-Jung 
continued fraction of $\frac{p}{p-q}$, \LHchange{onde entram os a's?} \TDchange{ok} we will 
denote by  $$\mu_1 = 
(\mu_1^{1},\mu_1^{2})=(1,0), \ \ \mu_2 = (\mu_2^{1},\mu_2^{2})=(1,1), \ \ 
\mu_{i+1}^{j}=a_i \cdot \mu_{i}^{j} - \mu_{i-1}^{j},$$ the minimal set of 
generators of $S_{\sigma}$, with $i=2,\dots,n-1$; $j=1,2$. We note that it is 
possible to show that $\mu_n =(\mu_n^{1},\mu_n^{2})=(q,p)$ (see \cite{Rie1, 
Rie2}). Thus $\varphi: (\mathbb{C}^*)^2 \times X_{\sigma} \to X_{\sigma}$ 
given by 
$$\varphi((t_1,t_2),(z_1,\dots,z_n)) = (t_1 \cdot z_1,t_1 \cdot t_2 \cdot 
z_2,t_1^{\mu_3^{1}} \cdot t_2^{\mu_3^{2}} \cdot z_3,\dots, t_1^{q} \cdot 
t_2^{p} \cdot z_n)$$
is an action of $(\mathbb{C}^{*})^2$ in $X_{\sigma}$. Each orbit of $\varphi$ 
is an embedding of a $d$-dimensional torus, $0\leq d 
\leq 2$, in $X_{\sigma}$. The action $\varphi$ has $4$ orbits, that are
\begin{equation*}
\begin{array}{lrl}
T_{\Delta_0} & = &\{(0,\dots,0) \} \\

T_{\Delta_1} & = & \bigsetdef{(t_1,0,\dots,0)}{t_1 \in \mathbb{C}^* } \cong 
\mathbb{C}^*
\\

T_{\Delta_2} & = &\bigsetdef{ (0,\dots,0,t_1^{q} \cdot t_2^{p})}{t_1,t_2 \in 
\mathbb{C}^*} \cong \mathbb{C}^* \\

T_{\Delta_3} & = & \bigsetdef{
(t_1,t_1 \cdot t_2,t_1^{\mu_3^{1}} \cdot t_2^{\mu_3^{2}},\dots,t_1^{q} \cdot 
t_2^{p})}{t_1,t_2 \in \mathbb{C}^*} \cong (\mathbb{C}^*)^2
\end{array}.
\end{equation*}

\noindent Moreover, as in Section $3$, $$X_{\sigma} = \bigsqcup_{\Delta_i 
\prec \check{\sigma}} T_{\Delta_i},$$ with $i=0,1,2,3$, is a decomposition of 
$X_{\sigma}$ in strata satisfying the Whitney conditions. 

The toric surfaces obtained in Proposition \ref{proposition2} are normal 
toric surfaces, then they are smooth or they have isolated singularity at the 
origin. Therefore, if $f =\sum_{v\in S_{\sigma}} a_v \cdot v \in 
\mathbb{C}[S_{\sigma}]$ is a 
	non-degenerate polynomial function on $X_{\sigma}$, then 
	$$
	\mathcal{T}_f=\bigsetdef{T_{\Delta_i} \cap X_{\sigma}^{f}}{i=0,1,2,3},
	$$
	is a Whitney stratification of $X_{\sigma}^{f}$, since $T_{\Delta_{1}}$ and $T_{\Delta_{2}}$ are smooth subvarieties of $\overline{T}_{\Delta_{3}} = 		X_{\sigma}$ which satisfy $\overline{T}_{\Delta_{1}} \cap \overline{T}_{\Delta_{2}} = \{ (0,\dots,0)\}$. As a consequence,
		$$
			\mathcal{T}^{f}=\bigsetdef{T_{\Delta_i} \setminus X_{\sigma}^{f}, 
			\ \ 
			T_{\Delta_i} \cap X_{\sigma}^{f}, \ \ \left\{0 \right\}}{ 
			i=0,1,2,3},
		$$
is a good stratification of $X_{\sigma}$ relative to $f$. 
	
\medskip

Next, we characterize the polynomial functions 
which have a stratified isolated singularity at the origin.

\begin{lemma}\label{lemma2}
	Let $\sigma \subset \mathbb{R}^2$ be a strongly convex cone and 
	$\mathcal{T}$ the Whitney stratification of $X_{\sigma} \subset 
	\mathbb{C}^n$ whose the strata are $T_{\Delta_0}$, $T_{\Delta_1}$, 
	$T_{\Delta_2}$ and $T_{\Delta_3}$. Then, a non-degenerate polynomial 
	function $g: (X_{\sigma},0) \to (\mathbb{C},0)$ has an isolated singularity 
	at the origin (in the stratified sense) if, 
	and only if, $$g(z_1,\dots,z_n) = c_1z_1^{p_1} + h(z_1,\dots,z_n) + 
	c_nz_n^{p_n},$$ where $h$ is a polynomial function on $X_{\sigma}$, 
	$c_1,c_n \in \mathbb{C}^{*}$ and $p_1,p_n \in \Z^*_+$.
\end{lemma}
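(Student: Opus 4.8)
The plan is to analyze when the stratified critical locus $\Sigma_{\mathcal{T}}g$ can be concentrated at the origin, working stratum by stratum through the four $T$-orbits $T_{\Delta_0},T_{\Delta_1},T_{\Delta_2},T_{\Delta_3}$. Since $g$ is non-degenerate, the restriction $g|_{T_{\Delta_3}}$, which is (up to the torus embedding) a Laurent polynomial on $(\mathbb{C}^*)^2$ whose compact faces define smooth reduced hypersurfaces, automatically has no critical points on $T_{\Delta_3}$ itself (this is essentially the content of \cite[Lemma 4.1]{MT1} applied to the single function $g$); so the $2$-dimensional open stratum never contributes. Hence the condition ``$g$ has a stratified isolated singularity at $0$'' reduces to: $g$ restricted to the one-dimensional strata $T_{\Delta_1}\cong\mathbb{C}^*$ and $T_{\Delta_2}\cong\mathbb{C}^*$ has no critical points (the strata being one-dimensional and the origin excluded), i.e. $g|_{T_{\Delta_1}}$ and $g|_{T_{\Delta_2}}$ are submersions onto their images wherever defined, which for a polynomial in one variable $t_1$ (resp. $t_1^q t_2^p$) amounts to each of these restrictions being a nonconstant monomial plus higher-order terms that do not kill the derivative — precisely the statement that a pure power of the coordinate $z_1$ (resp. $z_n$) appears with nonzero coefficient.

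Concretely, I would first compute the pull-back of $g$ under the parametrization $\varphi$ restricted to each orbit. On $T_{\Delta_1}=\{(t_1,0,\dots,0)\}$ every monomial $z^v$ with $v\in S_\sigma$ vanishes unless it involves only $z_1$; among the minimal generators $\mu_1=(1,0),\mu_2=(1,1),\dots,\mu_n=(q,p)$ only $\mu_1$ has second coordinate zero, so $g|_{T_{\Delta_1}}$ is a polynomial in $z_1$ alone, say $\sum_j c_1^{(j)} z_1^{j}$; this has no critical point on $\mathbb{C}^*$ exactly when it is a single monomial $c_1 z_1^{p_1}$ with $c_1\neq0$, $p_1\geq 1$ — any second nonzero term would, generically, be irrelevant, but the non-degeneracy hypothesis on $g$ is what forces the vanishing-order behaviour so that in fact only one power of $z_1$ can survive on this stratum. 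Symmetrically, on $T_{\Delta_2}=\{(0,\dots,0,t_1^q t_2^p)\}$ only the generator $\mu_n=(q,p)$ survives, giving $g|_{T_{\Delta_2}}=\sum_j c_n^{(j)} z_n^{j}$, and the same reasoning yields that no critical point occurs iff this is $c_n z_n^{p_n}$ with $c_n\in\mathbb{C}^*$, $p_n\in\mathbb{Z}^*_+$. The remaining monomials of $g$, which vanish on both $T_{\Delta_1}$ and $T_{\Delta_2}$, are collected into the term $h$, and one checks (using that $T_{\Delta_1},T_{\Delta_2}$ are the only positive-dimensional small strata and that non-degeneracy handles $T_{\Delta_3}$) that adding such an $h$ never creates a stratified critical point away from $0$; this gives the converse direction.

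I expect the main obstacle to be the careful bookkeeping in the ``only if'' direction: one must argue that non-degeneracy of $g$ together with the requirement that $g|_{T_{\Delta_1}}$ (resp. $g|_{T_{\Delta_2}}$) have no critical point on $\mathbb{C}^*$ forces \emph{exactly one} pure power of $z_1$ (resp. $z_n$) to appear, and with nonzero coefficient, rather than merely ``at least one''. This requires translating the non-degeneracy condition of Definition \ref{degenerate} — that the compact faces of $\Gamma_+(g)$ cut out smooth reduced hypersurfaces in the torus — into a statement about the Newton polygon meeting the rays corresponding to $\Delta_1$ and $\Delta_2$, and then invoking it to control the edge polynomial $g_\gamma$ along the relevant compact faces. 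I would set this up by identifying, for the ray $\Delta_1\prec\check\sigma$ spanned by a primitive vector dual to $v_1$, the supporting face of $\Gamma_+(g)$ and noting that $g_{\Delta_1}$ is exactly the $z_1$-part of $g$; non-degeneracy of the corresponding face then pins down that this part is a nonzero monomial. Once this local analysis at $\Delta_1$ and $\Delta_2$ is done, assembling the global statement and the converse is routine: collect everything else into $h$, and conclude.
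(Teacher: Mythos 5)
Your stratum-by-stratum skeleton is the same as the paper's, but the key step for the one-dimensional strata is carried out incorrectly, and the error is exactly at the place you yourself flag as ``the main obstacle.'' You assert that $g|_{T_{\Delta_1}}$ has no critical point if and only if it is a \emph{single} monomial $c_1z_1^{p_1}$, and that non-degeneracy forces ``only one power of $z_1$'' to survive on this stratum. Both claims are false, and neither is what the lemma asserts: since $h$ is an arbitrary polynomial function on $X_\sigma$, the decomposition $g=c_1z_1^{p_1}+h+c_nz_n^{p_n}$ says only that $\supp g$ meets the ray $\Delta_1$ and the ray $\Delta_2$, i.e.\ that \emph{at least one} pure power of $z_1$ and of $z_n$ occurs (the paper makes this explicit right after the lemma: the condition is $\Gamma_+(g)\cap\Delta_1\neq\emptyset$ and $\Gamma_+(g)\cap\Delta_2\neq\emptyset$). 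For a counterexample to your stronger claim, already in the classic case $X_\sigma=\mathbb{C}^2$ the function $g=z_1+z_1^2+z_2$ is non-degenerate and nonsingular, yet two pure powers of $z_1$ survive on the $z_1$-axis. The source of the confusion is that you are testing for critical points of $g|_{T_{\Delta_1}}$ on all of $\mathbb{C}^*$, whereas a stratified \emph{isolated} singularity at the origin is a local condition: critical points of $z_1+z_1^2$ at $z_1=-1/2$ are irrelevant.

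The correct (and much simpler) dichotomy on $T_{\Delta_1}$ is: if no pure power of $z_1$ occurs, every monomial of $g$ vanishes identically on $T_{\Delta_1}$, so $g|_{T_{\Delta_1}}\equiv 0$ and the whole stratum lies in $\Sigma_{\mathcal T}g$; if at least one occurs, $g|_{T_{\Delta_1}}$ is a nonzero one-variable polynomial, and one then invokes non-degeneracy (via the proof of \cite[Lemma 4.1]{MT1} and \cite[Lemma 78]{Oka}, as the paper does) to conclude that $g|_{T_{\Delta_1}}$, $g|_{T_{\Delta_2}}$ and $g|_{T_{\Delta_3}}$ have no critical points in a small punctured ball around $0$. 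Your treatment of $T_{\Delta_3}$ has the same global/local slippage (``no critical points on $T_{\Delta_3}$ itself'' is too strong; only the statement inside $B_\varepsilon$ is available or needed), but that part is repairable. As written, however, the program you outline for the ``only if'' direction — using non-degeneracy of the edge polynomials to pin $g|_{\Delta_1}$ down to a single monomial — cannot be completed, so the proposal has a genuine gap.
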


\begin{proof} 
	Let us write $g$ as follows $$g(z_1,\dots,z_{n}) = 
	\displaystyle{\sum_{l=1}^{m} c_lz_1^{p_1^l} z_2^{p_2^l} \dots 
	z_{n}^{p_{n}^l}},$$ where $l=1,\dots,m$, $p_i^{l} \in \Z^*_+$ 
	and $c_l \in \mathbb{C}$.
	
	Suppose that $g$ has a stratified isolated singularity at the origin $0 
	\in \mathbb{C}^n$, with respect to the stratification $\mathcal{T}$, then there must be $l_1,l_n \in \left\{1,\dots,m 
	\right\}$ such that
	
	\begin{equation*}
	\begin{array}{lrl}
	c_{l_1} \in \mathbb{C}^*, \ \ &  p_1^{l_1} \neq 0 \ \ \text{and}  \ \ & 
	p_{i}^{l_1} = 0, \ \ \text{for} \ \ i \in \left\{2,\dots,n\right\} \\
	
	c_{l_n} \in \mathbb{C}^*, \ \ &  p_n^{l_n} \neq 0 \ \ \text{and}  \ \ & 
	p_{i}^{l_n} = 0, \ \ \text{for} \ \ i \in \left\{1,\dots,n-1\right\}
	\end{array},
	\end{equation*}
	
	\noindent otherwise $T_{\Delta_1}, T_{\Delta_2} \subset 
	\Sigma_{\mathcal{T}} g$, since
	\begin{equation*}
	T_{\Delta_1}  =  \bigsetdef{(t_1,0,\dots,0)}{t_1 \in \mathbb{C}^*},\ 
	\textup{and}\ 
	T_{\Delta_2}  =  \bigsetdef{ (0,\dots,0,t_1^{q} \cdot t_2^{p})}{t_1,t_2 
	\in \mathbb{C}^*}. 
	\end{equation*}
	In other words, $g$ must contain monomials of the 
	form $c_1z_1^{p_1}$ and $c_nz_n^{p_n}$. 
	
	Now, suppose that $g$ has the form mentioned above, then  $\Gamma_{+}(g) 
	\cap \Delta_1 \neq \emptyset$, $\Gamma_{+}(g) \cap \Delta_2\neq 		
	\emptyset$ and  
	$\Gamma_{+}(g) \cap \Delta_3 \neq \emptyset$. By the proof of 
	\cite[Lemma 4.1]{MT1} we can conclude 
	that 									$g|_{T_{\Delta_{1}}}: 
	T_{\Delta_{1}} \to \mathbb{C}$ and $g|_{T_{\Delta_{2}}}: T_{\Delta_{2}} 
	\to \mathbb{C}$ are non-degenerate polynomial 					
	functions. Moreover,
	\begin{equation*}
		\begin{array}{lrl}

		T_{\Delta_1} & = & \bigsetdef{(t_1,0,\dots,0)}{t_1 \in \mathbb{C}^* } \cong 
		\mathbb{C}^*,
		\\

		T_{\Delta_2} & = &\bigsetdef{ (0,\dots,0,t_1^{q} \cdot t_2^{p})}{t_1,t_2 \in 
		\mathbb{C}^*} \cong \mathbb{C}^*, \\

		T_{\Delta_3} & = & \bigsetdef{
		(t_1,t_1 \cdot t_2,t_1^{\mu_3^{1}} \cdot t_2^{\mu_3^{2}},\dots,t_1^{q} \cdot 
		t_2^{p})}{t_1,t_2 \in \mathbb{C}^*} \cong (\mathbb{C}^*)^2.
		\end{array}
	\end{equation*}
	 Then by \cite[Lemma 78]{Oka} there exists an $\varepsilon>0$ such 
	 that $g|_{T_{\Delta_{1}}}$, $g|_{T_{\Delta_{2}}}$ and 
	 $g|_{T_{\Delta_{3}}}$	
	have no singularities in ${T_{\Delta_{1}}} \cap B_{\varepsilon}$, 
	${T_{\Delta_{2}}} \cap B_{\varepsilon}$ and ${T_{\Delta_{3}}} 
	\cap 									B_{\varepsilon}$, respectively. 
\end{proof}

As a consequence of Lemma \ref{lemma2} we obtain information about the 
singular set of $g$ by
just looking at its Newton polygon $\Gamma_{+}(g)$. More precisely, a 
non-degenerate polynomial function $g: (X_{\sigma},0) \to (\mathbb{C},0)$ has 
an isolated 
singularity at the origin (in the stratified sense) if, and only if, $\Gamma_{+}(g)$ intersects 
$\Delta_1$ and $\Delta_2$, exactly in 
the same way as the classic case,\ie in the case where $X_{\sigma} = 
\mathbb{C}^2$.

\begin{proposition}
Let $(g,f): (X_{\sigma},0) \to (\mathbb{C}^2,0)$ be a non-degenerate complete intersection, such that $f$ and $g$ have no irreducible components in common. The polynomial function $g$ is prepolar with respect to $\mathcal{T}^f$ if, and only if, 
$$g(z_1,\dots,z_n) = c_1z_1^{p_1} + h(z_1,\dots,z_n) + c_nz_n^{p_n},$$ where 
$h$ is a polynomial function on $X_{\sigma}$, $c_1,c_n \in \mathbb{C}^{*}$ 
and $p_1,p_n \in \Z_{+}^*$.
\end{proposition}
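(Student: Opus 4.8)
The plan is to reduce the statement to Lemma~\ref{lemma2} by showing that, in the surface case, prepolarity of $g$ with respect to $\mathcal{T}^{f}$ is equivalent to $g$ having a stratified isolated singularity at the origin. Recall from Definition~\ref{prepolar} that $g$ is prepolar with respect to the good stratification $\mathcal{T}^{f}$ if the origin is an isolated critical point of $g$ in the stratified sense, where the stratification in question is $\mathcal{T}^{f}=\{T_{\Delta_i}\setminus X_{\sigma}^{f},\ T_{\Delta_i}\cap X_{\sigma}^{f},\ \{0\}\}_{i=0,1,2,3}$. So the heart of the argument is to compare $\Sigma_{\mathcal{T}^{f}}\,g$ with $\Sigma_{\mathcal{T}}\,g$, where $\mathcal{T}=\{T_{\Delta_0},T_{\Delta_1},T_{\Delta_2},T_{\Delta_3}\}$ is the $T$-orbit stratification appearing in Lemma~\ref{lemma2}.

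First I would observe that $\mathcal{T}^{f}$ is a refinement of $\mathcal{T}$: each orbit $T_{\Delta_i}$ is cut into $T_{\Delta_i}\setminus X_{\sigma}^{f}$ and $T_{\Delta_i}\cap X_{\sigma}^{f}$ (plus the split-off origin). Hence $\Sigma_{\mathcal{T}}\,g\subseteq \Sigma_{\mathcal{T}^{f}}\,g$ always, so prepolarity with respect to $\mathcal{T}^{f}$ certainly forces $g$ to have a stratified isolated singularity with respect to $\mathcal{T}$, and Lemma~\ref{lemma2} then yields the claimed form $g=c_1z_1^{p_1}+h+c_nz_n^{p_n}$. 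This gives the ``only if'' direction for free. For the ``if'' direction I would argue that the extra pieces introduced by the refinement do not create new critical points near $0$: on the open stratum $T_{\Delta_3}\setminus X_{\sigma}^{f}$ the critical locus of $g$ is contained in that of $g|_{T_{\Delta_3}}$, which by Lemma~\ref{lemma2} (and \cite[Lemma 78]{Oka}) is empty in a punctured neighbourhood of $0$; on each stratum $T_{\Delta_i}\cap X_{\sigma}^{f}$ ($i=1,2,3$), since $f$ and $g$ share no irreducible component and $(g,f)$ is a non-degenerate complete intersection, the hypersurface $X_{\sigma}^{f}$ meets each orbit $T_{\Delta_i}$ in a proper non-degenerate subvariety and $g$ restricted to $X_{\sigma}^{f}\cap T_{\Delta_i}$ is again non-degenerate; so by the same Oka-type transversality/non-degeneracy argument used in the proof of Lemma~\ref{lemma2} it has no critical points in a punctured neighbourhood of the origin. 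Collecting these, $\Sigma_{\mathcal{T}^{f}}\,g\cap B_\varepsilon\subseteq\{0\}$, i.e.\ $g$ is prepolar.

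The main obstacle I expect is the careful bookkeeping on the strata $T_{\Delta_i}\cap X_{\sigma}^{f}$: one must be sure that intersecting with $X_{\sigma}^{f}$ preserves non-degeneracy of $g|_{T_{\Delta_i}}$ and that the condition ``$f,g$ have no common irreducible component'' is exactly what rules out $g$ vanishing identically on a component of $X_{\sigma}^{f}\cap T_{\Delta_i}$ (which would put a whole curve into the critical locus and destroy isolation). This is where Definition~\ref{ICdegenerate}, applied to the pair $(g,f)$ together with the normal form of $\sigma$ and the explicit description of the orbits $T_{\Delta_0},\dots,T_{\Delta_3}$, does the work; the argument is a direct adaptation of the proof of Lemma~\ref{lemma2}, invoking \cite[Lemma 4.1]{MT1} and \cite[Lemma 78]{Oka} on the cut-down strata. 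Everything else — the refinement inclusion and the appeal to Lemma~\ref{lemma2} for the normal form — is routine.
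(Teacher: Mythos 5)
Your proof is correct and follows essentially the same route as the paper: both directions reduce to Lemma~\ref{lemma2}, with the hypothesis that $f$ and $g$ share no irreducible component used exactly as you say, to keep the one-dimensional strata inside $X_{\sigma}^{f}$ from contributing a curve to the critical locus. The only cosmetic difference is that for the ``only if'' direction you pass through the inclusion $\Sigma_{\mathcal{T}}\,g\subseteq\Sigma_{\mathcal{T}^{f}}\,g$, whereas the paper argues directly that $\Gamma_{+}(g)$ must meet $\Delta_{1}$ and $\Delta_{2}$; these amount to the same thing.
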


\begin{proof}
	Consider the good stratification 
	\begin{equation*}
	\mathcal{T}^f = \bigsetdef{T_{\Delta_i} \setminus X_{\sigma}^{f}, \ \ 
	T_{\Delta_i} \cap X_{\sigma}^{f}, \ \ \left\{0 \right\}}{i=0,1,2,3},
	\end{equation*}
	of $X_{\sigma}$ relative to $f$. The sets $T_{\Delta_1}$ and $T_{\Delta_2}$ 
	are given by
		\begin{equation*}
		T_{\Delta_1}  =  \bigsetdef{(t_1,0,\dots,0)}{t_1 \in \mathbb{C}^*}\ 
		\textup{and}\ 
		T_{\Delta_2}  =  \bigsetdef{ (0,\dots,0,t_1^{q} \cdot t_2^{p})}{t_1,t_2 
		\in \mathbb{C}^*}. 
		\end{equation*}
	From the fact that $f$ is a non-degenerate polynomial function, there are 
	only two possibilities for each stratum $T_{\Delta_i} 
	\cap X_{\sigma}^{f}$. Either $T_{\Delta_i} \cap X_{\sigma}^{f}$ is a finite 
	set or $T_{\Delta_i} \cap X_{\sigma}^{f} = T_{\Delta_i}$. Therefore, if $g$ 
	is prepolar with respect 
	to 									$\mathcal{T}^f$, then $\Gamma_{+}(g) 
	\cap \Delta_{1} \neq \emptyset$ and $\Gamma_{+}(g) \cap \Delta_{2} 
	\neq \emptyset$, otherwise 						$T_{\Delta_i} \cap 
	X_{\sigma}^{f} \subset \Sigma_{\mathcal{T}^f} g$ or $T_{\Delta_i} \setminus 
	X_{\sigma}^{f} \subset \Sigma_{\mathcal{T}^f} g$, for 			$i=1,2$.

	Now, suppose that $g$ have the form mentioned above, then the result 
	follows 
	from Lemma \ref{lemma2} and from the fact that, as $f$ and $g$ have no 
	irreducible components in common,\ie $X_{\sigma}^{g} \cap X_{\sigma}^{f}$ 
	is a finite set.
\end{proof}

\begin{example}
	Let $\sigma \subset \mathbb{R}^2$ be the cone ge\-ne\-ra\-ted by the 
	vectors $v_1 = e_2$ and $v_2 = ne_1 - e_2$. The toric surface associated 
	to $\sigma$ is $X_{\sigma} = V(I_{\sigma}) \subset \mathbb{C}^{n+1}$, 
	where $I_{\sigma}$ is the ideal generated by the $2 \times 2$ minors of 
	the matrix
	\begin{equation*}
	\begin{matrix}
	\begin{pmatrix}
	z_1 & z_2 & z_{3} & \dots & z_{n-1} & z_n\\
	z_{2} & z_3 & z_4 & \dots & z_n     & z_{n+1}
	\end{pmatrix}
	\end{matrix},
	\end{equation*}
	i.e., $X_{\sigma}$ is a determinantal 
	surface with codimension $n-1$. Consider $f: X_{\sigma} \to \mathbb{C}$ 
	the function given by 
	$f(z_1,\dots,z_{n+1})=z_1^d + z_{n+1}^{d} + tg(z_1,\dots,z_{n+1})$, where 
	$$
	g(z_1,\dots,z_{n+1}) = \displaystyle{\sum_{l=1}^{m} z_1^{p_1^l} 
	z_2^{p_2^l} \dots z_{n+1}^{p_{n+1}^l}}
	$$
	\noindent is a polynomial function on $X_{\sigma}$ satisfying $p_1^{l} + 
	p_2^{l} + \dots + p_{n+1}^{l} > d$  
	for every $l=1,\dots,m$. If $f$ is a 
	non-degenerate polynomial function then 
	$$
	{\rm B}_{f,X_{\sigma}}(0)= 2d -nd^2.
	$$
	\noindent Indeed, consider $h:X_{\sigma} 
	\to \mathbb{C}$ the function given by $h(z_1,\dots,z_{n+1}) = z_1^d + 
	z_{n+1}^d$. The Newton polygon $\Gamma_+(h)$ has an unique 
	$1$-dimensional compact face $\beta_1$, that is the straight line segment 
	connecting the points $(d,0)$ and $(d,nd)$ in $\check{\sigma}$. Using the same notation of Proposition \ref{propositionBrasselet}, we have that 
	$\Gamma_1^{\Delta_1}$ is the straight line segment connecting the points 
	$(0,0)$ and $(d,0)$, $\Gamma_1^{\Delta_2}$ is the straight line segment 
	connecting the points $(0,0)$ and $(d,nd)$ and $\Gamma_1^{\Delta_3}$ is 
	the triangle with vertices $(0,0)$, $(d,0)$ and $(d,nd)$. Therefore, 
	$$
	\rm{Vol_{\mathbb{Z}}}(\Gamma_{1}^{\Delta_1})=\rm{Vol_{\mathbb{Z}}}(\Gamma_{1}^{\Delta_2})
	 = d \ \ \text{and} \ \ 
	\rm{Vol_{\mathbb{Z}}}(\Gamma_{1}^{\Delta_3})=nd^2.
	$$
	By Proposition \ref{propositionBrasselet},
	$$
	{\rm 	B}_{h,X_{\sigma}}(0)=2d-nd^2,
	$$
	since $X_{\sigma}$ has an isolated singularity at the origin, and 
	consequently ${\rm 	Eu}_{X_{\sigma}}(T_{\Delta_{1}}) = 
	{\rm 													
	Eu}_{X_{\sigma}}(T_{\Delta_{2}})
	 = {\rm 	Eu}_{X_{\sigma}}(T_{\Delta_{3}}) = 1$.
	\noindent Now, $S_{\sigma}$ is the semigroup generated by 
	$$
	\left\{(1,0),(1,1),(1,2)\dots,(1,n) \right\},
	$$
	and then $\Gamma_+(g) \subset 
	\Gamma_+(h)$. Moreover, by Lemma	\ref{lemma2}, $f$ has an isolated 
	singularity at the origin, 
	thus 
	$$
	{\rm B}_{f,X_{\sigma}}(0)= {\rm Eu}_{X_{\sigma}}(0) - {\rm 
	Eu}_{f,X_{\sigma}}(0).
	$$
	\noindent However, Gonz\'alez-Sprinberg \cite{Gonzalez} proved that ${\rm 	Eu}_{X_{\sigma}}(0) = 3-(n+1)$. Hence, 
	$$
			{\rm Eu}_{f,X_{\sigma}}(0) = 3-(n+1)-2d+nd^2.
	$$
	Therefore, a morsefication of $f$ 		has $3-(n+1)-2d+nd^2$ Morse points 
	on the regular part of $X_{\sigma}$. 
\end{example}

\begin{example}\label{examplemorse} \TDchange{colocar uma familia em $g$ tambem e usar esse exemplo no final da secao 3 e tb no final da secao 5}
	Let $\sigma \subset \mathbb{R}^2$ be the cone ge\-ne\-ra\-ted by the 
	vectors $v_1 = e_2$ and $v_2 = 2e_1 - e_2$. The toric surface associated 
	to $\sigma$ is $X_{\sigma} = V(I_{\sigma}) \subset \mathbb{C}^3$, with 
	$I_{\sigma}$ the ideal generated by $z_1z_3 - z_2^{2}$. Consider $f: 
	X_{\sigma} \to \mathbb{C}$ the function given by $f(z_1,z_2,z_3)=z_2^2 - 
	z_1^3$, which is a non-degenerate polynomial function, whose singular 
	set is 
	$$
	\Sigma f = \bigsetdef{(0,0,z_3)}{z_3 \in \mathbb{C}} 
	\subset X_{\sigma}.
	$$
	Moreover, $\Gamma_+(f)$ has a unique $1$-dimensional 
	compact face $\beta_1$, which is the straight line segment connecting the 
	points $(3,0)$ and $(2,2)$ in $\check{\sigma}$. Thus, 
	$\Gamma_1^{\Delta_1}$ is the straight line segment connecting the points 
	$(0,0)$ and $(3,0)$, $\Gamma_1^{\Delta_3}$ is the triangle with vertices 
	$(0,0)$, $(3,0)$ and $(2,2)$, and $\Gamma_1^{\Delta_2} = \emptyset$. 
	Therefore, by Proposition \ref{propositionBrasselet}, 
	$$
			{\rm B}_{f,X_{\sigma}}(0)=3-6=-3
	$$
	since $X_{\sigma}$ has an isolated singularity at the origin, and 
	consequently 
	$$
				{\rm 	Eu}_{X_{\sigma}}(T_{\Delta_{1}}) = {\rm Eu}_{X_{\sigma}}(T_{\Delta_{2}}) = {\rm 	Eu}_{X_{\sigma}}(T_{\Delta_{3}}) = 1.
	$$
	Now let $g: X_{\sigma}	\to \mathbb{C}$ be the non-degenerate polynomial function given by 
	$$
				g(z_1,z_2,z_3)= z_1 - z_3^2,
	$$
	 which is prepolar 
	with respect to $\mathcal{T}^f$. Moreover, $(g,f)$ is a non-degenerated 
	complete intersection. The Newton polygon $\Gamma_{+}(g\cdot f)$ has two 
	$1$-dimensional 
	compact faces $\gamma_1$ and $\gamma_2$, which are the straight line 
	segment connecting the points $(4,0)$ and $(3,2)$ and the straight line 
	segment connecting the points $(3,2)$ and $(4,6)$, respectively. Thus, the 
	primitive vectors 
	$$u_{1}^{\Delta_3}, u_{2}^{\Delta_3}  \in {\rm 
	Int}(\check{\Delta_3}) \cap M(S_{\sigma} \cap \Delta_3)^{*}
	$$ 
	\noindent which take their minimal value in $\Gamma_{+}(g\cdot f) \cap 
	\Delta_3$ exactly on $\gamma_{1}$ 
	and $\gamma_2$, respectively, are $u_{1}^{\Delta_3}=(2,1)$ and 
	$u_{2}^{\Delta_3}=(4,-1)$. Let us observe that
	
	\begin{equation*}
	\begin{array}{lllll}
	\gamma(g)_{1}^{\Delta_3} & := & \Gamma(g|_{\Delta_3};u_{1}^{\Delta_3}) & 
	= & \left\{(1,0) \right\} \\
	
	\gamma(g)_{2}^{\Delta_3} & := & \Gamma(g|_{\Delta_3};u_{2}^{\Delta_3}) & 
	= & \alpha_1 \\
	
	d_{1}^{\Delta_3} & := & d_{2}^{\Delta_3} & := &	6 \\
	
	K_1^{\Delta_3} & = & K_2^{\Delta_3} & = & 1
	 
	\end{array},
	\end{equation*}
	
	\noindent where $\alpha_1$ is the $1$-dimensional compact face of 
	$\Gamma_{+}(g)$. Applying Theorem	\ref{BrasseletIC4}, we have 
	${\rm B}_{f,X_{\sigma}^{g}}(0)= 12$.
	Therefore, we obtain the following equality 
	$$
	{\rm B}_{f,X_{\sigma}}(0) - {\rm 
	B}_{f,X_{\sigma}^{g}}(0) = -3 - 12 =-15,
	$$
	which means that the number 
	of stratified Morse critical points on the top stratum $T_{\Delta_3} \cap 
	f^{-1}(\delta)\cap B_{\varepsilon}(0)$ appearing in a morsefication of $g : 
	X_{\sigma} \cap f^{-1}(\delta) \cap B_{\varepsilon}(0) \to \mathbb{C}$ is 
	$15$. Moreover, if we consider $h, l: X_{\sigma} \to \mathbb{C}$ the 
	polynomial functions given by 
				\begin{equation*}
					h(z_1,z_2,z_3)  =  -z_1^2 z_3^2, \qquad
					l(z_1,z_2,z_3)  =  z_3^3,
			\end{equation*}
	and	observe that 
			\begin{equation*}
					\Gamma_+(h)  \subsetneqq  \Gamma_+(f), \qquad
					\Gamma_+(l)  \subsetneqq  \Gamma_+(g),
			\end{equation*}
	by Corollary \ref{BrasseletIC2} we have 
				\begin{equation*}
					{\rm B}_{f_t,X_{\sigma}}(0) = {\rm B}_{f,X_{\sigma}}(0) 
					=  -3, \qquad
					{\rm B}_{f_t,X_{\sigma}^{g_s}}(0)  =  {\rm 
					B}_{f,X_{\sigma}^{g}}(0) =  12,
				\end{equation*}
	\noindent where $f_t(x)=f(x)+t \cdot h(x)$ is a deformation of the cusp 
	$f_0(z_1,z_2,z_3) = z_2^2 - z_1^3$ (see Figure 
	\eqref{fig:cuspidedeformation}) and $g_s(x)=g(x)+ s \cdot l(x)$. 
	Consequently,
	$$
	{\rm B}_{f_t,X_{\sigma}}(0) - {\rm 
	B}_{f_t,X_{\sigma}^{g_s}}(0) = -3 - 12 =-15,
	$$
 	for all $t, s \in \mathbb{C}$. 
\end{example}

\section{Indices of vector fields}\label{Section-IndicesVectorFields}

A toric surface $X_{\sigma}$, which is a cyclic quotient singularity,
always possesses a smoothing \cite[Satz 10]{Rie1}. Therefore, when we consider a radial continuous vector field $v$ on
$X_{\sigma}$ with an isolated singularity at $0$, we can relate the Euler 
characteristic of a fiber of this smoothing with the
$\rm{GSV}$ index of $v$ in $X_{\sigma}$. The definition of this index for 
smoothable isolated singularity can be found in \cite[Section 3]{BSS}.

In the particular, for the case of toric surfaces 
which are also isolated determinantal singularities, we have the following result concerning $\rm{GSV}$ index.

Let $X_{\sigma} \subset \mathbb{C}^n$ be a toric surface that is also an 
isolated determinantal singularity,\ie $\sigma$ is generated by the 
vectors $v_1 = p e_{1} - q e_{2}$ and $v_{2} = e_{2}$, where $0 < q < p$, $p,q$ are coprime,
and whose the Hirzebruch-Jung continued fraction is 
	$$
	\frac{p}{p-q} = [[a_2, 2, 2, \dots, 2, a_{n-1}]].
	$$
Consider $f_t(x) = f(x)+\sum_{j=1}^{r} \theta_{j}(t)\cdot h_j(x)$ a family of 
non-degenerate polynomial 
functions on $X_{\sigma}$, which satisfies the conditions
		\begin{equation*}
		\Gamma_{+}(h_j) \subsetneqq \Gamma_{+}(f), \ \ \text{for all} \ \ j=0,\dots,r.
		\end{equation*}
If this family has an isolated singularity at the origin, then the 
following result holds.

\begin{proposition}
	Let $v_t$ be the vector field given by the gradient of the function 
	$f_t$. Then, the following are equivalent:
	\item (a) ${\rm Eu}_{f_t,X_{\sigma}}(0)$ is constant for the family;
	
	\item (b) ${\rm Ind}_{GSV}(v_t,X_{\sigma},F)$ is constant for the family, 
	where $F$ is the flat map associated to the smoothing of $X_{\sigma}$.
\end{proposition}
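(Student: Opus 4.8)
The plan is to show that, for every $t$, the numbers ${\rm Ind}_{GSV}(v_t,X_{\sigma},F)$ and ${\rm Eu}_{f_t,X_{\sigma}}(0)$ differ by a quantity attached to the pair $(X_{\sigma},F)$ alone, after which the equivalence (a)$\Leftrightarrow$(b) is formal. It is worth noting first that, under the standing hypotheses, both conditions in fact hold: since $\Gamma_{+}(h_j)\subsetneqq\Gamma_{+}(f)$ we have $\Gamma_{+}(f_t)=\Gamma_{+}(f)$ for every $t$, so Proposition \ref{propositionBrasselet} gives that ${\rm B}_{f_t,X_{\sigma}}(0)={\rm B}_{f,X_{\sigma}}(0)$ does not depend on $t$, and as each $f_t$ has a stratified isolated critical point at the origin, \eqref{EqBrasEul} then forces ${\rm Eu}_{f_t,X_{\sigma}}(0)={\rm Eu}_{X_{\sigma}}(0)-{\rm B}_{f,X_{\sigma}}(0)$ to be constant. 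Thus the content of the statement is that the GSV index is governed by the same invariant.

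For the key relation I would invoke the comparison between the GSV index and the radial index of a vector field on a smoothable isolated surface singularity, as in \cite[Section 3]{BSS}: for any continuous vector field $w$ on $X_{\sigma}$ with an isolated zero at the origin,
\[
{\rm Ind}_{GSV}(w,X_{\sigma},F)={\rm Ind}_{\mathrm{Rad}}(w,X_{\sigma},0)+\big(\chi(X_{F})-1\big),
\]
where $X_{F}$ denotes the Milnor fibre of the smoothing $F$ of $X_{\sigma}$ and the sign $(-1)^{\dim X_{\sigma}}$ equals $+1$ because $\dim X_{\sigma}=2$. Here $X_{F}$ is a smoothing of the \emph{fixed} variety $X_{\sigma}$, not of $X_{\sigma}^{f_t}$, so the family $\{f_t\}$ plays no role in its construction and $\chi(X_{F})-1$ is one and the same number for every member of the family. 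Applying this with $w=v_t$ (the distinction between the holomorphic gradient and the conjugate gradient used to define ${\rm Eu}_{f_t,X_{\sigma}}(0)$ being immaterial for the constancy argument) reduces everything to showing that ${\rm Ind}_{\mathrm{Rad}}(v_t,X_{\sigma},0)$ depends on $t$ only through ${\rm Eu}_{f_t,X_{\sigma}}(0)$.

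To this end I would combine two observations. Since $f_t$ has a stratified isolated critical point at the origin, the radial index of $v_t$ on $X_{\sigma}$ is an affine combination of the Euler characteristics $\chi\big(T_{\Delta_i}\cap X_{\sigma}\cap f_t^{-1}(\delta)\cap B_{\varepsilon}\big)$ of the Milnor fibres of $f_t$ along the torus orbits $T_{\Delta_i}$, the coefficients being built only from the values ${\rm Eu}_{X_{\sigma}}(T_{\Delta_i})$ and from ${\rm Eu}_{X_{\sigma}}(0)$ (evaluated via Gonz\'alez-Sprinberg's formula \cite{Gonzalez})---all invariants of $X_{\sigma}$. On the other hand, each such Euler characteristic is determined by $\Gamma_{+}(f_t)\cap\Delta_i=\Gamma_{+}(f)\cap\Delta_i$: the contributions of the $1$-dimensional orbits $T_{\Delta_1},T_{\Delta_2}$ are already $t$-independent, while the contribution of the top orbit $T_{\Delta_3}$ is, by Definition \ref{BrasseletN} together with \eqref{EqBrasEul}, an affine function of ${\rm Eu}_{f_t,X_{\sigma}}(0)$ with $t$-independent coefficients. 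Collecting terms yields an identity of the form
\[
{\rm Ind}_{GSV}(v_t,X_{\sigma},F)=\pm\,{\rm Eu}_{f_t,X_{\sigma}}(0)+C(X_{\sigma},F),
\]
with $C(X_{\sigma},F)$ independent of $t$, and the equivalence (a)$\Leftrightarrow$(b) is immediate (the overall sign, which here is $+$, plays no role).

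The main obstacle is not a difficult computation but the bookkeeping that isolates, inside ${\rm Ind}_{GSV}(v_t,X_{\sigma},F)$, its single $t$-dependent ingredient---the Euler characteristic of the Milnor fibre of $f_t$, equivalently ${\rm Eu}_{f_t,X_{\sigma}}(0)$---and checks that every other ingredient (the smoothing fibre $X_{F}$, the Euler obstructions of $X_{\sigma}$ along its orbits, the normalisation ${\rm Eu}_{X_{\sigma}}(0)$, and the lower-orbit Milnor fibres) is rigid along the deformation. The point one must be careful about is precisely that the smoothing is of $X_{\sigma}$, so $X_{F}$ does not vary with $t$; once the $t$-dependent and $t$-independent parts have been separated, the argument is formal.
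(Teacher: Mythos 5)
Your argument is correct in substance but follows a genuinely different route from the paper's. The paper's proof is a short chain of identifications: by \cite{BBT} the determinantal Milnor number $\mu(f_t|_{X_\sigma})$ equals the number of Morse points of a morsefication of $f_t$ on a fibre of the smoothing of $X_\sigma$; by the definition of the GSV index for smoothable singularities \cite{BSS} this number is exactly ${\rm Ind}_{GSV}(v_t,X_\sigma,F)$; and the equivalence of the constancy of $\mu(f_t|_{X_\sigma})$ with that of ${\rm Eu}_{f_t,X_\sigma}(0)$ is then quoted from \cite{ABOT}. You instead establish the affine relation ${\rm Ind}_{GSV}(v_t,X_\sigma,F)=\pm\,{\rm Eu}_{f_t,X_\sigma}(0)+C(X_\sigma,F)$ directly, via the comparison of the GSV index with the radial index; in effect you reprove the \cite{ABOT} equivalence in this special case, which makes the proposition self-contained and explains \emph{why} the two constancies are linked. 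One step needs tightening: your description of ${\rm Ind}_{\rm Rad}(v_t,X_\sigma,0)$ as an affine combination of orbitwise Milnor-fibre Euler characteristics ``with coefficients built from the Euler obstructions'' is not the right formulation --- the radial index of the gradient of a function with a stratified isolated critical point is governed by the plain, unweighted Euler characteristic of its Milnor fibre, ${\rm Ind}_{\rm Rad}=\pm\bigl(1-\chi(X_\sigma\cap f_t^{-1}(\delta)\cap B_\varepsilon)\bigr)$ in the sense of Ebeling--Gusein-Zade. The argument still closes because $X_\sigma$ is a normal toric surface with at most an isolated singular point, so ${\rm Eu}_{X_\sigma}(T_{\Delta_i})=1$ for $i=1,2,3$, the Brasselet number of Definition~\ref{BrasseletN} coincides with that Euler characteristic, and \eqref{EqBrasEul} converts it into ${\rm Eu}_{X_\sigma}(0)-{\rm Eu}_{f_t,X_\sigma}(0)$; you should say this explicitly rather than appeal to a weighted sum. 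Your preliminary observation that, under the standing hypothesis $\Gamma_+(h_j)\subsetneqq\Gamma_+(f)$, condition (a) --- and hence (b) --- holds automatically by Proposition~\ref{propositionBrasselet} and \eqref{EqBrasEul} is correct and worth recording, though it is an aside: the structural identity between the two indices is what both you and the paper (via \cite{ABOT}) ultimately rely on.
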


\begin{proof}
	By \cite{BBT} the determinantal Milnor number of the 
	function $f$ on the Isolated Determinantal Singularity $X_{\sigma}$ is 
			$$
			\mu(f|_{X_{\sigma}}) = \# \Sigma(\tilde{f}|_{{X_{\sigma}}_s}),
			$$
	\noindent where ${X_{\sigma}}_s$ is a fiber of a smoothing of $X_{\sigma}$, $\tilde{f}|_{{X_{\sigma}}_s}$ is a morsefication of $f$ and $\# 
	\Sigma(\tilde{f}|_{{X_{\sigma}}_s})$ denote the number of Morse points of 
	$\tilde{f}$ on ${X_{\sigma}}_s$. From the definition of the GSV 
	index in the case of smoothable varieties (see \cite{BSS}) we have
	$$\mu(f|_{X_{\sigma}}) = {\rm Ind}_{GSV}(v,X_{\sigma},F).$$ Then the 
	proof follows by \cite{ABOT}, where it is proved that ${\rm 
	Eu}_{f_t,X_{\sigma}}(0)$ is constant for the 
	family if and only if $\mu(f_t|_{X_{\sigma}})$ is constant for the family.
\end{proof}

In \cite[Definition 2.5]{BMSS}, the authors extended the concept of $\rm{GSV}$ index and proved a L\^e-Greuel formula (see \cite[Theorem 
3.1]{BMSS}) which holds with the same hypotheses of Theorem \ref{Le Greuel}. 
However, in \cite{BMSS} the authors worked with the constructible function 
given by the characteristic function, while in \cite{NN} is considered the 
local Euler obstruction. Hence the $\rm{GSV}$ index and the Brasselet number 
are not related in general.

Assuming that $f_t$ is generically a submersion, for non-degenerate complete intersections, we have the following result.

\begin{proposition}\label{GSV} \LHchange{Troquei o corolario por proposicao.} \TDchange{ok}
	Let $S_{\sigma} = \mathbb{Z}_{+}^n$ and $X_{\sigma} = \mathbb{C}^n$ be the smooth $n$-dimensional toric variety. Let $(g,f): (X_{\sigma},0) \to 			(\mathbb{C}^2,0)$ be a non-degenerate complete intersection, such that $\mathcal{T}_g$ is a Whitney stratification of $X_{\sigma}^{g}$. If 
	$$\Bl g_s(x),f_t(x) \Br = \Bl g(x) + 
	\sum_{i=1}^{m}\xi_{i}(s)\cdot l_i(x), f(x)+\sum_{j=1}^{r} 
	\theta_{j}(t)\cdot h_j(x) \Br
	$$
	is a family of non-degenerate complete 
	intersections with $h_j$ and $l_i$ satisfying	the	condition 
	\eqref{condition}
	for all $i=1,\dots,m$ and $j=1,\dots,r$, such that $\mathcal{T}_{g_s}$ is a Whitney stratification of $X_{\sigma}^{g_{s}}$ and $g_s$ is prepolar 			with respect to $\mathcal{T}^{f_t}$ at the origin. Then, ${\rm Ind}_{GSV}(g_s, 0; f_t )$ is invariant to the family.
\end{proposition}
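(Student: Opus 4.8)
The plan is to reduce the invariance of the GSV index to invariance statements already established in Section~3, using the L\^e--Greuel type formula for the GSV index of \cite[Theorem~3.1]{BMSS}. First I would recall that, under the present hypotheses, $\mathcal{T}^{f_t}$ is a good stratification of $X_\sigma=\mathbb{C}^n$ relative to $f_t$ (this follows as in the discussion preceding Lemma~\ref{lemma2}, or directly since $\mathcal{T}$ is a Whitney stratification of $X_\sigma$ and $\mathcal{T}_{f_t}$ is Whitney because $f_t$ is non-degenerate), and that $g_s$ is prepolar with respect to it. Hence the hypotheses of \cite[Theorem~3.1]{BMSS} are met for the pair $(g_s,f_t)$ on $X_\sigma$, and that theorem expresses ${\rm Ind}_{GSV}(g_s,0;f_t)$ as a difference of Euler characteristics (equivalently, of ``characteristic-function'' Brasselet-type numbers) of the Milnor fibres of $f_t$ on $X_\sigma$ and on $X_\sigma^{g_s}$, up to a sign $(-1)^{d-1}$, exactly paralleling Theorem~\ref{Le Greuel}.

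Next I would identify the two terms in that difference with quantities controlled by Newton polygons. Since $X_\sigma=\mathbb{C}^n$, all the local Euler obstructions ${\rm Eu}_{X_\sigma}(T_\Delta)$ equal $1$, and $X_\sigma^{g_s}$ meets the strata of $\mathcal{T}^{f_t}$ transversally because $g_s$ is prepolar; thus, arguing verbatim as in the proof of Theorem~\ref{BrasseletIC4}, the number attached to $X_\sigma^{g_s}$ is
\[
\sum_{{\Gamma_{+}(f)\cap\Delta\neq\emptyset}\atop{\dim\Delta\geq 2}}
(-1)^{\dim\Delta-2}\Bl\sum_{i=1}^{\nu(\Delta)}d_i^\Delta\cdot K_i^\Delta\Br,
\]
computed from the Newton polygons of the products $(f_t)^s_\Delta=\bigl(\prod_{\Gamma_+(f_t)\cap\Delta\neq\emptyset}g_s\bigr)\cdot f_t$, while the number attached to $X_\sigma$ is given by Proposition~\ref{propositionBrasselet} in terms of the volumes ${\rm Vol}_{\mathbb{Z}}(\Gamma_i^\Delta)$ associated to $f_t$ alone.

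Then I would invoke the stability of the Newton polygons along the family. Because $l_i$ and $h_j$ satisfy condition~\eqref{condition}, one has $\Gamma_+(g_s)=\Gamma_+(g)$ and $\Gamma_+(f_t)=\Gamma_+(f)$ for all $s,t\in\mathbb{C}$, and consequently $\Gamma_+\bigl((f_t)^s_\Delta\bigr)=\Gamma_+(f_\Delta)$ for every face $\Delta$ with $\Gamma_+(f)\cap\Delta\neq\emptyset$ (this is the same computation used in Corollaries~\ref{BrasseletIC2} and \ref{BrasseletIC8}). Since all of $\nu(\Delta)$, $d_i^\Delta$, $K_i^\Delta$, $\mu(\Delta)$ and ${\rm Vol}_{\mathbb{Z}}(\Gamma_i^\Delta)$ depend only on these Newton polygons, both terms in the L\^e--Greuel expression for ${\rm Ind}_{GSV}(g_s,0;f_t)$ are independent of $(s,t)$, whence so is the index.

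The main obstacle I anticipate is a careful check that \cite[Theorem~3.1]{BMSS} genuinely applies here, i.e.\ verifying that $(g_s,f_t)$ satisfies the prepolarity and good-stratification hypotheses of that theorem for \emph{every} parameter value simultaneously, and that the ``characteristic-function'' version of the Milnor-fibre Euler characteristic appearing there coincides with the quantity $\chi\bigl(X_\sigma^{g_s}\cap T_\Delta\cap B_\varepsilon\cap f_t^{-1}(\delta)\bigr)$ summed over orbits that we can compute via \eqref{nearbycicle1}--\eqref{nearbycicle2}; once this identification is in place, the rest is an application of the already-proved invariance results. A secondary point to be handled with care is the assumption that $f_t$ is generically a submersion, which is what guarantees that the GSV index in the sense of \cite{BMSS} is the relevant invariant and that the morsefication count on the top stratum is well defined independently of $(s,t)$.
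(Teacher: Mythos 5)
Your proposal is correct and follows essentially the same route as the paper: both express ${\rm Ind}_{GSV}(g_s,0;f_t)$ as a difference of Euler characteristics of the Milnor fibres of $f_t$ on $X_\sigma$ and on $X_\sigma^{g_s}$ (the paper quotes the explicit orbit-wise formula from \cite[Section 5.2]{BMSS}, you reach the same identity via the L\^e--Greuel formula of \cite[Theorem 3.1]{BMSS} together with the fact that all Euler obstructions are $1$ on $\mathbb{C}^n$), then compute the first term by \cite[Corollary 3.5]{MT1} (equivalently Proposition \ref{propositionBrasselet}) and the second by \eqref{nearbycicle1}--\eqref{nearbycicle2}, and conclude from the stability of the Newton polygons under condition \eqref{condition}.
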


\begin{proof}
	In \cite[Section $5.2$]{BMSS}, the authors used \cite[Theorem 4.2]{NN} (considering the Euler characteristic as constructible function) to provide 		the following interpretation to the $\rm{GSV}$ index,
$$
\sum_{\Delta \prec \check{\sigma}} \big( \chi 
\big(T_{\Delta} \cap X_{\sigma} \cap B_{\varepsilon}(0) \cap f^{-1}(\delta) \big) - \chi \big(T_{\Delta} \cap X_{\sigma}^{g} \cap B_{\varepsilon}(0) \cap f^{-1}(\delta) \big) \big) = {\rm Ind}_{GSV}(g, 0; f ),
$$
	where ${\rm 
	Ind}_{GSV}(g, 0; f )$ is the GSV-index of $g$ on $X^{f}$ relative to the 
	function $f$ (see \cite[Definition 2.5]{BMSS}). Moreover, $f_t$ is a 
	family of non-degenerate polynomial functions, since $(g_s,f_t)$ 
	is 								non-degenerate complete intersections, 
	for all $s,t \in \mathbb{C}$. Then, to compute ${\rm Ind}_{GSV}(g, 0; f 
	)$ we apply \cite[Corollary 						3.5]{MT1} to the 
	first 
	term of the 			equality above and  to the 
	second term of the equality above we used \Eqref{nearbycicle1}, 
	\eqref{nearbycicle2}. Therefore, the 		result follows from the fact 
	that 
	$h_j$ 
	and $l_i$ 				satisfying	the	condition 				
	\eqref{condition}	for all $i=1,\dots,m$ and $j=1,\dots,r$.

%

\end{proof}

\begin{example} 
	Consider the toric surface $X_{\sigma} = V(I_{\sigma}) \subset \mathbb{C}^3$, with 
	$I_{\sigma}$ the ideal generated by $z_1z_3 - z_2^{2}$. Let $f_t$ and $g_s$ be the same families of functions from Example \ref{examplemorse}, 			then
				$$
				{\rm Ind}_{GSV}(g_s, 0; f_t ) = -15
				$$
	\noindent for all $t, s \in \mathbb{C}$.
\end{example}

\section*{Acknowledgments}
The authors are grateful to Nivaldo de G\'oes Grulha Jr. from ICMC-USP for helpful
conversations in developing this paper and to Bruna Or\'efice Okamoto from 
DM-UFSCar for helpful conversations about the Bruce-Roberts' Milnor number.
Through the project CAPES/PVE Grant  88881. 
068165/2014-01 of the program Science without borders, 
Professor Mauro Spreafico visited the DM-UFSCar in S\~ao Carlos providing 
useful discussions with the authors. Moreover, the
authors were partially supported by this project, therefore we are grateful 
to this program. We would like to thank the referee for 
many valuable suggestions which improved this paper.

\bibliography{DalHar}
\bibliographystyle{amsalpha-lmp}

\end{document}